\documentclass[final, twoside, a4paper]{amsart}
\usepackage[latin1]{inputenc}
\usepackage[T1]{fontenc}
\usepackage{textcomp}
\usepackage[english]{babel}
\usepackage{amsmath}
\usepackage{amsfonts}
\usepackage{amssymb}
\usepackage{amsopn}
\usepackage{amscd}
\usepackage{amsthm}
\usepackage{mathrsfs}
\usepackage{graphicx}
\usepackage{enumerate}
\theoremstyle{plain}
\newtheorem{theorem}{Theorem}[section]
\newtheorem{lemma}[theorem]{Lemma}
\newtheorem{prop}[theorem]{Proposition}
\newtheorem{corollary}[theorem]{Corollary}
\newtheorem{definition}[theorem]{Definition}
\newtheorem{example}[theorem]{Example}

\theoremstyle{definition}

\theoremstyle{remark}
\newtheorem*{remark}{Remark}




\newcommand{\R}{\mathbb{R}}

\newcommand{\N}{\mathbb{N}}

\newcommand{\Pro}{\mathcal{P}}
\newcommand{\pr}{\mathbb{P}}

\newcommand{\bt}{\mathbf{T}}

\newcommand{\eps}{\varepsilon}


\title[Simple bounds in 1-Wasserstein distance]{Simple bounds for the convergence of empirical and occupation measures in 1-Wasserstein distance}

\author{Emmanuel Boissard}

\address{Universit\'{e} Paul Sabatier, Toulouse, France}

\begin{document}

\date{\today}

\begin{abstract}
 We study the problem
 of non-asymptotic deviations between a reference measure $\mu$
and its empirical version $L_n$, in the $1$-Wasserstein metric, under the standing assumption that
$\mu$ satisfies a transport-entropy inequality. We extend some results of F. Bolley, A. Guillin and C. Villani \cite{quantit_conc_ineq}
with simple proofs. Our methods are based on concentration inequalities and extend to the general setting of measures
 on a Polish space.
Deviation bounds for the occupation measure of a contracting Markov chain in $W_1$ distance are also given.

Throughout the text, several examples are worked out, including the cases of Gaussian measures on separable Banach spaces, 
and laws of diffusion processes.

\end{abstract}

\maketitle

\tableofcontents

\section{Introduction}

\subsection{Generalities}

In the whole paper, $(E, d)$ will denote a Polish space with metric $d$, equipped with its Borel $\sigma$-field and $\Pro(E)$ will denote
the set of probability measures over $E$.
Consider $\mu \in \Pro(E)$ and 
 a sequence of i.i.d. variables $X_i$, $1 \leq i \leq n$, with common law $\mu$. Let 

\begin{equation}
L_n = \frac{1}{n} \sum_{i = 1}^{n} \delta_{X_i} 
\end{equation}

 denote the empirical measure
associated with the i.i.d. sample $(X_i)_{1 \leq i \leq n}$, then with probability 1, $L_n \rightharpoonup \mu$ as $n \rightarrow + \infty$  (here 
the arrow denotes narrow convergence, or convergence against all 
bounded continuous functions over $E$). This theorem is known as the empirical law of large number or Glivenko-Cantelli theorem and is due in this form to Varadarajan
\cite{varadarajan1958convergence}.
Quantifying the speed of convergence for an appropriate notion of distance between probability measures is an old problem, with notable importance in statistics.
For many examples, we refer to the book of Van der Vaart and Wellner \cite{van1996weak} and the Saint-Flour course of P.Massart \cite{massart1896concentration}.

Our aim here is to 
study non-asymptotic deviations in $1$-Wasserstein distance. This is a problem of interest in the fields of
statistics and numerical probability.
 More specifically, we provide bounds for the quantity
$\pr( W_1( L_n, \mu) \geq t)$
for $t > 0$, i.e. we quantify the speed of convergence of the variable $W_1( L_n, \mu)$ to $0$ in probability.

This paper seeks to complement the work of F.Bolley, A.Guillin and C.Villani in \cite{quantit_conc_ineq}
where such estimates are obtained for measures supported in $\R^d$. We sum up (part of) their result here.
 Suppose that $\mu$ is a probability measure on
$\R^d$ for $1 \leq p \leq 2$ that satisfies a $\bt_p(C)$ transportation-entropy inequality, that is

\begin{equation*}
 W_p(\nu, \mu) \leq \sqrt{C H(\nu | \mu)} \text{ for all } \nu \in \Pro_p(\R^d)
\end{equation*}

(see below for definitions).
They obtain a
non-asymptotic Gaussian deviation estimate for the $p-$Wasserstein distance between the empirical and true measures~:

\begin{equation*}
 \pr (W_p(L_n, \mu) \geq t) \leq C(t) \exp ( - K n t^2).
\end{equation*}

This is an effective result : the constants $K$ and $C(t)$ may be explicitely computed
 from the value of some square-exponential moment of $\mu$ and the constant $C$ appearing in the transportation inequality.

The strategy used in \cite{quantit_conc_ineq} relies on a non-asymptotic version of (the upper bound in) Sanov's theorem.
Roughly speaking, Sanov's theorem states that the proper rate function for the deviations of empirical measures is the entropy functional,
or in other words that for 'good' subsets $A \in \Pro(E)$,

\begin{equation*}
 \pr (L_n \in A) \asymp e^{- n H(A | \mu)}
\end{equation*}

where $H(A | \mu) = \inf_{\nu \in A} H( \nu | \mu)$ (see \cite{dembo1993large} for a full statement of the theorem).

In a companion work \cite{boissard2011bounding}, we derive sharper bounds for this problem, using a construction originally due to R.M. Dudley \cite{dudley1969speed}.
The interested reader may refer to \cite{boissard2011bounding} for a summary of existing results.
Here, our purpose is to show that in the case $p = 1$, the results of \cite{quantit_conc_ineq} can be recovered with simple
arguments of measure concentration, and to give various extensions of interest.

\begin{itemize}
 \item We would like to consider spaces more general than $\R^d$.
 \item We would like to encompass a wide class of measures in a synthetic treatment. In order to do so we will consider more general transportation inequalities, see below.
 \item Another interesting feature is to extend the result to dependent sequences such as the occupation measure of a Markov chain.
This is a particularly desirable feature in applications : one may wish to approximate a distribution that is unknown, or from which it is
practically impossible to sample uniformly, but that is known to be
the invariant measure of a simulable Markov chain.
\end{itemize}

\emph{Acknowledgements.} The author thanks his advisor Patrick Cattiaux for suggesting the problem and for his advice. Arnaud Guillin is also thanked for enriching conversations.

\bigskip

In the remainder of this section, we introduce the tools necessary in our framework : transportation distances and transportation-entropy inequalities.
In Section \ref{results}, we give our main results, as well as explicit estimates in several relevant cases. Section \ref{proof_of_main_thm} is devoted
to the proof of the main result. Section \ref{section_variant} is devoted to the proof of Theorem \ref{variant}. 
In Section \ref{dependent_case} we show how our strategy of proof can extend to the dependent case.

\subsection{A short introduction to transportation inequalities}

\subsubsection{Transportation costs and Wasserstein distances}

We recall here basic definitions and propositions ; for proofs and a thorough account of this rich theory,
 the reader may refer to \cite{optimal_transport_villani}.
 Define $\Pro_p$, $ 1 \leq p < + \infty$, as the set of probability measures with a
finite $p$-th moment. The $p$-Wasserstein metric $W_p(\mu, \nu)$ between $\mu, \nu \in \Pro_p$ is defined by

\begin{equation*}
W_p^p(\mu, \nu) =  \text{inf} \int d^p(x,y) \pi(dx, dy)
\end{equation*}

where the infimum is on probability measures $\pi \in \Pro(E \times E)$ with marginals $\mu$ and $\nu$.
The topology induced by this metric is slightly stronger than the weak topology : namely, 
convergence of a sequence $(\mu_n)_{n \in \N}$ to a measure $\mu \in \Pro_p$ in the $p$-Wasserstein metric is equivalent to
the weak convergence of the sequence plus a uniform bound on the $p$-th moments of the measures $\mu_n$, $n \in \N$.

We also recall the well-known Kantorovich-Rubinstein dual characterization of $W_1$ : let $\mathcal{F}$ denote the set of $1$-Lipschitz functions
$f : E \rightarrow \R$ that vanish at some fixed point $x_0$. We have :

\begin{equation} \label{kantorovich_rubinstein}
 W_1(\mu, \nu) = \inf_{f \in \mathcal{F}} \int f d \mu - \int f d \nu.
\end{equation}

\subsubsection{Transportation-entropy inequalities}

For a very complete overview of the subject, the reader is invited to consult the review \cite{gozlan2010transport}.
More facts and criteria are gathered in Appendix \ref{appendix_transport_inequalities}
For $\mu, \nu \in \Pro(E)$, define the relative entropy $H(\nu | \mu)$ as

\begin{equation*}
 H( \nu | \mu) = \int_E \log \frac{d \nu}{d \mu} \nu( d x)
\end{equation*}

if $\nu$ is absolutely continuous relatively to $\mu$, and $H( \nu | \mu) = + \infty$ otherwise.
Let $\alpha : [0, \ + \infty) \rightarrow \R$ denote a convex, increasing, left-continous function such that $\alpha(0) = 0$.

\begin{definition}

We say that $\mu \in \Pro_p(E)$ satisfies a $\bt_p(C)$ inequality 
for some $C > 0$ if
for all $\nu \in \Pro_p(E)$,

\begin{equation}
 W_p(\mu, \nu) \leq \sqrt{C H( \nu | \mu) }.
\end{equation}

 We say that $\mu \in \Pro(E)$ satisfies a  $\alpha(\mathcal{T}_d)$ inequality if for all $\nu \in \Pro(E)$,

\begin{equation} \label{t_c_eq}
 \alpha( W_1(\mu, \nu)) \leq H(\nu | \mu).
\end{equation}

\end{definition}

 Observe that $\bt_1(C)$ inequalities are particular cases of $\alpha(\mathcal{T}_d)$ inequalities with
 $\alpha(t) = \frac{1}{C} t^{2/p}$.
From here on, our focus will be on $\alpha(\mathcal{T}_d)$ inequalities.

\section{Results and applications} \label{results}

\subsection{General bounds in the independent case}

Let us first introduce some notation :
if $K \subset E$ is compact and $x_0 \in K$, we define the set $\mathcal{F}_K$ of $1$-Lipschitz functions over $K$ vanishing at $x_0$, which is
is also compact w.r.t. the uniform distance (as a consequence of the Ascoli-Arzela theorem). We will also need the following definition~:

\begin{definition}
 Let $(A, d)$ be a totally bounded metric space. For every $\delta > 0$, define the \textit{covering number}
$\mathcal{N}(A, \delta)$  of order $\delta$ for $A$
as the minimal number of balls of radius $\delta$ needed to cover $A$.
\end{definition}

We state our first result in a fairly general fashion.

\begin{theorem} \label{main_thm}
 Suppose that $\mu \in \Pro(E)$ satisfies a $\alpha(\mathcal{T}_d)$ inequality. Let $a > 0$ be such that
$E_{a, 1} = \int e^{a d(x_0, x)} \mu (dx) \leq 2$.
Choose a compact $K \subset E$ such that

\begin{equation*}
\mu(K^c) \leq \left[ \frac{32}{a t} \log \frac{32}{a t} - \frac{32}{a t} + 1 \right]^{-1}.
\end{equation*}

Denote 

\begin{equation}
\mathcal{C}_t = \mathcal{N}(\mathcal{F}_K, t/8). 
\end{equation}
 
We have

\begin{equation} \label{main_eq_bound}
\pr( W_1(L_n, \mu) \geq  t) \leq \exp -n \alpha \left[ t/2 -\Gamma(\mathcal{C}_t, n) \right]
\end{equation}

where $\Gamma(\mathcal{C}_t, n) = \inf_{\lambda > 0} 1/\lambda[ \log \mathcal{C}_t + n \alpha^\circledast (\lambda/n) ]$,
and with the convention that $\alpha(x) = 0$ for $x < 0$.

\end{theorem}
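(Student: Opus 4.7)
My plan is to use Kantorovich--Rubinstein duality to reduce $W_1(L_n, \mu)$ to a supremum of centred empirical averages over the $1$-Lipschitz class, discretise that supremum by a covering argument, and bound each individual deviation through the Laplace-transform (Bobkov--G\"otze/Gozlan) consequence of the $\alpha(\mathcal{T}_d)$ inequality.

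\emph{Step 1: duality, truncation, discretisation.} Write $W_1(L_n, \mu) = \sup_{f \in \mathcal{F}} \int f\,d(L_n - \mu)$ via \eqref{kantorovich_rubinstein}. Cover $\mathcal{F}_K$ by $\mathcal{C}_t = \mathcal{N}(\mathcal{F}_K, t/8)$ uniform-norm balls with centres $f_1, \ldots, f_{\mathcal{C}_t}$, and extend each to $\tilde f_i \in \mathcal{F}$ by the McShane formula. The pointwise bound $|f|, |\tilde f_i| \le d(x_0, \cdot)$ controls the contributions of $K^c$, yielding the deterministic inequality
\begin{equation*}
W_1(L_n, \mu) \le \max_{1 \le i \le \mathcal{C}_t} \bigl[L_n(\tilde f_i) - \mu(\tilde f_i)\bigr] + \frac{t}{4} + 2 R_n, \qquad R_n := \int_{K^c} d(x_0, \cdot)\, d(L_n + \mu).
\end{equation*}
In particular $\{W_1 \ge t\} \subset \{R_n > t/8\} \cup \{\max_i L_n(\tilde f_i) - \mu(\tilde f_i) \ge t/2\}$.

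\emph{Step 2: the main term.} Convex duality turns the $\alpha(\mathcal{T}_d)$ hypothesis into $\log \int e^{\lambda(g - \mu(g))}\,d\mu \le \alpha^\circledast(\lambda)$ for every $g \in \mathcal{F}$ and every $\lambda \ge 0$. Tensorising over the i.i.d.\ sample and applying Chernoff gives $\pr(L_n(\tilde f_i) - \mu(\tilde f_i) \ge s) \le e^{-n\alpha(s)}$, and a union bound yields
\begin{equation*}
\pr\bigl(\max_i L_n(\tilde f_i) - \mu(\tilde f_i) \ge t/2\bigr) \le \mathcal{C}_t\, e^{-n\alpha(t/2)}.
\end{equation*}
The infimum defining $\Gamma$ is reached at $\lambda^\star = n \alpha'(\Gamma)$, where the Legendre identity $\alpha(\Gamma) = \alpha'(\Gamma)\Gamma - \alpha^\circledast(\alpha'(\Gamma))$ collapses to $\log \mathcal{C}_t = n \alpha(\Gamma)$. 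Combined with the superadditivity $\alpha(a) + \alpha(b) \le \alpha(a+b)$ (which holds for convex $\alpha$ with $\alpha(0) = 0$ since $\alpha(x)/x$ is non-decreasing), this rewrites $\mathcal{C}_t e^{-n\alpha(t/2)} = e^{-n[\alpha(t/2) - \alpha(\Gamma)]} \le e^{-n\alpha(t/2 - \Gamma)}$, which is exactly the announced bound (the case $t/2 < \Gamma$ being trivial by the convention $\alpha(x) = 0$ for $x < 0$).

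\emph{Step 3: truncation and main obstacle.} It remains to dominate $\pr(R_n > t/8)$ at the same scale $e^{-n\alpha(t/2 - \Gamma)}$. The deterministic part $\int_{K^c} d(x_0, \cdot)\,d\mu$ is handled through $E_{a,1} \le 2$ and Jensen's inequality for $y \mapsto e^{ay}$; the curious form $[32/(at)\log(32/(at)) - 32/(at) + 1]^{-1}$ in the hypothesis is the reciprocal of the Legendre-type function $s \log s - s + 1$, tailored so that the deterministic contribution sits below $t/16$. The random part $\frac{1}{n}\sum d(x_0, X_i)\mathbf{1}_{X_i \notin K}$ is then concentrated above its mean by applying the same Laplace argument to a $1$-Lipschitz proxy of $d(x_0,\cdot)$ vanishing on $K$. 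The algebraically satisfying piece of the proof is Step~2, which hinges on the identity $\log \mathcal{C}_t = n\alpha(\Gamma)$ and the superadditivity of $\alpha$; the most delicate piece is this truncation, where the three scales $t/8$ (cover), $a$ (exponential moment) and $t/2 - \Gamma$ (target exponent) must be reconciled through the precise calibration of $\mu(K^c)$ encoded in the hypothesis.
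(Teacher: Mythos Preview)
Your Step~2 is correct and rather elegant: the identity $\log \mathcal{C}_t = n\alpha(\Gamma)$ (which follows from the Young--Fenchel relation $\alpha^{-1}(c)=\inf_{\mu>0}(c+\alpha^\circledast(\mu))/\mu$) together with superadditivity of $\alpha$ does convert the union bound $\mathcal{C}_t e^{-n\alpha(t/2)}$ into $e^{-n\alpha(t/2-\Gamma)}$. That part is a genuine alternative to the paper's soft-max argument.

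The gap is in Step~3. There is no $1$-Lipschitz function vanishing on $K$ that dominates $d(x_0,\cdot)\mathbf{1}_{K^c}$: any such function is bounded by $d(\cdot,K)$, which on $K^c$ can be arbitrarily smaller than $d(x_0,\cdot)$ (think of $K$ a ball of radius $R$ around $x_0$ and $x$ just outside). So the ``Lipschitz proxy'' does not exist, and the transport inequality gives you no handle on the fluctuations of $R_n$. One can get \emph{some} tail bound on $R_n$ from the exponential moment $E_{a,1}\le 2$ alone, but its rate has no reason to match $e^{-n\alpha(t/2-\Gamma)}$, and in any case your union $\{R_n>t/8\}\cup\{\max_i\ldots\ge t/2\}$ would then produce an additive term (or at best a factor $2$) that the theorem does not allow.

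The paper sidesteps this entirely by reversing the order of operations. It first applies the tensorised transport inequality to the function $(x_1,\ldots,x_n)\mapsto W_1(L_n^x,\mu)$ itself, which is $1/n$-Lipschitz for $d^{\oplus n}$; this yields directly
\[
\pr\bigl(W_1(L_n,\mu)\ge \mathbb{E}[W_1(L_n,\mu)]+s\bigr)\le e^{-n\alpha(s)}.
\]
Only then does it bound $\mathbb{E}[W_1(L_n,\mu)]$, and at that stage the truncation term enters solely through its \emph{expectation} $\mathbb{E}[R_n]=2\int_{K^c}d(x_0,\cdot)\,d\mu$, which is controlled by the Orlicz--H\"older inequality and the calibration of $\mu(K^c)$. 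The maximum over the covering is handled in expectation via the soft-max inequality $\mathbb{E}[\max_j\Psi(f_j)]\le\lambda^{-1}\log\sum_j\mathbb{E}e^{\lambda\Psi(f_j)}$, which produces exactly $\Gamma(\mathcal{C}_t,n)$. No probabilistic control of $R_n$ is ever needed, and no spurious constant appears.

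In short: concentrate first, then discretise the \emph{mean} --- not the event.
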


\begin{remark}
 With a mild change in the proof, one may replace in (\ref{main_eq_bound}) the term $t/2$ by $ct$ for any $c < 1$,
with the trade-off of choosing a larger compact set, and thus a larger value of $\mathcal{C}_t$. For the sake of readability we do not
make further mention of this.
\end{remark}

The result in its general form is abtruse, but it yields interesting results as soon as one knows more about $\alpha$. Let us give a few examples.

\begin{corollary} \label{corollary_bt}
 If $\mu$ satisfies $\bt_1(C)$, we have

\begin{equation*}
 \pr( W_1(L_n, \mu) \geq  t) \leq \mathcal{C}_t \exp - \frac{1}{8 C} n t^2.
\end{equation*}

\end{corollary}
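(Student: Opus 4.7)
The plan is to specialize Theorem \ref{main_thm} to the particular function $\alpha(x) = x^2/C$, since a $\bt_1(C)$ transportation-entropy inequality is exactly an $\alpha(\mathcal{T}_d)$ inequality with this choice (extended by $0$ for $x < 0$ as per the convention). The proof then reduces to a computation of the Legendre-type transform and of the infimum defining $\Gamma(\mathcal{C}_t, n)$, followed by an elementary manipulation to put the bound into the advertised product form.

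Concretely, I would first compute $\alpha^\circledast(y) = \sup_{x \geq 0}(xy - x^2/C) = Cy^2/4$, so that $n \alpha^\circledast(\lambda/n) = C\lambda^2/(4n)$. Substituting into the definition of $\Gamma$ gives
\begin{equation*}
\Gamma(\mathcal{C}_t, n) \;=\; \inf_{\lambda > 0} \left[ \frac{\log \mathcal{C}_t}{\lambda} + \frac{C \lambda}{4n} \right],
\end{equation*}
whose minimum, by calculus or AM--GM, is attained at $\lambda^* = 2\sqrt{n \log \mathcal{C}_t / C}$ with value $\Gamma(\mathcal{C}_t, n) = \sqrt{C \log \mathcal{C}_t / n}$. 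Plugging this into the conclusion of Theorem \ref{main_thm} yields
\begin{equation*}
\pr(W_1(L_n, \mu) \geq t) \;\leq\; \exp\!\left( - \frac{n}{C} \left( \frac{t}{2} - \sqrt{\tfrac{C \log \mathcal{C}_t}{n}} \right)^{\!2} \right).
\end{equation*}

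To obtain the stated form, I would apply the elementary inequality $(a - b)^2 \geq a^2/2 - b^2$ (which is just $(a - 2b)^2 \geq 0$) with $a = t/2$ and $b = \sqrt{C \log \mathcal{C}_t / n}$. This gives
\begin{equation*}
\frac{n}{C}\left(\frac{t}{2} - \sqrt{\tfrac{C \log \mathcal{C}_t}{n}}\right)^{\!2} \;\geq\; \frac{n t^2}{8 C} - \log \mathcal{C}_t,
\end{equation*}
and exponentiating the negative of both sides produces exactly the bound $\mathcal{C}_t \exp(- n t^2/(8C))$. One should also verify that the argument $t/2 - \Gamma$ of $\alpha$ is nonnegative in the regime where the bound is nontrivial; otherwise $\alpha = 0$ by convention and the probability bound is trivially $1$, which is still dominated by the prefactor $\mathcal{C}_t \geq 1$.

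The only mildly subtle point is the last step: the Legendre transform naturally produces the sharp Gaussian exponent $n(t/2 - \Gamma)^2/C$, but this mixes $t$ and the covering-number correction inside the square. The trick of splitting the square via $(a-b)^2 \geq a^2/2 - b^2$ is what decouples them and lets the $\log \mathcal{C}_t$ term be pulled out as a multiplicative prefactor, at the (mild) cost of degrading the exponent from $nt^2/(4C)$ to $nt^2/(8C)$.
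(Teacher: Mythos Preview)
Your proof is correct and follows essentially the same route as the paper's own argument: compute $\alpha^\circledast$, evaluate $\Gamma(\mathcal{C}_t,n)=\sqrt{C\log\mathcal{C}_t/n}$, plug into Theorem~\ref{main_thm}, and split the square via $(a-b)^2\geq a^2/2-b^2$. Your added remark about the trivial regime $t/2-\Gamma<0$ is a nice touch that the paper leaves implicit.
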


\begin{corollary} \label{corollary_2}
 Suppose that $\mu$ verifies the modified transport inequality

\begin{equation*}
 W_1(\nu, \mu) \leq C \left( H(\nu | \mu) + \sqrt{H(\nu | \mu)} \right)
\end{equation*}

(as observed in paragraph \ref{integral_criteria}, this is equivalent to the finiteness of an exponential moment for $\mu$). Then, for $t \leq C/2$,

\begin{equation*}
\pr( W_1(L_n, \mu) \geq  t) \leq A(n, t) \exp - \frac{(\sqrt{2} - 1)^2}{2 C^2} n t^2
\end{equation*}

where

\begin{equation*}
 A(n, t) = \exp  \displaystyle \left[ 4 (\sqrt{2} - 1)^2 n (\sqrt{1 + \frac{n}{\log \mathcal{C}_t}} -1)^{-2} \right]
\end{equation*}

(observe that $A(n, t) \rightarrow \mathcal{C}_t$ when $n \rightarrow + \infty$).

\end{corollary}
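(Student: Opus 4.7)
My plan is to apply Theorem~\ref{main_thm} with the function $\alpha$ attached to the modified transport inequality, then to control the resulting quantities by two elementary inequalities and a judicious choice of parameter. First I would invert the map $s \mapsto C(s + \sqrt{s})$ to identify $\alpha$: setting $u = \sqrt{s}$, the equation $r = C(u^{2} + u)$ has positive root $u = (\sqrt{1 + 4r/C} - 1)/2$, so that
\[
\alpha(r) = \tfrac{1}{4}\bigl(\sqrt{1 + 4r/C} - 1\bigr)^{2}.
\]
A short calculation (differentiate $\lambda r - \alpha(r)$, solve for $r$, substitute back) yields the Legendre--Fenchel conjugate
\[
\alpha^\circledast(\lambda) = \frac{C^{2} \lambda^{2}}{4(1 - C\lambda)}, \qquad 0 \le \lambda < 1/C,
\]
which is the only input needed to instantiate Theorem~\ref{main_thm}.

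Next I would produce a tractable upper bound for $\Gamma(\mathcal{C}_t, n)$ without computing the infimum exactly. I would simply plug in the specific $\lambda^{*}$ for which the two summands inside the infimum coincide, namely $\log \mathcal{C}_t = n\alpha^\circledast(\lambda^{*}/n)$. This condition is a quadratic in $C\lambda^{*}/n$ whose positive root is $C\lambda^{*} = 2\log\mathcal{C}_t\bigl(\sqrt{1+n/\log\mathcal{C}_t}-1\bigr)$. Evaluating the expression inside the infimum at this $\lambda^{*}$ gives
\[
\Gamma(\mathcal{C}_t, n) \le \frac{2\log\mathcal{C}_t}{\lambda^{*}} = \frac{C}{\sqrt{1 + n/\log\mathcal{C}_t} - 1},
\]
which is precisely the quantity whose square produces the prefactor $A(n,t)$; the choice of $\lambda^{*}$ is reverse-engineered from the announced form.

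Finally I would bound $\alpha$ from below near the origin. The identity $\sqrt{1 + x} - 1 = x/(\sqrt{1+x}+1)$ yields $\sqrt{1+x} - 1 \ge (\sqrt{2} - 1)x$ for $x \in [0,1]$, so squaring and dividing by four,
\[
\alpha(r) \ge \frac{4(\sqrt{2}-1)^{2} r^{2}}{C^{2}} \qquad \text{for } 0 \le r \le C/4.
\]
The hypothesis $t \le C/2$ forces $r = t/2 - \Gamma \le C/4$, so this estimate applies to the argument $t/2 - \Gamma$. Combining it with the Young inequality $(a-b)^{2} \ge a^{2}/2 - b^{2}$ used at $a = t/2$ and $b = \Gamma$,
\[
n\alpha(t/2 - \Gamma) \ge \frac{(\sqrt{2}-1)^{2} n t^{2}}{2C^{2}} - \frac{4(\sqrt{2}-1)^{2} n \Gamma^{2}}{C^{2}}.
\]
Inserting the bound on $\Gamma^{2}$ into the last term and exponentiating the conclusion of Theorem~\ref{main_thm} then yields exactly the stated estimate. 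The main obstacle is engineering the value of $\lambda^{*}$ so as to reproduce the precise form of $A(n,t)$; once that bookkeeping is done, the rest is the two elementary inequalities above.
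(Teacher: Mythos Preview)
Your argument is correct and follows essentially the same route as the paper: identify $\alpha(r)=\tfrac14(\sqrt{1+4r/C}-1)^2$, bound $\Gamma(\mathcal{C}_t,n)$ by $C/(\sqrt{1+n/\log\mathcal{C}_t}-1)$, use the concavity estimate $\sqrt{1+u}-1\ge(\sqrt2-1)u$ on $[0,1]$ to linearize $\alpha$, and finish with $(a-b)^2\ge a^2/2-b^2$. Your explicit computation of $\alpha^\circledast(\lambda)=C^2\lambda^2/(4(1-C\lambda))$ and the choice of $\lambda^*$ balancing the two terms fill in the step the paper leaves as ``one can get the bound''; otherwise the proofs are identical.
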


\begin{proof}[Proof of Corollary \ref{corollary_bt}]
 In this case, we have $\alpha(t) = \frac{1}{C}t^2$, and so 

\begin{equation*}
\Gamma(\mathcal{C}_t, n) = \sqrt{\frac{C \log \mathcal{C}_t}{n}}, 
\end{equation*} 

so that we get

\begin{equation*}
 \pr( W_1(L_n, \mu) \geq  t) \leq \exp -\frac{n}{C} (\frac{t}{2} - \sqrt{\frac{C \log \mathcal{C}_t}{n}})^2
\end{equation*}

and conclude with the elementary inequality $(a - b)^2 \geq \frac{1}{2}a^2 - b^2$.

\end{proof}

\begin{proof}[Proof of Corollary \ref{corollary_2}]
 
Here, $\alpha(x) = \frac{1}{4}( \sqrt{1 + \frac{4 x}{C}} - 1)^2$, and one can get the bound 

\begin{equation*}
\Gamma(\mathcal{C}_t, N) \leq \frac{C}{\sqrt{1 + \frac{N}{\log \mathcal{C}_t}} - 1}.
\end{equation*}

By concavity of the square root function, for $u \leq 1$, we have $\sqrt{1 + u} - 1 \geq (\sqrt{2}-1)u$. Thus, for $t \leq \frac{C}{2}$, we have

\begin{eqnarray*}
 \alpha( \frac{t}{2} - \Gamma(\mathcal{C}_t, N) ) & \geq & \frac{(\sqrt{2} - 1)^2}{4} ( \frac{2}{C}t - \frac{4}{\sqrt{1 + \frac{N}{\log \mathcal{C}_t}} -1})^2 \\
{} & \geq & \frac{(\sqrt{2} - 1)^2}{2 C^2} t^2 - 4 (\sqrt{2} - 1)^2(\sqrt{1 + \frac{N}{\log \mathcal{C}_t}} -1)^{-2}
\end{eqnarray*}

(in the last line we have used again the inequality $(a-b)^2 \geq \frac{a^2}{2} - b^2$). This in turn gives the announced result.

\end{proof}

Our technique of proof, though related to the one in \cite{quantit_conc_ineq},
is based on different arguments : we make use of   
the tensorization properties of transportation inequalities
as well as the estimates (\ref{cond_laplace}) in the spirit of Bobkov-G\"{o}tze, 
instead of a Sanov-type bound. 
The notion that is key here is the phenomenon of concentration of measure (see e.g. \cite{ledoux2001concentration}) : its relevance in statistics was put forth
very explicitely in \cite{massart1896concentration}. 
We may sum up our approach as follows :
first, we rely on existing tensorization results to obtain concentration of $W_1(L_n, \mu)$
around its mean $\mathbb{E}[W_1(L_n, \mu)]$, and in a second time we estimate the decay of
the mean as $n \rightarrow + \infty$. Despite technical difficulties, the arguments are mostly elementary.

The next theorem is a variation on Corollary \ref{corollary_bt}. Its proof is based on different arguments, and it is postponed to Section \ref{section_variant}.
We will use this theorem to obtain bounds for Gaussian measures in Theorem \ref{variant}.

\begin{theorem} \label{variant}
 
Let $\mu \in \Pro(E)$ satisfy a $\bt_1(C)$ inequality. Then :

\begin{equation*}
 \pr( W_1( \mu, L_n) \geq t) \leq K_t e^{-nt^2/8C}
\end{equation*}

where

\begin{equation*}
 K_t =  \exp \left[ \frac{1}{C} \inf_\nu \text{Card }(\text{Supp } \nu) (\text{Diam Supp } \nu)^2 \right]
\end{equation*}

and $\nu$ runs over all probability measures with finite support such that $W_1( \mu, \nu ) \leq t/4$.
\end{theorem}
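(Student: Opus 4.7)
The strategy is to reduce the problem to a finite-dimensional concentration estimate by replacing $\mu$ with the finitely supported $\nu$ and exploiting the Bobkov-G\"{o}tze formulation of $\bt_1(C)$.

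First, fix a finitely supported $\nu = \sum_{j=1}^N q_j \delta_{y_j}$ with $W_1(\mu, \nu) \leq t/4$; write $D = \mathrm{Diam}(\mathrm{supp}\,\nu)$. For each $a \in A := \{a \in \R^N : a_1 = 0,\ |a_j - a_k| \leq d(y_j, y_k)\}$ (a compact subset of $\R^{N-1}$ of $\ell^\infty$-diameter at most $2D$), the McShane-Whitney extension $\bar f_a(x) = \min_{j} \bigl(a_j + d(x, y_j)\bigr)$ is $1$-Lipschitz on $E$. By (\ref{kantorovich_rubinstein}), and since every $1$-Lipschitz $f$ satisfies $f \leq \bar f_{f|_{\mathrm{supp}\,\nu}}$ with equality on $\mathrm{supp}\,\nu$, the quantity $W_1(L_n, \nu)$ equals the supremum of $\frac{1}{n}\sum_i \bar f_a(X_i) - \sum_j q_j a_j$ over $a \in A$. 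Combining with the triangle inequality and $W_1(\mu, \nu) \leq t/4$, the event $\{W_1(L_n, \mu) \geq t\}$ is contained in $\{\sup_{a \in A} G_a \geq t/2\}$, where $G_a := \frac{1}{n}\sum_i \bar f_a(X_i) - \int \bar f_a\,d\mu$.

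Next, each $\bar f_a$ is $1$-Lipschitz on $E$, so the Bobkov-G\"{o}tze characterization of $\bt_1(C)$ together with tensorization gives $\mathbb{E}[e^{\lambda G_a}] \leq \exp(C\lambda^2/(2n))$ for every $\lambda \geq 0$ and every $a \in A$; moreover $|G_a - G_{a'}| \leq 2\|a - a'\|_\infty$. A $\delta$-net on $A$ of cardinality at most $(2D/\delta)^N$, combined with Chernoff's inequality and a union bound, yields
\begin{equation*}
\pr\bigl(\sup_{a \in A} G_a \geq t/2\bigr) \leq (2D/\delta)^N \exp\bigl(-n(t/2 - 2\delta)^2/(2C)\bigr).
\end{equation*}

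The main obstacle is to produce the precise multiplicative constant $K_t = \exp(ND^2/C)$: a plain single-scale union bound only gives a polynomial factor $(D/\delta)^N$. My expectation is that the proof uses either a multi-scale chaining across $A$, or controls $\mathbb{E}[\sup_a G_a]$ by a Dudley-type estimate of order $D\sqrt{CN/n}$ and then concentrates $\sup_a G_a$ around its mean, using that $\sup_a G_a$ is $1/n$-Lipschitz on $E^n$ so that $\bt_1(C)$-tensorization applies and yields Gaussian deviations $e^{-ns^2/(2C)}$. The $\sqrt n$-growing cross-term between the mean bound and the deviation would then have to be absorbed by a Young-type inequality into the clean $n$-free constant $\exp(ND^2/C)$; balancing these terms to match the $1/8$ in the rate exponent is the delicate accounting step of the argument.
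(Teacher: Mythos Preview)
Your overall architecture---concentrate a $1/n$-Lipschitz functional on $E^n$ around its mean via the tensorised $\bt_1(C)$ inequality, then bound the mean---is exactly the paper's. Where you diverge is in the second step, and the paper's route is both different and simpler than either of the two options you sketch.

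The paper never parametrises Lipschitz functions over $\mathrm{supp}\,\nu$ and uses no chaining or net. Instead it bounds $\mathbb{E}W_1(L_n,\mu)$ by a coupling argument (Lemma~\ref{lemma_alternative}): take an optimal coupling $\pi$ of $\mu$ and $\nu$, draw i.i.d.\ pairs $(X_i,Y_i)\sim\pi$, and set $L_n^k=\frac1n\sum_i\delta_{Y_i}$. The triangle inequality gives
\[
\mathbb{E}W_1(L_n,\mu)\le 2W_1(\mu,\nu)+\mathbb{E}W_1(L_n^k,\nu).
\]
The remaining term is the expected Wasserstein distance between a measure on $N$ points and its own empirical version; the paper bounds it by the total-variation-type estimate $W_1(L_n^k,\nu)\le D\bigl(1-\sum_j \nu(y_j)\wedge L_n^k(y_j)\bigr)$ (Lemma~\ref{lemma_measures_finite_support}), followed by a one-line binomial variance computation plus Cauchy--Schwarz, yielding exactly $\mathbb{E}W_1(L_n^k,\nu)\le D\sqrt{N/n}$. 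With $W_1(\mu,\nu)\le t/4$ this gives $\mathbb{E}W_1(L_n,\mu)\le t/2+D\sqrt{N/n}$, and plugging into the concentration bound together with $(a-b)^2\ge a^2/2-b^2$ produces precisely $K_t=\exp(ND^2/C)$ and the rate $e^{-nt^2/8C}$.

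Your Dudley route over the set $A$ is viable, but at best it gives $\mathbb{E}[\sup_a G_a]\le c'D\sqrt{N/n}$ for some universal $c'>1$ (using the bounded-increment, Hoeffding-type subgaussianity of $G_a-G_{a'}$; if you instead feed the $\bt_1(C)$ bound into the chaining, as your stated order $D\sqrt{CN/n}$ suggests, you pick up an extra $\sqrt{C}$ and the final prefactor loses the $1/C$ altogether). Either way you end up with $K_t=\exp(c\,ND^2/C)$ or $\exp(c\,ND^2)$ for some absolute $c$, which is the right shape but not the stated constant. The paper's coupling argument is more elementary---no entropy integral, no net---and hits the constant on the nose because the bound $D\sqrt{N/n}$ on $\mathbb{E}W_1(L_n^k,\nu)$ has leading coefficient $1$ and is oblivious to $C$.
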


\begin{remark}
 As earlier, we could improve the factor $1/8C$ in the statement above to any constant $c < 1/C$, with the trade-off of a larger constant $K_t$.
\end{remark}

\subsection{Comments}

We give some comments on the pertinence of the results above. First of all, we argue that the asymptotic order of magnitude
of our estimates is the correct one. The term ``asymptotic'' here means
that we consider the regime $n \rightarrow + \infty$, and the relevant tool in this setting is Sanov's large deviation principle
for empirical measures.
A technical point needs to be stressed : there are several variations of Sanov's theorem, and the most common ones (see e.g. \cite{dembo1993large})
deal with the weak topology on probability measures. What we require is a version of the principle that holds for
 the stronger topology induced by the $1$-Wasserstein metric, which leads to 
slightly more stringent 
assumptions on the measure than in Theorem \ref{main_thm}. With this in mind, we quote the following result from Wang \cite{wang2010sanov} :

\begin{prop}
 
Suppose that $\mu \in \Pro(E)$ satisfies 
$\int e^{a d(x, x_0)} \mu(dx) < + \infty$
 for all  $a > 0$ and some $x_0 \in E$,
and  a $\alpha(\mathcal{T}_d)$ inequality.
Then :

\begin{itemize}
 \item for all $A \subset \Pro(E)$ closed for the $W_1$ topology,
\begin{equation*}
 \limsup_{n \rightarrow + \infty} \frac{1}{n} \log \mu(A) \leq - \inf_{ \nu \in A} H(\nu | \mu)
\end{equation*}
\item for all $B \subset \Pro(E)$ open w.r.t. the $W_1$ topology,
\begin{equation*}
 \liminf_{n \rightarrow + \infty} \frac{1}{n} \log \mu(B) \geq - \inf_{\nu \in B} H(\nu | \mu).
\end{equation*}

\end{itemize}
\end{prop}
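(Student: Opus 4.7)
The two inequalities require different ingredients, so I would treat them separately (and read the $\log \mu(A)$ in the statement as $\log \pr(L_n \in A)$).

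\textbf{Lower bound.} This is the classical change-of-measure argument and does not use the $\alpha(\mathcal{T}_d)$ inequality. For $\nu \in B$ with $H(\nu|\mu) < \infty$, the entropic duality $\int f d\nu \leq \log \int e^f d\mu + H(\nu|\mu)$ applied with $f(x) = a d(x, x_0)$ shows $\nu \in \Pro_1(E)$; hence under $\nu^{\otimes n}$ the empirical measure converges a.s.\ to $\nu$ in $W_1$ (Varadarajan's theorem for narrow convergence, plus the usual strong law applied to $d(X_i, x_0)$ to secure uniform $1$-integrability). Writing
\begin{equation*}
\pr(L_n \in B) = \mathbb{E}_{\nu^{\otimes n}} \left[ \mathbf{1}_{L_n \in B} \exp\left( - \sum_{i=1}^n \log \frac{d\nu}{d\mu}(X_i) \right) \right]
\end{equation*}
and restricting to the event $\{\frac{1}{n} \sum \log \frac{d\nu}{d\mu}(X_i) \leq H(\nu|\mu) + \delta\}$, whose $\nu^{\otimes n}$-probability tends to $1$ by the law of large numbers, yields $\liminf_n \frac{1}{n}\log \pr(L_n \in B) \geq -H(\nu|\mu) - \delta$; let $\delta \to 0$ and take the infimum over $\nu \in B$.

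\textbf{Upper bound.} The plan is to reduce to the classical weak-topology Sanov theorem via exponential tightness in $W_1$. In a first step I would produce $W_1$-compact exhaustion sets $K_L$ of the form
\begin{equation*}
K_L = \left\{ \nu \in \Pro(E) : \int e^{a_k d(x, x_0)} \nu(dx) \leq M_{k, L} \text{ for every } k \geq 1 \right\}
\end{equation*}
for a sequence $a_k \uparrow \infty$ and constants $M_{k, L}$. Any such set is narrowly tight and uniformly $1$-integrable, hence $W_1$-compact. The tail estimate $\pr(L_n \notin K_L) \leq e^{-nL}$ is obtained by a union bound over $k$ and i.i.d.\ Cram\'er bounds applied to $e^{a_k d(X_i, x_0)}$, whose exponential moments of every order are finite by hypothesis.

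In a second step I would invoke the fact that on a $W_1$-compact set the narrow and $W_1$ topologies coincide (a continuous bijection from a compact space to a Hausdorff space is a homeomorphism). Hence for any $W_1$-closed $A$ the intersection $A \cap K_L$ is narrowly closed, and the classical weak-topology Sanov upper bound of \cite{dembo1993large} yields
\begin{equation*}
\limsup_n \frac{1}{n} \log \pr(L_n \in A \cap K_L) \leq -H(A \cap K_L | \mu) \leq -H(A | \mu).
\end{equation*}
Combining via $\pr(L_n \in A) \leq \pr(L_n \in A \cap K_L) + \pr(L_n \notin K_L)$ and sending $L \to \infty$ gives the claim.

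The main obstacle is the construction of the exponentially tight exhaustion $K_L$ with the correct rate: one must arrange the constants $M_{k, L}$ so that the Cram\'er deviation exponent remains above $L$ uniformly in $k$, while the sequence $a_k \to \infty$ forces the uniform $1$-integrability needed for $W_1$-compactness. Here the full strength of the ``for all $a > 0$'' exponential moment is what is used; the $\alpha(\mathcal{T}_d)$ inequality enters only indirectly (for instance via the quantitative estimates of Theorem \ref{main_thm}) to sharpen the rate of tightness if one wishes to track the constants.
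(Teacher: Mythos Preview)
The paper does not prove this proposition: it is quoted from Wang \cite{wang2010sanov} and used only to confirm that the asymptotic rate in Theorem \ref{main_thm} matches the large-deviation prediction. So there is no in-paper argument to compare against, and your overall strategy is the standard route to Sanov in the $W_1$ topology (lower bound by change of measure, upper bound via exponential tightness plus the weak-topology Sanov bound). One side remark: the $\alpha(\mathcal{T}_d)$ hypothesis plays no role in your argument, and indeed it should not --- the paper only uses it \emph{after} the proposition, to lower-bound $H(A|\mu)$ for the specific set $A = \{\nu : W_1(\mu, \nu) \geq t\}$.

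There is, however, a genuine gap in your exponential-tightness step. You propose to bound $\pr(\int e^{a_k d(x, x_0)} dL_n > M_{k,L})$ by a Cram\'er estimate, asserting that $e^{a_k d(X_i, x_0)}$ has ``exponential moments of every order \ldots\ by hypothesis''. This is not what the hypothesis says: $\int e^{a d} d\mu < \infty$ for all $a$ means that $Y_k := e^{a_k d(X, x_0)}$ has finite \emph{polynomial} moments of every order (since $\mathbb{E}[Y_k^p] = \int e^{p a_k d} d\mu < \infty$), but gives no control on $\mathbb{E}[e^{\lambda Y_k}]$, which is a double-exponential moment of $d(X, x_0)$. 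Without it the Cram\'er rate for $Y_k$ may vanish identically on $(\mathbb{E}[Y_k], \infty)$, and your union bound collapses.

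The fix is to replace the family $u \mapsto e^{a_k u}$ by a single superlinear $g$ with $\int e^{g(d(x, x_0))} d\mu < \infty$. Such a $g$ exists precisely because the exponential moment is finite for \emph{every} $a$: choose $R_k \uparrow \infty$ with $\int_{d > R_k} e^{(k+1) d} d\mu \leq 2^{-k}$ and set $g(u) = k u$ on $[R_k, R_{k+1})$. Then $g(u)/u \to \infty$, so the set $K_L = \{\nu : \int g(d(x,x_0)) d\nu \leq C_L\}$ is $W_1$-compact by de la Vall\'ee Poussin and lower semicontinuity, while a one-line Chernoff bound
\begin{equation*}
\pr\Big(\int g(d) \, dL_n > C_L\Big) \leq e^{-n C_L}\Big(\int e^{g(d)} d\mu\Big)^n
\end{equation*}
gives exponential tightness with $C_L = L + \log \int e^{g(d)} d\mu$. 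With this correction the remainder of your argument goes through.
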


Consider the closed set $A = \{ \nu \in \Pro(E), \, W_1(\mu, \nu) \geq t \}$, then we have according to the above

\begin{equation*}
\limsup_{n \rightarrow + \infty} \frac{1}{n} \log \pr(W_1(L_n, \mu) \geq t)  \leq - \alpha(t).
\end{equation*}

With Theorem \ref{main_thm} (and the remark following it), we obtain the bound

\begin{equation*}
\limsup_{n \rightarrow + \infty} \frac{1}{n} \log \pr(W_1(L_n, \mu) \geq t)  \leq - \alpha(ct) 
\end{equation*}

 for all $c < 1$, and since $\alpha$
is left-continuous, we indeed obtain the same asymptotic bound as from Sanov's theorem.

\bigskip

Let us come back to the non-asymptotic regime.
When we assume for example a $\bt_1$ inequality, we get a bound
 in the form $\pr ( W_1( L_n, \mu) \geq t ) \leq C(t) e^{- C n t ^2}$ involving 
the large constant $C(t)$. By the Kantorovich-Rubinstein dual formulation of $W_1$, this amounts to 
simultaneous deviation
inequalities for all $1$-Lipschitz observables. We recall briefly the well-known fact
 that it is fairly easy to obtain a deviation inequality for one
Lipschitz observable without a constant depending on the deviation scale $t$. 
Indeed, consider a $1$-Lipschitz function $f$ and a sequence
$X_i$ of i.i.d. variables with law $\mu$. By Chebyshev's bound, for $\theta > 0$,

\begin{eqnarray*}
 \pr ( \frac{1}{n} \sum f(X_i) - \int f \mu \geq \eps) & \leq & \exp - n [ \theta \eps - \log (\int e^{\theta f(x)} \mu (dx) e^{- \theta \int f \mu}) ]
\end{eqnarray*}

According to Bobkov-G\"{o}tze's dual characterization of $\bt_1$, the term inside the $\log$ is bounded above by $e^ {C \theta^2}$, for some positive $C$, whence
$\pr ( \frac{1}{n} \sum f(X_i) - \int f \mu \geq \eps) \leq \exp - n [ \theta \eps - C \theta^2 ]$.
Finally, take $\theta = \frac{1}{2 C} \eps$ to get

\begin{equation*}
\pr ( \frac{1}{n} \sum f(X_i) - \int f \mu \geq \eps) \leq e^{- C n t^2/2}. 
\end{equation*}

Thus, we may see the multiplicative constant that we obtain as a trade-off
for the obtention of uniform deviation estimates on all Lipschitz observables.

\bigskip

\subsection{Examples of application}

 For practical purposes, it is important to give the order of magnitude
 of the multiplicative constant $\mathcal{C}_t$ depending on $t$.
We address this question on several important examples in this paragraph.

\subsubsection{The $\R^d$ case}

\begin{example} \label{result_R}
 
Denote $\theta(x) = 32 x \log \left[ 2 \left( 32 x \log 32 x - 32 x + 1  \right) \right]$.
In the case $E = \R^d$, the numerical constant $\mathcal{C}_t$ appearing in Theorem \ref{main_thm}
satisfies :

\begin{equation} \label{real_case}
 \mathcal{C}_t \leq 2 \left( 1 + \theta ( \frac{1}{a t} ) \right) 2^{ \displaystyle C_d \theta( \frac{1}{a t} )^d }
\end{equation}

where $C_d$ only depends on $d$. In particular, for all $t \leq \frac{1}{2 a}$, there exist numerical constants $C_1$ 
and $C_2$ such that

\begin{equation*}
 \mathcal{C}_t \leq C_1 (1 + \frac{1}{at} \log \frac{1}{at}) e^{ \displaystyle C_d C_2^d (\frac{1}{at} \log \frac{1}{a t})^d}.
\end{equation*}

\end{example}

\begin{remark}
 The constants $C_d$, $C_1$, $C_2$ may be explicitely determined from the proof. We do not do so and only state that $C_d$ grows exponentially with $d$.
\end{remark}

\begin{proof}

For a measure $\mu \in \Pro(\R^d)$, a convenient natural choice for a compact set of large measure is a
Euclidean ball. Denote $B_R = \{x \in \R^d, |x| \leq R \}$. We will denote by $C_d$ a constant depending only on
the dimension $d$, that may change from line to line. Suppose that
$\mu$ satisfies the assumptions in Theorem \ref{main_thm}. By Chebyshev's bound,
 $\mu(B_R^c) \leq 2 e^{-a R}$, so we may choose $K = B_{R_t}$ with

\begin{equation*} 
 R_t \geq \frac{1}{a} \log \left[ 2 \left(\frac{32}{a t} \log \frac{32}{ a t} - \frac{32}{a t} + 1 \right) \right] .
\end{equation*}

Next, the covering numbers for $B_R$ are bounded by :

\begin{equation*}
 \mathcal{N}(B_R, \delta) \leq C_d \left( \frac{R}{\delta} \right)^d.
\end{equation*}
Using the bound (\ref{covering_number_lip_2}) of Proposition \ref{prop_covering_number_lip}, we have

\begin{equation*}
 \mathcal{C}_t \leq \left( 2 + 2 \lfloor \frac{32 R_t}{t} \rfloor \right) 2^{ \displaystyle C_d \left(\frac{ 32 R_t}{t} \right)^d }.
\end{equation*}

This concludes the proof for the first part of the proposition.
 The second claim derives from the fact that for $x > 2$, there exists
a numerical constant $k$ such that
$\theta(x) \leq k x \log x$.

\end{proof}

Example \ref{result_R} improves slightly upon the result for the $W_1$ metric in \cite{quantit_conc_ineq}.
One may wonder whether this order of magnitude is close to optimality. It is in fact not sharp, and we point out
where better results may be found.

In the case $d = 1$, 
$W_1(L_n,  \mu)$ is bounded above by the Kolmogorov-Smirnov divergence
$\sup_{x \in \R} | F_n(x) - F(x)|$ where $F_n$ and $F$ denote respectively the cumulative distribution functions (c.d.f.)
of $L_n$ and $\mu$. As a consequence of the celebrated Dvorestky-Kiefer-Wolfowitz theorem
(see \cite{massart1990tight}, \cite{van1996weak}), we have the following :
if $\mu \in \Pro(\R)$ has a continuous c.d.f., then

\begin{equation*}
 \pr(W_1( L_n, \mu) > t ) \leq 2 e^{-2 n t^2}.
\end{equation*}

The behaviour of the Wasserstein distance between empirical and true distribution in one dimension has been very
thoroughly studied by del Barrio, Gin\'{e}, Matran, see \cite{del1999central}.

In dimensions greater than $1$, the result is also not sharp. Integrating (\ref{real_case}), one recovers a bound of the type
$\mathbb{E} (W_1(L_n, \mu)) \leq C n^{-1/(d + 2)} (\log n)^c$. Looking into the proof of our main result, one sees that any improvement of this bound will
automatically give a sharper result than (\ref{real_case}).
For the uniform measure over the unit cube, results have been known for a while. The pioneering work in this framework is
the celebrated article of Ajtai, Komlos and Tusn\'ady \cite{ajtai1984optimal}.
 M.Talagrand \cite{talagrand1992matching} showed that when $\mu$ is the uniform distribution on the unit cube 
(in which case it clearly satisfies a $\bt_1$ inequality) and $d \geq 3$, there exists $c_d \leq C_d$ such that 

\begin{equation*}
 c_d n^{-1/d} \leq \mathbb{E} W_1(L_n, \mu) \leq C_d n^{-1/d}.
\end{equation*}

Sharp results for general measures are much more recent : as a consequence of the results of F. Barthe and C. Bordenave \cite{barthe2011combinatorial},
one may get an estimate of the type $\mathbb{E} W_1(L_n, \mu) \leq c n^{-1/d}$ under some polynomial moment condition on $\mu$.

\subsubsection{A first bound for Standard Brownian motion}

We wish now to illustrate our results on an infinite-dimensional case. A first natural candidate is the law of the standard Brownian motion,
with the sup-norm as reference metric. The natural idea that we put in place in this paragraph is to choose as large compact sets the $\alpha$-H\"{o}lder
balls, which are compact for the sup-norm. However the remainder of this paragraph serves mainly an illustrative purpose : we will obtain
sharper results, valid for general Gaussian measures on (separable) Banach spaces, in paragraph \ref{subsection_gaussian}.

We consider the canonical Wiener space 
$\left( \mathcal{C}([0, 1], \R), \gamma, \| .\|_{\infty} \right)$, 
 where $\gamma$ denotes the 
Wiener measure, under which the coordinate process $B_t : \omega \rightarrow \omega(t)$ is a standard Brownian motion.

\begin{example} \label{example_s_b_m}
 Denote by $\gamma$ the Wiener measure on $\left( \mathcal{C}([0, 1], \R), \gamma, \| .\|_{\infty} \right)$, and
for $\alpha < 1/2$, define

\begin{equation*}
 C_\alpha = 2^{1+ \alpha}\frac{2^{(1-2 \alpha)/4}}{1 - 2^{4/(1-2 \alpha)}} \|Z\|_{4/(1-2\alpha)}
\end{equation*}

where $\|Z\|_p$ denotes the $L_p$ norm of a $\mathcal{N}(0, 1)$ variable $Z$.
There exists $k > 0$ such that for every $t \leq 144/\sqrt{2 \log 2}$, 
$\gamma$ satisfies 

\begin{equation*}
 \pr( W_1( L_n, \gamma) \geq t) \leq \mathcal{C}_t e^{-n t^2/64}
\end{equation*}

with

\begin{equation*}
 \mathcal{C}_t  \leq \exp \exp (k C_\alpha \frac{\sqrt{\log 1/t}}{t})^{1/\alpha}.
\end{equation*}

\end{example}

\begin{proof}

For $0 < \alpha \leq 1$, define the $\alpha$-H\"{o}lder semi-norm as

\begin{equation*}
 | x|_\alpha = \sup_{t,s \in [0, 1]} \frac{|x(t) - x(s)|}{|t-s|^\alpha}.
\end{equation*}

Let $0 < \alpha \leq 1$ and denote by $C_\alpha$ the Banach space of $\alpha$-H\"{o}lder continuous functions vanishing at $0$, endowed with the norm
$\|.\|_\alpha$. 
It is a classical fact that the Wiener measure is
concentrated on $C_\alpha$ for all $\alpha \in ]0, 1/2[$.
By Ascoli-Arzela's theorem, $C_\alpha$ is compactly embedded in $\mathcal{C}([0, 1], \R)$, or in other words
the $\alpha$-H\"{o}lder balls $B_{\alpha, R} = \{ x \in \mathcal{C}([0, 1], \R), \, \|x\|_\alpha \leq R \}$
are totally bounded for the uniform norm. This makes $B(\alpha, R)$ good candidates for compact spaces of large measure.
We need to evaluate how big $B(\alpha, R)$ is w.r.t. $\gamma$.

To this end we use the fact that the Wiener measure is also a Gaussian measure on $C_\alpha$ (see \cite{baldi1992large}).
 Therefore Lemma \ref{lemma_gaussian_tail}
applies : denote 

\begin{equation*}
 m_\alpha = \mathbb{E} \sup_t \| B_t \|_\alpha, \quad s^2_\alpha = \mathbb{E} (\sup_t \| B_t \|_\alpha)^2,
\end{equation*}

we have 

\begin{equation*}
 \gamma ( B(\alpha, R)^c) \leq 2 e^{- (R - m_\alpha)^2/2 s^2_\alpha}
\end{equation*}

for $R \geq m_\alpha$. Choosing

\begin{equation} \label{ineq_requirement}
 R_t \geq m_\alpha + \left[ 2 s^2_\alpha \log 2 ( \frac{32}{at} \log \frac{32}{at} - \frac{32}{at} + 1 )\right]^{1/2}
\end{equation}

guarantees that

\begin{equation*}
\gamma ( B(\alpha, R_t)^c) \leq \left( \frac{32}{at} \log \frac{32}{at} - \frac{32}{at} + 1  \right)^{-1}.
\end{equation*}

On the other hand, according to Corollary \ref{holder_moments_sbm}, $m_\alpha$ and $s_\alpha$ are bounded by $C_\alpha$.
And Lemma \ref{lemma_small_exp_moment_wiener} shows that choosing $a = \sqrt{2 \log 2}/3$ ensures
$\mathbb{E} e^{a \sup_t |B_t| } \leq 2$.

Elementary computations show that for $t \leq 144/\sqrt{2 \log 2}$, we can pick

\begin{equation*}
 R_t = 3 C_\alpha\sqrt{ \log(96/(\sqrt{2 \log 2}t))}
\end{equation*}

to comply with the requirement in (\ref{ineq_requirement}).

Bounds for the covering numbers in $\alpha$-H\"{o}lder balls are computed in \cite{bolley2005quantitative} :
\begin{equation} \label{covering_number_holder}
 \mathcal{N}(B(\alpha, R), \delta) \leq 10 \frac{R}{\delta} \exp \left[\log (3) 5^{\frac{1}{\alpha}} \left(\frac{R}{\delta}\right)^{\frac{1}{\alpha}} \right].
\end{equation}

We recover the (unpretty !) bound

\begin{align*}
\mathcal{C}_t & \leq 2(1 + 96 \frac{C_\alpha}{t} \sqrt{\log 96/(\sqrt{2 \log 2}t)}) \exp \left[ 240 \log 2 \frac{C_\alpha}{t} \sqrt{\log 96/(\sqrt{2 \log 2}t)} \right. \\
{} & \times \left. \exp \log 3 \left(120 \frac{C_\alpha}{t} \sqrt{\log 96/(\sqrt{2 \log 2}t)} \right)^{1/\alpha} \right].
\end{align*}

The final claim in the Proposition is obtained by elementary majorizations.

\end{proof}

\subsubsection{Paths of S.D.E.s}

H.Djellout, A.Guillin and L.Wu established a $\bt_1$ inequality for paths of S.D.E.s that
allows us to work as in the case of Brownian motion. We quote their result from \cite{djellout_guillin_wu}.

Consider the S.D.E. on $\R^d$

\begin{equation} \label{s_d_e_example}
 d X_t = b(X_t) dt + \sigma(X_t) d B_t, \quad X_0 = x_0 \in \R^d
\end{equation}

with $b : \R^d \rightarrow \R^d$, $\sigma : \R^d \rightarrow \mathcal{M}_{d \times m}$ and $(B_t)$ is a 
standard $m$-dimensional Brownian motion. We assume that $b$ and $\sigma$ are locally Lipschitz and that
for all $x, y \in \R^d$,

\begin{equation*}
 \sup_x | \sqrt{ \text{tr}\sigma(x)^t\sigma(x)} | \leq A, \quad \langle y - x, b(y) - b(x) \rangle \leq B(1 + |y-x|^2)
\end{equation*}

For each starting point $x$ it has a unique non-explosive solution denoted $(X_t(x)_{t \geq 0}$ and we denote its law on
$\mathcal{C}([0, 1], \R^d)$ by $\mathbb{P}_x$.

\begin{theorem} [\cite{djellout_guillin_wu}]
 Assume the conditions above. There exists $C$ depending on $A$ and $B$ only such that
for every $x \in \R^d$, $\mathbb{P}_x$ satisfies a $\bt_1(C)$ inequality on the space 
$\mathcal{C}([0, 1], \R^d)$ endowed with the sup-norm.
\end{theorem}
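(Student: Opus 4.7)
My plan is to verify the definition of $\bt_1$ via a synchronous coupling driven by Girsanov's theorem; this is the canonical route for transportation-entropy inequalities on diffusion path spaces. It suffices to consider $\nu \ll \mathbb{P}_x$ with $H(\nu | \mathbb{P}_x) < +\infty$. Girsanov's theorem then produces an adapted square-integrable drift $(u_t)_{t \in [0,1]}$ such that
\[
H(\nu | \mathbb{P}_x) = \tfrac{1}{2} \mathbb{E}^{\nu} \int_0^1 |u_s|^2 \, ds,
\]
and such that, under $\nu$, the coordinate process $Y$ satisfies
\[
dY_t = b(Y_t) \, dt + \sigma(Y_t) u_t \, dt + \sigma(Y_t) \, d\tilde B_t, \qquad Y_0 = x,
\]
for some $\nu$-Brownian motion $\tilde B$.

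The second step is to build a synchronous coupling: on one probability space, let $X$ solve (\ref{s_d_e_example}) driven by a Brownian motion $W$, and let $Y$ solve the $u$-perturbed equation driven by the same $W$, both started at $x$. Itô's formula gives
\[
d|Y_t - X_t|^2 = 2\langle Y_t - X_t, b(Y_t) - b(X_t)\rangle \, dt + 2\langle Y_t - X_t, \sigma(Y_t) u_t\rangle \, dt + \|\sigma(Y_t) - \sigma(X_t)\|_{\mathrm{HS}}^2 \, dt + dM_t,
\]
where $M_t$ is a local martingale. The one-sided Lipschitz assumption on $b$ controls the first term, the bound $\|\sigma\|_{\mathrm{HS}} \leq A$ gives $|\sigma(Y_t) u_t| \leq A |u_t|$, and a crude majorisation yields $\|\sigma(Y_t) - \sigma(X_t)\|_{\mathrm{HS}}^2 \leq 4 A^2$. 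A Young splitting $2\langle Y - X, \sigma(Y) u\rangle \leq |Y - X|^2 + A^2 |u|^2$, Burkholder-Davis-Gundy applied to $M$, and Gronwall's lemma should then yield
\[
\mathbb{E} \sup_{t \leq 1} |Y_t - X_t|^2 \leq C(A, B) \, \mathbb{E}\int_0^1 |u_s|^2 \, ds = 2 C(A, B) \, H(\nu | \mathbb{P}_x).
\]
Since $W_1(\nu, \mathbb{P}_x) \leq \mathbb{E} \|Y - X\|_\infty \leq (\mathbb{E}\|Y - X\|_\infty^2)^{1/2}$, the $\bt_1(2 C(A,B))$ inequality follows, with a constant depending only on $A$ and $B$ as claimed.

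The main technical obstacle is the passage to the supremum in time: Itô first gives an estimate on $\mathbb{E} |Y_t - X_t|^2$ at fixed $t$, and absorbing the BDG constant into a bound on $\mathbb{E} \sup_{s \leq t} |Y_s - X_s|^2$ requires care, typically by stopping $M$ to make it a genuine martingale and then applying Gronwall to the supremum after splitting off a small multiple of $\mathbb{E} \sup_{s \leq t}|Y_s - X_s|^2$ on the right. A secondary delicate point is that $\sigma$ is only assumed bounded and locally Lipschitz: the crude bound $\|\sigma(Y_t) - \sigma(X_t)\|_{\mathrm{HS}}^2 \leq 4 A^2$ suffices to close Gronwall with a constant depending only on $A$ and $B$, but a sharp constant would call for a combined dissipativity hypothesis of the form $\|\sigma(y) - \sigma(x)\|_{\mathrm{HS}}^2 + 2\langle y - x, b(y) - b(x)\rangle \leq \kappa(1 + |y - x|^2)$, which is in fact the form actually used in Djellout-Guillin-Wu and which directly produces the desired Gronwall exponent.
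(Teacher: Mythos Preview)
The paper does not prove this theorem: it is quoted verbatim from Djellout--Guillin--Wu and used as a black box. So there is no ``paper's own proof'' to compare against; what can be compared is your outline versus the actual argument in \cite{djellout_guillin_wu}.

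Your synchronous-coupling/Girsanov route is the standard machinery for $\bt_2$ inequalities on diffusion paths, but under the hypotheses stated here it does \emph{not} close. The trouble is the two additive constants you introduce. First, the drift condition is $\langle y-x, b(y)-b(x)\rangle \leq B(1+|y-x|^2)$, so Itô gives a term $2B\,dt$ that has nothing to do with $|Y-X|$ or $u$. Second, your ``crude majorisation'' $\|\sigma(Y_t)-\sigma(X_t)\|_{\mathrm{HS}}^2 \leq 4A^2$ contributes another such term. After Gronwall you obtain
\[
\mathbb{E}\sup_{t\leq 1}|Y_t-X_t|^2 \;\leq\; C_1(A,B)\,\mathbb{E}\!\int_0^1 |u_s|^2\,ds \;+\; C_2(A,B)
\]
with $C_2(A,B)>0$, not the homogeneous bound you claim. (Sanity check: take $u\equiv 0$; then $Y=X$ by pathwise uniqueness, yet your Gronwall estimate produces a strictly positive right-hand side --- a valid but useless inequality.) The additive $C_2$ blocks the passage to $W_1(\nu,\mathbb{P}_x)^2 \leq C\,H(\nu|\mathbb{P}_x)$. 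Your closing remark that the combined dissipativity hypothesis $\|\sigma(y)-\sigma(x)\|_{\mathrm{HS}}^2 + 2\langle y-x, b(y)-b(x)\rangle \leq \kappa(1+|y-x|^2)$ ``directly produces the desired Gronwall exponent'' repeats the error: the $+1$ is still there.

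The coupling approach \emph{would} work if $\sigma$ were globally Lipschitz and the one-sided drift condition had no additive $1$; that is exactly the regime in which \cite{djellout_guillin_wu} establish $\bt_2$. For $\bt_1$ under the weaker hypotheses quoted here (bounded $\sigma$, one-sided growth on $b$), their proof goes by a different route: they invoke their own integral characterisation of $\bt_1$ --- namely that $\bt_1(C)$ is equivalent to $\int e^{a\, d(x_0,\cdot)^2}\,d\mu < +\infty$ for some $a>0$ --- and then verify directly, via exponential martingale / stopping-time estimates, that $\mathbb{E}_x \exp\bigl(a \sup_{t\leq 1}|X_t-x|^2\bigr)$ is finite with a bound depending only on $A$ and $B$. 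No coupling is used.
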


We will now state our result. A word of caution : in order to balance readability, the following computations are neither
optimized nor made fully explicit. However it should be a simple, though dull, task for the reader to track the dependence of
the numerical constants on the parameters.

From now on we make the simplifying assumption that the drift coefficient is globally bounded by $B$
(this assumption is certainly not minimal).

\begin{example}
 Let $\mu$ denote the law of the solution of the S.D.E. (\ref{s_d_e_example}) on
the Banach space $C([0, 1], \R^d)$ endowed with the sup-norm. Let $C$ be such that $\mu$ satisfies
$\bt_1(C)$.
For all $0 < \alpha < 1/2$ there exist $C_\alpha$ and $c$ depending only on
$A$, $B$, $\alpha$ and $d$, and such that for $t \leq c$,

\begin{equation*}
 \pr(W_1(L_n, \mu) \geq t) \leq \mathcal{C}_t e^{- n t^2/ 8 C}
\end{equation*}

and 

\begin{equation*}
 \mathcal{C}_t \leq \exp \exp \left[ C_\alpha \left( \log \frac{1}{t} \right)^{-1 + 1/2\alpha} \left( \frac{1}{t} \right)^{-1 + 3/2\alpha} \right].
\end{equation*}

\end{example}

\begin{proof}

The proof goes along the same lines as the Brownian motion case, so we only outline the important steps.
First, there exists $a$ depending explicitely on $A$, $B$, $d$ such that $\mathbb{E}_{\mathcal{P}_x} e^{a\|X_.\|_\infty} \leq 2$ :
this can be seen by checking that the proof of Djellout-Guillin-Wu actually gives the value of a Gaussian moment for 
$\mu$ as a function of $A$, $B$, $d$, and using standard bounds.

Corollary \ref{corollary_holder_sde} applies for $\alpha < 1/2$ and $p$ such that $1/p = 1/2 - \alpha$ : 
there exists $C' < + \infty$ depending
explicitely on $A$, $B$, $\alpha$, $d$, such that $\mathbb{E} \|X_. \|_{\alpha}^p \leq C'$.
Consequently,

\begin{equation*}
 \mu ( B(\alpha, R)^c) \leq C'/ R^p.
\end{equation*}

So choosing

\begin{equation*}
 R = \left(C'  (\frac{32}{at} \log \frac{32}{at} - \frac{32}{at} + 1) \right)^{1/p}
\end{equation*}

guarantees that

\begin{equation*}
\mu ( B(\alpha, R_t)^c) \leq \left( \frac{32}{at} \log \frac{32}{at} - \frac{32}{at} + 1  \right)^{-1}.
\end{equation*}

For $t \leq c$ small enough, $R \leq C'' \left( \frac{1}{t} \log \frac{1}{t} \right)^{1/p}$
with $c$, $C''$ depending on $A$, $B$, $\alpha$, $d$.
The conclusion is reached again by using estimate (\ref{covering_number_holder}) on the covering numbers of H\"{o}lder balls.

\end{proof}

\subsubsection{Gaussian r.v.s in Banach spaces} \label{subsection_gaussian}

In this paragraph we apply Theorem \ref{variant} to the case where $E$ is a separable Banach space with norm $\|.\|$, 
and $\mu$ is a centered Gaussian random variable with values in $E$, meaning that the image of
$\mu$ by every continuous linear functional $f \in E^*$ is a centered Gaussian variable in $\R$.
The couple $(E, \mu)$ is said to be a Gaussian Banach space.

Let $X$ be a $E$-valued r.v. with law $\mu$, and define the weak variance of $\mu$ as 

\begin{equation*}
 \sigma = \sup_{f \in E^*, \, |f| \leq 1} \left( \mathbb{E} f^2(X) \right)^{1/2}.
\end{equation*}

The small ball function of a Gaussian Banach space $(E, \mu)$ is the function

\begin{equation*}
 \psi(t) = - \log \mu(B(0, t)).
\end{equation*}

We can associate to the couple $(E, \mu)$ their Cameron-Martin Hilbert space $H \subset E$, see e.g. \cite{ledoux1996isoperimetry}
for a reference. It is known that the small ball function has deep links with the covering numbers of the unit ball of $H$,
see e.g. Kuelbs-Li \cite{kuelbs1993metric} and Li-Linde \cite{li1999approximation}, as well as with the 
approximation of $\mu$ by measures with finite support in Wasserstein distance (the quantization or optimal quantization problem), 
see Fehringer's Ph.D. thesis \cite{fehringer2001kodierung}, Dereich-Fehringer-Matoussi-Scheutzow \cite{dereich2003link}, Graf-Luschgy-Pag\`{e}s
\cite{graf2003functional}. It should thus come as no surprise that we can give a bound on the constant $K_t$ depending solely on $\psi$
and $\sigma$.
This is the content of the next example.

\begin{example} \label{theorem_gaussian_vector}
 Let $(E, \mu)$ be a Gaussian Banach space. Denote by $\psi$ its small ball function and by $\sigma$ its weak variance.
Assume that $t$ is such that $\psi(t/16) \geq \log 2$ and $t/\sigma \leq 8 \sqrt{2 \log 2}$.
Then

\begin{equation*}
\pr (W_1(L_n, \mu) \geq t) \leq K_t e^{- n t^2/16 \sigma^2}
\end{equation*}

with

\begin{equation*}
 K_t = \exp \exp \left[ c (\psi(t/32) + \log(\sigma/t)) \right]
\end{equation*}

for some universal constant $c$.

\end{example}

A bound for $c$ may be tracked in the proof.

\begin{proof}

\emph{Step 1. Building an approximating measure of finite support.}
 
Denote by $K$ the unit ball of the Cameron-Martin space associated to $E$ and $\mu$, and by
$B$ the unit ball of $E$.
According to the Gaussian isoperimetric inequality (see \cite{ledoux1996isoperimetry}),
for all $\lambda > 0$ and $\varepsilon > 0$,

\begin{equation*}
\mu( \lambda K + \varepsilon B) \geq \Phi \left( \lambda + \Phi^{-1}( \mu( \varepsilon B)) \right)
\end{equation*}

where $\Phi(t) = \int_{- \infty}^t e^{- u^2/2} d u / \sqrt{2 \pi}$ is the Gaussian c.d.f.. Note 

\begin{equation*}
 \mu' = \frac{1}{ \mu( \lambda K + \varepsilon B)} \mathbf{1}_{ \lambda K + \varepsilon B} \mu
\end{equation*}

the restriction of $\mu$ to the enlarged ball. As proved in \cite{boissard2011bounding}, Appendix 1,
 the Gaussian measure $\mu$ satisfies a $\bt_2(2 \sigma^2)$ inequality, 
hence a $\bt_1$ inequality with the same constant. We have

\begin{align*}
 W_1(\mu, \mu') & \leq \sqrt{ 2 \sigma^2 H( \mu' | \mu)} = \sqrt{-2 \sigma^2 \log \mu( \lambda K + \varepsilon B)} \\
{} & \leq \sqrt{-2 \sigma^2 \log \Phi( \lambda  + \Phi^{-1}(\mu(\varepsilon B)))}.
\end{align*}

On the other hand, 
denote $k = \mathcal{N}(\lambda K, \varepsilon)$ the covering number of $\lambda K$
(w.r.t. the norm of $E$).
Let $x_1, \ldots, x_k \in K$ be such that union of the balls
$B(x_i, \varepsilon)$ contains $\lambda K$.
From the triangle inequality we get the inclusion

\begin{equation*}
 \lambda K + \varepsilon B \subset \bigcup_{i = 1}^k B(x_i, 2 \varepsilon).
\end{equation*}

Choose a measurable map $T : \lambda K + \varepsilon B \rightarrow \{ x_1, \ldots, x_k \}$
such that for all $x$, $|x - T(x)| \leq 2 \varepsilon$.
The push-forward measure $\mu^k = T_\# \mu'$ has support in the finite set
$\{ x_1, \ldots, x_k \}$, and clearly

\begin{equation*}
 W_1( \mu', \mu^k) \leq 2 \varepsilon.
\end{equation*}

Choose $\varepsilon = t/16$, and

\begin{align}
 \lambda & = \Phi^{-1}( e^{-t^2/(128 \sigma^2)}) - \Phi^{-1}( \mu( \varepsilon B)) \\
{} \label{bound_lambda} & = \Upsilon^{-1}( e^{ - \psi( t/16)} ) + \Phi^{-1} (  e^{-t^2/(128 \sigma^2)})
\end{align}

where $\Upsilon (t) = \int_t^{+ \infty} e^{- u^2/2} d u / \sqrt{2 \pi}$
is the tail of the Gaussian distribution (we have used the fact that $\Phi^{-1} + \Upsilon^{-1} = 0$, which comes from symmetry of the Gaussian distribution).

Altogether, this ensures that $W_1(\mu, \mu^k) \leq t/4$.

\emph{Step 2. Bounding $\lambda$.}

We can use the elementary bound 
$\Upsilon(t) \leq e^{-t^2/2}$, $t \geq 0$ to get

\begin{equation*}
 \Upsilon^{-1}(u) \leq \sqrt{-2 \log u}, \quad 0 < u \leq 1/2
\end{equation*}

which yields $\Upsilon^{-1} (e^{ - \psi( t/16)}) \leq \sqrt{ \psi(t/16)}$
as soon as $\psi(t/16) \geq \log 2$.
Likewise, 

\begin{align*}
 \Phi^{-1}(e^{-t^2/128 \sigma^2}) & = \Upsilon^{-1}(1 - e^{-t^2/128 \sigma^2}) \\
{} & \leq \sqrt{2 \log \frac{1}{1 - e^{- t^2/128 \sigma^2}}} 
\end{align*}

as soon as $t^2/128 \sigma^2 \leq \log 2$. Moreover, for $u \leq \log 2$, we have
$1/(1 - e^{-u}) \leq 2 \log 2 / u$. Putting everything together, we get

\begin{equation} \label{bound_lambda_explicit}
 \lambda \leq \sqrt{ \psi(t/16)} + c\sqrt{\log \sigma/t}
\end{equation}

for some universal constant $c > 0$.
Observe that the first term in (\ref{bound_lambda_explicit}) will usually be much larger than the second one.

\emph{Step 3.}

From Theorem \ref{variant} we know that

\begin{equation*}
  \pr( W_2( \mu, L_n) \geq t) \leq K_t e^{-nt^2/16\sigma^2}
\end{equation*}

with

\begin{equation*}
 K_t = \exp \left[ \frac{1}{2 \sigma^2} \frac{k}{2} (\text{Diam } \{x_1, \ldots, x_k\})^2 \right].
\end{equation*}

The diameter is bounded by $\text{Diam }K = 2 \sigma \lambda \leq c \sigma ( \sqrt{ \psi(t/16)} + c\sqrt{\log \sigma/t})$.

We wish now to control $k = \mathcal{N}(\lambda K, t/16)$ in terms of the small ball function $\psi$. 
The two quantities are known to be connected : for example, Lemma 1 in \cite{kuelbs1993metric} gives the bound

\begin{equation*}
 \mathcal{N}( \lambda K, \varepsilon) \leq e^{\lambda^2/2 + \psi(\varepsilon/2)}.
\end{equation*}

Thus 

\begin{equation*}
 k \leq \exp \left[ \psi(t/16) + \psi(t/32) + c \log \sigma/t \right ].
\end{equation*}

With some elementary majorizations, this ends the proof.

\end{proof}

We can now sharpen the results of Proposition \ref{example_s_b_m}.
Let $\gamma$ denote the Wiener measure on $\mathcal{C}([0, 1], \R^d)$ endowed with the sup-norm, and denote by $\sigma^2$ its weak variance.
Let $\lambda_1$ be the first nonzero eigenvalue
of the Laplacian operator on the ball of $\R^d$ with homogeneous Dirichlet boundary conditions : 
it is well-known that the small ball function for the Brownian motion on $\R^d$ is equivalent to $\lambda_1/t^2$
when $t \rightarrow + \infty$. for $t$ small enough.

As a consequence, there exists $C = C(d)$ such that for small enough $t > 0$ we have

\begin{equation}
 W_1( L_n, \gamma) \leq \exp \exp \left[ C \lambda_1 / t^2  \right] e^{- n t^2/16 \sigma^2}.
\end{equation}

\subsection{Bounds in the dependent case : occupation measures of contractive Markov chains}

The results above can be extended to the convergence of the occupation measure for a
Markov process. As an example, we establish the following result.

\begin{theorem} \label{theorem_markov}
 Let $P(x, dy)$ be a Markov kernel on $\R^d$ such that
\begin{enumerate}
 \item the measures $P(x, .)$ satisfy a $\bt_1(C)$ inequality
 \item $W_1(P(x, .), P(y, .)) \leq r |x - y|$ for some $r < 1$.
\end{enumerate}

Let $\pi$ denote its invariant measure.
Let $(X_i)_{i \geq 0}$ denote the Markov chain associated with $P$ under $X_0 = 0$.

Set $a = \frac{2}{C} \left( \sqrt{4 m_1^2 + C \log 2} - 2 m_1 \right)$. There exists
$C_d > 0$ depending only on $d$ such that for $t \leq 2/a$,

\begin{equation*}
 \pr( W_1( L_n, \pi) \geq t) \leq K(n, t) \exp - n \frac{(1 - r)^2}{8C} t^2
\end{equation*}

where

\begin{equation*}
 K(n, t) = \exp \left[ \frac {m_1}{\sqrt{n C}} + C_d (\frac{1}{at} \log \frac{1}{at})^\frac{d}{2} \right]^2.
\end{equation*}

\end{theorem}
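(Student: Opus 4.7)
The plan is to parallel the proof of Corollary \ref{corollary_bt}, replacing tensorization for product measures by a tensorization result for Markov chains. The three ingredients will be: first, a transportation inequality for the joint law of the chain; second, concentration of $W_1(L_n, \pi)$ around its mean via the Bobkov-G\"{o}tze dual characterization; third, a bound on the mean through covering numbers of Lipschitz classes. Concretely, I would start by invoking the tensorization principle of Djellout-Guillin-Wu \cite{djellout_guillin_wu}: under (1) and (2), the joint law $\mathbb{P}_n$ of $(X_0, \ldots, X_{n-1})$ on $(\R^d)^n$ equipped with the $\ell^1$ distance $d_n(x,y) = \sum_i |x_i - y_i|$ satisfies a $\bt_1$ inequality with constant of order $Cn/(1-r)^2$. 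The mechanism is that a perturbation of a single coordinate propagates along the chain with geometric damping at rate $r$, so the accumulated transport cost sums as a geometric series of ratio $1/(1-r)$.

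Next, Bobkov-G\"{o}tze's dual characterization will turn this into Gaussian deviation bounds for every $1$-Lipschitz function $G : ((\R^d)^n, d_n) \rightarrow \R$:
\begin{equation*}
\mathbb{P}(G - \mathbb{E} G \geq u) \leq \exp\Big(-\frac{(1-r)^2 u^2}{Cn}\Big).
\end{equation*}
The map $(x_0, \ldots, x_{n-1}) \mapsto n\, W_1(\tfrac{1}{n}\sum_i \delta_{x_i}, \pi)$ is $1$-Lipschitz with respect to $d_n$ by the Kantorovich-Rubinstein formula, which transfers the deviation estimate into
\begin{equation*}
\mathbb{P}\big(W_1(L_n, \pi) \geq \mathbb{E} W_1(L_n, \pi) + u\big) \leq \exp\Big(-\frac{n(1-r)^2 u^2}{C}\Big).
\end{equation*}

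To bound $\mathbb{E} W_1(L_n, \pi)$ I would follow the blueprint of Theorem \ref{main_thm} and Example \ref{result_R}: choose a Euclidean ball $K = B_{R_t}$ such that $\pi(K^c)$ is small, cover the class $\mathcal{F}_K$ of normalized $1$-Lipschitz functions on $K$ by a $t/8$-net of cardinality $\mathcal{C}_t$, and control the supremum of $|\frac{1}{n}\sum_k f(X_k) - \int f\, d \pi|$ over this net via the concentration bound above combined with a union bound. A standard Gaussian-maxima argument then produces a mean estimate of order $\sqrt{C \log \mathcal{C}_t / (n(1-r)^2)}$, which with the $\R^d$ covering-number estimate of Example \ref{result_R} delivers the $C_d(\frac{1}{at}\log\frac{1}{at})^{d/2}$ factor. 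The additional $m_1/\sqrt{nC}$ term accounts for the bias from $X_0 = 0 \neq \pi$: coupling the chain with a stationary version and using (2) shows that $\mathbb{E}|X_k - X_k^{\mathrm{stat}}|$ decays geometrically, contributing only an $O(1/\sqrt{n})$ correction after averaging. Squaring and inserting the elementary inequality $(a-b)^2 \geq a^2/2 - b^2$ inside the exponential will then separate the announced factor $e^{-n(1-r)^2 t^2/(8C)}$ from the multiplicative constant $K(n,t)$.

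The hard part will be the tensorization step: although the underlying tools exist in the literature, tracking the exact dependence of the $\bt_1$ constant for $\mathbb{P}_n$ on $1-r$ and on $n$ requires careful chaining of single-step Kantorovich plans, and any slack there propagates directly into the deviation exponent. A secondary technical point is the non-stationary initial condition: the contraction assumption (2) is essential both to control the transient regime of the chain and to ensure that the starting-point bias contributes only an $O(1/\sqrt{n})$ overhead to $\mathbb{E} W_1(L_n, \pi)$.
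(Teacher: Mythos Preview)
Your proposal is essentially correct and follows the same route as the paper: tensorization of $\bt_1$ for the path law via Djellout--Guillin--Wu, concentration of $W_1(L_n,\cdot)$ as a $1/n$-Lipschitz functional, a covering-number bound on the mean as in Example~\ref{result_R}, and the inequality $(a-b)^2\ge a^2/2 - b^2$ to separate the exponent from the prefactor. Two minor points where the paper is more explicit than your sketch: first, rather than working with $W_1(L_n,\pi)$ directly, the paper introduces the averaged occupation measure $\pi_n=\mathbb{E}_{P_n}[L_n]$, applies concentration to $W_1(L_n,\pi_n)$ (whose centering is exactly what the maximal inequality needs), and handles the bias via $W_1(\pi_n,\pi)\le m_1/(n(1-r))$ --- this is equivalent to your coupling argument but cleaner bookkeeping; second, the specific value of $a$ in the statement is not arbitrary but is chosen so that $\int e^{a|x|}P^i(dx)\le 2$ uniformly in $i$, which the paper obtains from the Bobkov--G\"otze bound $\int e^{a|x|}P^i(dx)\le e^{am_1^i+Ca^2/4}$ together with $m_1^i\le 2m_1$, and you would need to supply this step to invoke the $\R^d$ covering estimates.
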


\begin{remark}
 The result is close to the one obtained in the independent case, and, as stressed in the introduction, it holds interest from the perspective of
numerical simulation, in cases where one cannot sample uniformly from a given probability distribution $\pi$ but may build a Markov chain
that admits $\pi$ as its invariant measure.
\end{remark}

\begin{remark}
 We comment on the assumptions on the transition kernel. The first one ensures that the $\bt_1$ inequality is propagated to the laws of $X_n$, $n \geq 1$.
As for the second one, it has appeared several times in the Markov chain literature
 (see e.g. \cite{djellout_guillin_wu}, \cite{ollivier2009ricci}, \cite{joulin2010curvature})
as a particular variant of the Dobrushin uniqueness condition for Gibbs measures. It has a nice geometric interpretation as a positive lower bound on 
the Ricci curvature of the Markov chain, put forward for example in \cite{ollivier2009ricci}. Heuristically,
this condition implies that the Markov chains started from two different points and suitably coupled tend to get closer.
\end{remark}

\section{Proof of Theorem \ref{main_thm}} \label{proof_of_main_thm}

The starting point is the following result, obtained by Gozlan and Leonard (\cite{large_dev_gozlan_leonard}, see Chapter 6) 
by studying the tensorization properties of transportation inequalities.

\begin{lemma} \label{tensorization_lemma}
 
Suppose that $\mu \in \mathcal{P}(E)$ verifies a $\alpha(\mathcal{T}_d)$ inequality. Define on $E^n$ the metric

\begin{equation*}
d^{\oplus n}( (x_1, \ldots, x_n), (y_1, \ldots, y_n)) = \sum_{i = 1}^n d(x_i, y_i).
\end{equation*}

Then $\mu^{ \otimes n} \in \mathcal{P}(E^n)$ verifies a $\alpha'( \mathcal{T}_{d^{\oplus n}})$ inequality, where $\alpha'(t) = \frac{1}{n} \alpha( n t)$.
Hence, for all Lipschitz functionals
$Z : E^n \rightarrow \R$ (w.r.t. the distance $d^{\oplus n}$), we have the
concentration inequality

\begin{equation*}
 \mu^{\otimes n} ( Z \geq \int Z d \mu^{\otimes n} + t) \leq \exp - n  \alpha( \frac{t}{ n  \|Z \|_\text{Lip}}) \quad \text{for all } t \geq 0.
\end{equation*}

\end{lemma}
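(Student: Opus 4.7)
The plan is to run the standard tensorization argument for transport-entropy inequalities in the spirit of Marton/Talagrand/Gozlan-L\'eonard---disintegrating $\nu$ along the coordinates, gluing together conditional optimal couplings, and then collecting the cost by Jensen's inequality applied to the concave function $\alpha^{-1}$---and then to read off the stated concentration bound by the Kantorovich-Rubinstein reformulation of $\alpha(\mathcal{T}_d)$.

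Fix $\nu\in\Pro(E^n)$ with $H(\nu\mid\mu^{\otimes n})<+\infty$ and disintegrate it successively as $\nu(dy_1\cdots dy_n)=\nu_1(dy_1)\,\nu_2(dy_2\mid y_1)\cdots\nu_n(dy_n\mid y_1,\ldots,y_{n-1})$, writing $h_i(y_{<i})=H\bigl(\nu_i(\cdot\mid y_{<i})\,|\,\mu\bigr)$. The chain rule for relative entropy against a product reference gives
\begin{equation*}
H(\nu\mid\mu^{\otimes n})=\sum_{i=1}^n\int h_i(y_{<i})\,\nu(dy).
\end{equation*}
For almost every $y_{<i}$, select (by a measurable selection theorem) an optimal $W_1$-coupling $\pi_i(y_{<i};dx_i,dy_i)$ of $\mu$ and $\nu_i(\cdot\mid y_{<i})$, and construct a joint law of $((X_i,Y_i))_{i=1}^n$ by drawing $(X_i,Y_i)$ conditionally on $Y_{<i}$ from $\pi_i(Y_{<i};\cdot)$. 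Then $Y\sim\nu$ by construction; a direct induction shows $X\sim\mu^{\otimes n}$, the point being that, conditionally on the entire past $(X_{<i},Y_{<i})$, the variable $X_i$ depends only on $Y_{<i}$ and has law $\mu$, hence is independent of $X_{<i}$.

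Splitting the cost coordinate-by-coordinate and inserting the $\alpha(\mathcal{T}_d)$ inequality on each conditional slice,
\begin{equation*}
W_1^{d^{\oplus n}}(\mu^{\otimes n},\nu)\leq\sum_{i=1}^n\mathbb{E}\,W_1\bigl(\mu,\nu_i(\cdot\mid Y_{<i})\bigr)\leq\sum_{i=1}^n\mathbb{E}\,\alpha^{-1}\bigl(h_i(Y_{<i})\bigr).
\end{equation*}
Since $\alpha$ is convex and increasing, $\alpha^{-1}$ is concave; applying Jensen's inequality twice (once to average the $n$ summands, once to pull the expectation inside) bounds the right-hand side by $n\,\alpha^{-1}\!\bigl(H(\nu\mid\mu^{\otimes n})/n\bigr)$, which rearranges to the announced $\alpha'(\mathcal{T}_{d^{\oplus n}})$ inequality. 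For the concentration statement, fix $Z$ with $L:=\|Z\|_{\mathrm{Lip}}<+\infty$ and $t>0$, set $A=\bigl\{Z\geq\int Z\,d\mu^{\otimes n}+t\bigr\}$ and $\nu_A=\mu^{\otimes n}(\cdot\cap A)/\mu^{\otimes n}(A)$, so that $H(\nu_A\mid\mu^{\otimes n})=-\log\mu^{\otimes n}(A)$; the Kantorovich-Rubinstein duality (\ref{kantorovich_rubinstein}) applied to $Z/L$ gives $W_1^{d^{\oplus n}}(\mu^{\otimes n},\nu_A)\geq t/L$, and substitution into the tensorized transport inequality produces $-\log\mu^{\otimes n}(A)\geq n\,\alpha(t/(nL))$, as claimed.

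The main technical friction lies in the gluing step: one must guarantee that the conditional optimal couplings $\pi_i$ can be selected jointly measurably in $y_{<i}$ (invoking the Kuratowski-Ryll-Nardzewski measurable selection theorem in the Polish setting) and carefully verify by induction on $i$ that the first marginal of the constructed process is indeed $\mu^{\otimes n}$. The remaining ingredients---chain rule, Jensen, Kantorovich-Rubinstein---are routine once the coupling is in place.
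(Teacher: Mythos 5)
Your argument is correct, and it supplies a proof the paper itself does not give: in the text this lemma is simply quoted from Gozlan and L\'eonard (\cite{large_dev_gozlan_leonard}, Chapter 6). Your route is the classical Marton coupling argument: disintegrate $\nu$ coordinate by coordinate, use the chain rule for relative entropy against the product reference measure, glue measurably selected conditional optimal couplings (your induction showing that the first marginal of the glued law is $\mu^{\otimes n}$ is sound), bound each conditional cost by $\alpha^{-1}(h_i)$ and apply Jensen to the concave (generalized) inverse. For the concentration statement you argue directly with the restricted measure $\nu_A$ and the easy direction of Kantorovich--Rubinstein, which indeed gives deviation above the mean; the paper's own toolkit would instead pass through the Bobkov--G\"{o}tze dual characterization (Proposition \ref{bobkov_gotze_characterization}, estimate (\ref{cond_laplace})) and a Chernoff bound, yielding the same inequality. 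Both derivations are legitimate; yours trades the dual characterization for the measurable-selection technicality you already flag, which is standard in the Polish setting.

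One point of bookkeeping: what your computation actually establishes is $n\,\alpha\bigl(W_1(\mu^{\otimes n},\nu)/n\bigr)\leq H(\nu\mid\mu^{\otimes n})$, i.e.\ the tensorized function is $t\mapsto n\,\alpha(t/n)$, not the $\alpha'(t)=\frac{1}{n}\alpha(nt)$ printed in the statement, so it does not literally ``rearrange to the announced $\alpha'$''. The printed form is a typo: it is incompatible with the concentration bound $\exp\bigl(-n\,\alpha(t/(n\|Z\|_{\mathrm{Lip}}))\bigr)$ asserted in the same lemma (test $\alpha(t)=t^2/C$), whereas $n\,\alpha(t/n)$ is exactly what that bound, the estimate (\ref{deviation_estimate_with_mean}), and the dependent-case analogue $\tilde{\alpha}(t)=n\,\alpha\bigl(t/(n(1+S))\bigr)$ of Theorem \ref{theorem_concentration_dependent_sequences} require. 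So you have proved the statement that is intended and actually used; just state the tensorized function as $n\,\alpha(t/n)$ rather than claiming agreement with the misprinted $\alpha'$.
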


Let $X_i$ be an i.i.d. sample of $\mu$.
Recalling that 

\begin{equation*}
W_1(L_n, \mu) = \sup_{f 1-\text{Lip}} \frac{1}{n} \sum_{i = 1}^n f(X_i) - \int f d \mu
\end{equation*}

and that

\begin{equation*}
 (x_1, \ldots, x_n) \mapsto \sup _{f 1-\text{Lip}} \frac{1}{n} \sum_{i = 1}^n f(x_i) - \int f d \mu
\end{equation*}

is $\frac{1}{n}$-Lipschitz w.r.t. the distance $d^{\oplus n}$ on $E^n$ (as a supremum of $\frac{1}{n}$-Lipschitz functions),
the following ensues :

\begin{equation} \label{deviation_estimate_with_mean}
\pr( W_1(L_n, \mu) \geq \mathbb{E}[W_1(L_n, \mu)] + t) \leq \exp - n \alpha(t). 
\end{equation}

Therefore, we are led to seek a control on $\mathbb{E}[W_1(L_n, \mu)]$. This is what we do in the next lemma.

\begin{lemma} \label{lemma_mean}
 
Let $a > 0$ be such that 
$E_{a, 1} = \int e^{a d(x, x_0)} \mu(d x) \leq 2$.

Let $\delta > 0$ and $K \in E$ be a compact subset containing $x_0$.
Let $\mathcal{N}_\delta$ denote the covering number of order 
$\delta$ for the set $\mathcal{F}_K$ of $1$-Lipschitz functions on $K$
vanishing at $x_0$ (endowed with the uniform distance).

Also define $\sigma : [0, +\infty ) \rightarrow [1, + \infty)$ as the inverse function of $x \mapsto x \ln x - x + 1$ on $[1, + \infty)$. 

The following holds :

\begin{equation*}
\mathbb{E}[ W_1(L_n, \mu)] \leq 2 \delta + 8 \frac{1}{a} \frac{1}{\sigma (\frac{1}{\mu(K^c)})} +  \Gamma(\mathcal{N}_\delta, n)
\end{equation*}

where

\begin{equation*}
 \Gamma(  \mathcal{N}_\delta, n) = \inf_{\lambda > 0} \frac{1}{\lambda} [ \log \mathcal{N}_\delta + n \alpha^*(\frac{\lambda}{n}) ].
\end{equation*}

\end{lemma}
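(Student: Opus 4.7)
The plan is to use Kantorovich--Rubinstein duality together with a covering argument to reduce $W_1(L_n,\mu) = \sup_{f \in \mathcal{F}}\bigl[\int f\,dL_n - \int f\,d\mu\bigr]$ to a maximum over a finite family of observables, then to apply a Chernoff bound built on the dual Laplace-transform version of the $\alpha(\mathcal{T}_d)$ inequality. The three resulting error terms will match the three summands in the claimed bound: $2\delta$ comes from the uniform approximation on $K$, the constant $8/(a\sigma(1/\mu(K^c)))$ comes from the tail on $K^c$, and $\Gamma(\mathcal{N}_\delta, n)$ comes from the finite maximum.

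First I would fix a $\delta$-net $g_1,\ldots,g_{\mathcal{N}_\delta}$ of $\mathcal{F}_K$ in sup-norm on $K$, and extend each $g_i$ to a 1-Lipschitz function $\bar g_i$ on $E$ vanishing at $x_0$ via the McShane formula $\bar g_i(x) = \inf_{y \in K}[g_i(y) + d(x,y)]$. For any $f \in \mathcal{F}$, picking $i$ with $\sup_K|f - g_i| \le \delta$, one has $|f - \bar g_i| \le \delta$ on $K$ (since $\bar g_i = g_i$ there), while on $K^c$ both $f$ and $\bar g_i$ vanish at $x_0 \in K$ and are 1-Lipschitz, so $|f - \bar g_i|(x) \le 2 d(x, x_0)$. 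Taking the supremum over $f$, expectations, and using $\mathbb{E}\int_{K^c} d(\cdot,x_0)\,dL_n = \int_{K^c} d(\cdot,x_0)\,d\mu$ yields
\[
\mathbb{E}[W_1(L_n,\mu)] \le \mathbb{E}\max_i\Bigl[\int \bar g_i\,dL_n - \int \bar g_i\,d\mu\Bigr] + 2\delta + 4\int_{K^c} d(\cdot,x_0)\,d\mu.
\]

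Next I would bound the tail integral by applying Young's inequality to the Legendre-conjugate pair $\phi(x) = x\log x - x + 1$, $\phi^{\ast}(y) = e^{y} - 1$: for every $u \ge 1$ and every $x$, $u \cdot a\,d(x,x_0) \le \phi(u) + \phi^{\ast}(a\,d(x,x_0))$. Integrating against $\mathbf{1}_{K^c}\,d\mu$ and using $\int_{K^c}(e^{a d}-1)\,d\mu \le E_{a,1} - \mu(K) \le 1$ (because $e^{a d} \ge 1$ on $K$) gives $\int_{K^c} d(\cdot,x_0)\,d\mu \le \frac{1}{au}\bigl[\phi(u)\mu(K^c) + 1\bigr]$. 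Choosing $u = \sigma(1/\mu(K^c))$ makes $\phi(u)\mu(K^c) = 1$ by definition of $\sigma$, so the tail contributes at most $2/(au) = 2/(a\sigma(1/\mu(K^c)))$, hence $8/(a\sigma(1/\mu(K^c)))$ to the full bound.

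For the finite maximum I would invoke the Gozlan--L\'eonard Laplace dual of $\alpha(\mathcal{T}_d)$: for any 1-Lipschitz $h$ with $\int h\,d\mu = 0$ and any $\lambda \ge 0$, $\int e^{\lambda h}\,d\mu \le e^{\alpha^{\ast}(\lambda)}$. Applied to $\bar g_i - \int \bar g_i\,d\mu$ and tensorized across the i.i.d.\ sample, this gives $\mathbb{E}\exp\bigl[\lambda(\int\bar g_i\,dL_n - \int\bar g_i\,d\mu)\bigr] \le e^{n\alpha^{\ast}(\lambda/n)}$, and a routine Chernoff/union bound over the $\mathcal{N}_\delta$ indices followed by optimization in $\lambda > 0$ produces $\Gamma(\mathcal{N}_\delta, n)$. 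The hard part is really just invoking the correct Laplace characterization of $\alpha(\mathcal{T}_d)$ for general $\alpha$; once that is granted, the rest is a routine net-plus-union argument combined with the elementary Orlicz--Young tail estimate.
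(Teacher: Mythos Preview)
Your proposal is correct and follows essentially the same route as the paper: Kantorovich--Rubinstein duality, a $\delta$-net on $\mathcal{F}_K$ with McShane extension, a tail bound on $\int_{K^c} d(\cdot,x_0)\,d\mu$ via the Young pair $(\phi,\phi^*)=(x\log x - x + 1,\,e^y-1)$, and a maximal inequality using the Laplace dual (\ref{cond_laplace}) of $\alpha(\mathcal{T}_d)$. The only cosmetic difference is that the paper packages the tail estimate as the Orlicz--H\"older inequality $\int d(\cdot,x_0)\mathbf{1}_{K^c}\,d\mu \le 2\|\mathbf{1}_{K^c}\|_\tau\|d(\cdot,x_0)\|_{\tau^*}$, whereas you apply Young's inequality pointwise; both give the same constant $2/(a\sigma(1/\mu(K^c)))$.

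One small slip: your inequality $\int_{K^c}(e^{ad}-1)\,d\mu \le E_{a,1} - \mu(K) \le 1$ is not quite right, since $E_{a,1}-\mu(K)$ can exceed $1$ when $\mu(K)<1$. The clean bound is $\int_{K^c}(e^{ad}-1)\,d\mu \le \int_E(e^{ad}-1)\,d\mu = E_{a,1}-1 \le 1$, which is all you need.
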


\begin{proof}
 
We denote by $\mathcal{F}$ 
the set of $1$-Lipschitz functions $f$ over $E$ such that $f(x_0) = 0$.
Let us denote

\begin{equation*}
 \Psi(f) = \int f d \mu - \int f d L_n,
\end{equation*}

we have for $f, g \in \mathcal{F}$ :

\begin{eqnarray*}
 | \Psi(f) - \Psi(g)| & \leq & \int |f - g| 1_K d \mu + \int |f - g| 1_K d L_n \\
{} & {} & + \int (|f| + |g|)1_{K^c} d \mu + \int (|f| + |g|) 1_{K^c} d L_n \\
{} & \leq & 2 \| f - g \|_{L^\infty(K)} + 2 \int d(x, x_0) 1_{K^c} d \mu + 2 \int d(x, x_0) 1_{K^c} d L_n
\end{eqnarray*}

When $f : E \rightarrow \mathbb{R}$ is a measurable function,
 denote by $f|_K$ its restriction to $K$. Notice that for every $g \in \mathcal{F}_K$, there
exists $f \in \mathcal{F}$ such that $f|_K = g$. Indeed, one may set

\begin{equation*}
 f(x) = 
\begin{cases}
 g(x) \text{ if } x \in K \\
\inf_{y \in K} f(y) + d(x, y) \text{ otherwise}
\end{cases}
\end{equation*}

and check that $f$ is $1$-Lipschitz over $E$.

By definition of $\mathcal{N}_\delta$, there exist
functions $g_1, \ldots, g_{\mathcal{N}_\delta} \in \mathcal{F}_K$ 
such that the balls of center $g_i$ and radius $\delta$ 
(for the uniform distance) cover $\mathcal{F}_K$. We can extend 
these functions to functions $f_i \in \mathcal{F}$ as noted above.

Consider $f \in \mathcal{F}$ and choose $f_i$ such that $|f - f_i| \leq \delta$ on $K$ :

\begin{eqnarray*}
 \Psi(f) & \leq &   | \Psi(f) - \Psi(f_i)| + \Psi(f_i) \\
{} & \leq & \Psi(f_i) + 2 \delta + 2 \int d(x, x_0) 1_{K^c} d \mu + 2 \int d(x, x_0) 1_{K^c} d L_n \\
{} & \leq & \max_{j = 1, \ldots, \mathcal{N}_\delta} \Psi(f_j) + 2 \delta + 2 \int d(x, x_0) 1_{K^c} d \mu + 2 \int d(x, x_0) 1_{K^c} d L_n
\end{eqnarray*}

The right-hand side in the last line does not depend on $f$, so it is also greater than $W_1( L_n, \mu) = \sup_\mathcal{F} \Psi(f)$.

We pass to expectations, and bound the terms on the right.
We use Orlicz-H\"{o}lder's inequality with the pair of conjugate Young functions

\begin{align*}
  \tau(x) & =  \begin{cases}
                 0 \text{ if } x \leq 1 \\
		 x \log x - x + 1 \text{ otherwise}
                \end{cases} \\
  \tau^*(x) & = e^x - 1
\end{align*}

(for definitions and a proof of Orlicz-H\"{o}lder's inequality, the reader may refer to \cite{rao1991theory}, Chapter 10).
We get

\begin{equation*}
 \int d(x, x_0) 1_{K^c} d \mu \leq 2 \| 1_{K^c} \|_\tau \| d(x, x_0) \|_{\tau^*}
\end{equation*}

where

\begin{equation*}
 \| 1_{K^c} \|_\tau = \inf \{ \theta > 0, \quad \int \tau \left( \frac{1_{K^c}}{\theta} \right) d \mu \leq 1 \}
\end{equation*}

and

\begin{equation*}
 \| d(x, x_0) \|_{\tau^*} = \inf \{ \theta > 0, \quad \int \left[ e^{\frac{d(x, x_0)}{\theta}} - 1 \right] d \mu \leq 1 \}.
\end{equation*}

It is easily seen that $ \| 1_{K^c} \|_\tau = 1/\sigma (1/\mu(K^c))$.
And we assumed that $a$ is such that $E_{a, 1} = \int \exp a d(x, x_0) d \mu \leq 2$, so
$\| d(x, x_0) \|_{\tau^*} \leq 1/a$.
Altogether, this yields

\begin{equation*}
 \int d(x, x_0) 1_{K^c} d \mu \leq 2 \frac{1}{a}\frac{1}{\sigma (\frac{1}{\mu(K^c)})}.
\end{equation*}

Also, if $X_1, \ldots, X_n$ are i.i.d. variables of law $\mu$,

\begin{equation*}
  \mathbb{E} [ \int d(x, x_0) 1_{K^c} d L_n ] = \mathbb{E} [ d(X_1, x_0) 1_{K^c}(X_1) ] \leq \frac{2}{a} \frac{1}{\sigma (1/\mu(K^c))}
\end{equation*}

as seen above. Putting this together yields the inequality

\begin{equation*}
 \mathbb{E} [W_1( L_n, \mu)] \leq 2 \delta + \frac{8}{a} \frac{1}{\sigma (1/\mu(K^c))} + \mathbb{E} [ \max_{j = 1, \ldots, \mathcal{N}_\delta} \Psi(f_j) ].
\end{equation*}

The remaining term can be bounded by a form of maximal inequality.
 First fix some $i$ and $\lambda > 0$ : we have

\begin{eqnarray*}
 \mathbb{E}[ \exp \lambda \Psi(f_i) ] & = & \mathbb{E} [ \exp \frac{\lambda}{n} \sum_{j = 1}^n (f(X_j) - \int f d \mu) ] \\
{} & = & ( \mathbb{E} [ \exp \frac{\lambda}{n} (f(X_1) - \int f d \mu) ] )^n \\
{} & \leq & e^ {n \alpha^\circledast(\lambda/n)}.
\end{eqnarray*}

In the last line, we have used estimate (\ref{cond_laplace}).
Using Jensen's inequality, we may then write

\begin{eqnarray*}
 \mathbb{E} [ \max_{j = 1, \ldots, \mathcal{N}_\delta} \Psi(f_j) ] & \leq & \frac{1}{\lambda} \log \mathbb{E} [ \max_{j = 1, \ldots, \mathcal{N}_\delta} \exp \lambda \Psi(f_j)  ] \\
{} & \leq & \frac{1}{\lambda} \log \sum_{j = 1}^{\mathcal{N}_\delta} \mathbb{E} [ \exp \lambda \Psi(f_j)] \\
{} & \leq & \frac{1}{\lambda} [ \log \mathcal{N}_\delta + n \alpha^*(\frac{\lambda}{n}) ]
\end{eqnarray*}

So minimizing in $\lambda$ we have 

\begin{equation*}
 \mathbb{E} [ \max_{j = 1, \ldots, \mathcal{N}_\delta} \Psi(f_j) ] \leq \Gamma(  \mathcal{N}_\delta, n).
\end{equation*}

Bringing it all together finishes the proof of the lemma.

\end{proof}

We can now finish the proof of Theorem \ref{main_thm}.

\begin{proof}
 
Come back to the deviation bound (\ref{deviation_estimate_with_mean}).
Choose $\delta = t/8$, and choose $K$ such that 

\begin{equation*}
\mu(K^c) \leq \left[ \frac{32}{a t} \log \frac{32}{a t} - \frac{32}{a t} + 1 \right]^{-1}.
\end{equation*}

 We thus have
$2 \delta + 8 [a \sigma (1/\mu(K^c)) ]^{-1} \leq t/2$, which implies

\begin{equation} \label{general_estimate_mean}
\mathbb{E}( W_1(L_n,\mu) \leq t/2 + \Gamma(\mathcal{C}_t, n) 
\end{equation}

 and so

\begin{equation*}
 \pr( W_1(L_n, \mu) \geq  t) \leq \exp -n \alpha(\frac{t}{2} -\Gamma(\mathcal{N}_\delta, n)),
\end{equation*}

with the convention $\alpha(y) = 0$ if $y < 0$.

\end{proof}

\section{Proof of Theorem \ref{variant}} \label{section_variant}

In this section, we provide a different approach to our result in the independent case.
As earlier we first aim to get a bound
 on the speed of convergence on the average $W_1$ distance between empirical and true measure.
The lemma below provides another way to obtain such an estimate.

\begin{lemma} \label{lemma_alternative}

   Let $\mu^k \in \Pro(E)$ be a finitely supported measure such that $|\text{Supp }\mu^k| \leq k$.
Let $D( \mu^k) = \text{Diam Supp } \mu^k$ be the diameter of $\text{Supp } \mu^k$. 
The following holds :

\begin{equation*}
 \mathbb{E} W_1(\mu, L_n) \leq 2 W_1 (\mu, \mu^k) + D(\mu^k) \sqrt{k / n}.
\end{equation*}

\end{lemma}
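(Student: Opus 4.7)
The plan is to introduce an intermediate coupling between samples drawn from $\mu$ and from the finitely supported approximation $\mu^k$, then combine three triangle-inequality contributions to $W_1(\mu, L_n)$.

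First, let $\pi$ be an optimal coupling realizing $W_1(\mu, \mu^k)$, and draw $(X_i, Y_i)_{1 \leq i \leq n}$ i.i.d. from $\pi$. By construction, $(X_i)$ is i.i.d. with law $\mu$ and $(Y_i)$ is i.i.d. with law $\mu^k$. Let $L_n = \frac{1}{n}\sum \delta_{X_i}$ and $L_n^k = \frac{1}{n}\sum \delta_{Y_i}$. The triangle inequality gives
\begin{equation*}
W_1(\mu, L_n) \leq W_1(\mu, \mu^k) + W_1(\mu^k, L_n^k) + W_1(L_n^k, L_n).
\end{equation*}
The diagonal coupling that pairs $Y_i$ with $X_i$ yields $W_1(L_n, L_n^k) \leq \frac{1}{n}\sum_i d(X_i, Y_i)$, hence $\mathbb{E} W_1(L_n, L_n^k) \leq \mathbb{E} d(X_1, Y_1) = W_1(\mu, \mu^k)$. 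This accounts for the factor $2 W_1(\mu, \mu^k)$ in the announced bound.

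The main work lies in estimating $\mathbb{E} W_1(\mu^k, L_n^k)$. Writing $\mu^k = \sum_{j=1}^k p_j \delta_{x_j}$ and $L_n^k = \sum_{j=1}^k \hat p_j \delta_{x_j}$ with $n \hat p_j \sim \mathrm{Binomial}(n, p_j)$, I would use Kantorovich-Rubinstein duality (\ref{kantorovich_rubinstein}) to write
\begin{equation*}
W_1(\mu^k, L_n^k) = \sup_{f \in \mathcal{F}} \sum_{j=1}^k f(x_j)(\hat p_j - p_j).
\end{equation*}
Since $\sum_j (\hat p_j - p_j) = 0$, one may normalize $f$ so that $f$ vanishes at some point in $\mathrm{Supp}\,\mu^k$, hence $|f(x_j)| \leq D(\mu^k)$ on the support. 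This bounds $W_1(\mu^k, L_n^k) \leq D(\mu^k) \sum_j |p_j - \hat p_j|$.

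Finally, I would combine the multinomial variance bound $\mathbb{E}|p_j - \hat p_j| \leq \sqrt{p_j(1 - p_j)/n} \leq \sqrt{p_j/n}$ (Cauchy-Schwarz) with a second application of Cauchy-Schwarz over the index $j$:
\begin{equation*}
\sum_{j=1}^k \sqrt{p_j/n} \leq \sqrt{k \sum_j p_j}\,/\sqrt{n} = \sqrt{k/n}.
\end{equation*}
This yields $\mathbb{E} W_1(\mu^k, L_n^k) \leq D(\mu^k)\sqrt{k/n}$, and assembling the three contributions completes the proof. The only delicate point is the double use of Cauchy-Schwarz in step four; everything else is a direct triangle-inequality argument on a carefully chosen coupling.
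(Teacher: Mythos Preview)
Your proof is correct and follows essentially the same route as the paper: the same optimal coupling, the same triangle inequality, and the same binomial-variance plus double Cauchy--Schwarz estimate for $\mathbb{E} W_1(\mu^k, L_n^k)$. The only cosmetic difference is that the paper bounds $W_1(\mu^k, L_n^k)$ via an explicit coupling (Lemma~\ref{lemma_measures_finite_support}) to get $D(\mu^k)(1-\sum_j p_j\wedge \hat p_j)\leq D(\mu^k)\sum_j|p_j-\hat p_j|$, whereas you obtain the same bound directly from Kantorovich--Rubinstein duality.
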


\begin{proof}
 
 Let $\pi_\text{opt}$ be an optimal
coupling of $\mu$ and $\mu^k$ (it exists : see e.g. Theorem 4.1 in \cite{optimal_transport_villani}), and let
$(X_i, Y_i)$, $1 \leq i \leq n$, be i.i.d. variables on $E \times E$ with common law $\pi_\text{opt}$.

Let $L_n = 1/n \sum_{i = 1}^n \delta_{X_i}$ and $L_n^k = 1/n \sum_{i = 1}^n \delta_{Y_i}$.
By the triangle inequality, we have

\begin{equation*}
 W_1(L_n, \mu) \leq W_1(L_n, L_n^k) + W_1(\mu, \mu^k) + W_1(\mu^k, L_n^k).
\end{equation*}

With our choice of coupling for $L_n$ and $L_n^k$ it is easily seen that

\begin{equation*}
 \mathbb{E} W_1 (L_n, L_n^k) \leq W_1 (\mu, \mu^k)
\end{equation*}

Let us take care of the last term. We use Lemma \ref{lemma_measures_finite_support} below to obtain that

\begin{align*}
 \mathbb{E} W_1 (L_n^k, \mu^k) & \leq D(\mu^k) \mathbb{E} \left( 1 - \sum_{i = 1}^k \mu^k(x_i) \wedge L_n^k(x_i) \right)\\
{} & = D(\mu^k) \sum_{i = 1}^k \mathbb{E} ( \mu^k(x_i) - \mu^k(x_i) \wedge L_n^k(x_i) ) \\
{} & \leq D(\mu^k) \sum_{i = 1}^k \mathbb{E} | \mu^k(x_i) - L_n^k(x_i)| \\
{} & \leq \frac{D(\mu^k)}{n} \sum_{i = 1}^k \sqrt{ \mathbb{E} |  n\mu^k(x_i) - n L_n^k(x_i)|^2 }.
\end{align*}
 
Observe that the variables $n L_n^k(x_i)$ follow binomial laws with parameter $\mu^k(x_i)$ and $n$. We get :

\begin{equation*}
 \mathbb{E} W_1(\mu^k, L_n^k) \leq \frac{D(\mu^k)}{n}  \sum_{i = 1}^k  \sqrt{n \mu^k(x_i) (1 - \mu^k(x_i))} \leq D(\mu^k) \sqrt{k/n}
\end{equation*}

(the last inequality being a consequence of the Cauchy-Schwarz inequality).

\end{proof}

\begin{lemma} \label{lemma_measures_finite_support} 
Let $\mu, \nu$ be probability measures with support in a finite metric space $\{x_1, \ldots, x_k\}$
of diameter bounded by $D$. Then 

\begin{equation*}
 W_1(\mu, \nu) \leq D \left(1 - \sum_{i = 1}^k \left( \mu(x_i) \wedge \nu(x_i) \right)\right).
\end{equation*}
 
\end{lemma}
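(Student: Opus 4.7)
The plan is to construct an explicit coupling of $\mu$ and $\nu$ that places as much mass as possible on the diagonal, so that the only contribution to the transport cost comes from the off-diagonal ``excess'' mass, whose total is exactly $1 - \sum_i (\mu(x_i) \wedge \nu(x_i))$. Bounding distances by the diameter $D$ on this off-diagonal part then gives the claim.

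Concretely, set $m_i = \mu(x_i) \wedge \nu(x_i)$ and
\begin{equation*}
 \mu_+(x_i) = \mu(x_i) - m_i = (\mu(x_i) - \nu(x_i))_+, \quad \nu_+(x_i) = \nu(x_i) - m_i = (\nu(x_i) - \mu(x_i))_+.
\end{equation*}
Both $\mu_+$ and $\nu_+$ are nonnegative measures with common total mass $\alpha := 1 - \sum_i m_i$, and by construction they have disjoint supports (at each $x_i$ at most one of $\mu_+(x_i)$, $\nu_+(x_i)$ is nonzero). Define the coupling
\begin{equation*}
 \pi = \sum_{i = 1}^k m_i\, \delta_{(x_i, x_i)} + \frac{1}{\alpha}\, \mu_+ \otimes \nu_+
\end{equation*}
(with the convention that the second term is omitted if $\alpha = 0$, in which case $\mu = \nu$ and the inequality is trivial).

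A direct check confirms that $\pi$ has marginals $\mu$ and $\nu$: summing the $x$-marginal at $x_i$ gives $m_i + \mu_+(x_i) \cdot \frac{1}{\alpha} \sum_j \nu_+(x_j) = m_i + \mu_+(x_i) = \mu(x_i)$, and similarly for $\nu$. The diagonal part of $\pi$ contributes zero to the transport cost, and since $\mu_+$ and $\nu_+$ have disjoint supports, the off-diagonal part is supported on pairs $(x_i, x_j)$ with $i \neq j$, hence on pairs of distance at most $D$. Therefore
\begin{equation*}
 W_1(\mu, \nu) \leq \int d(x, y)\, d\pi(x, y) \leq D \cdot \frac{1}{\alpha}(\mu_+ \otimes \nu_+)(E \times E) = D \alpha,
\end{equation*}
which is the desired bound. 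There is no real obstacle here; the only point requiring a line of verification is that $\mu_+$ and $\nu_+$ have disjoint supports, which is what guarantees that the product measure piece of $\pi$ contributes no mass to the diagonal and hence exactly realizes the off-diagonal budget $\alpha$.
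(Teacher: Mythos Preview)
Your proof is correct and takes essentially the same approach as the paper: both construct the maximal coupling that keeps the common mass $\sum_i \mu(x_i)\wedge\nu(x_i)$ on the diagonal and couples the remaining excesses via an independent product, then bound the cost of the excess part by $D$ times its total mass. The paper phrases the same coupling via random variables (a Bernoulli mixture of a diagonal draw and an independent pair $(X_1,Y_1)\sim\mu_1\otimes\nu_1$), whereas you write the coupling measure directly; the resulting $\pi$ is identical. One minor remark: the disjoint-supports observation, while true, is not actually needed for the bound, since $d(x,y)\leq D$ holds for all pairs (and equals $0$ on the diagonal), so $\int d\,d\pi \leq D\cdot\alpha$ follows regardless.
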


\begin{proof}
 We build a coupling of $\mu$ and $\nu$ that leaves as much mass in place as possible, 
in the following fashion : 
set $f(x_i) = \mu(x_i) \wedge \nu(x_i)$ and $\lambda = \sum_{i _ 1}^k f_i$.
Set $q(x_i) = f_i/ \lambda$, and define the measures

\begin{align*}
 \mu_1 & = \frac{1}{1- \lambda} (\mu - f) \\
\nu_1 & = \frac{1}{1- \lambda} (\nu - f).
\end{align*}

Finally, build independent random variables
 $X_1 \sim \mu_1$, $Y_1 \sim \nu_1$, $Z \sim q$ and $B$ with Bernoulli law of parameter $\lambda$.
Define

\begin{equation*}
 X = (1-B)X_1 + BZ, \quad Y = (1-B)Y_1 + BZ.
\end{equation*}

It is an easy check that $X \sim \mu$, $Y \sim \nu$.

Thus we have the bound

\begin{align*}
 W_1 (\mu, \nu) & \leq \mathbb{E} |X - Y| \\
{} & = (1- \lambda) \mathbb{E}|X_1 - Y_1| \leq D (1 - \lambda)
\end{align*}

and this concludes the proof.

\end{proof}

\begin{proof}[Proof of Theorem \ref{variant}]

As stated earlier, we have the concentration bound

\begin{equation*}
 \pr( W_p(L_n, \mu) \geq t + \mathbb{E} W_p(L_n, \mu) ) \leq e^{-nt^2 / C}.
\end{equation*}

The proof is concluded by arguments similar to the ones used before, calling upon Lemma \ref{lemma_alternative} to bound the mean.

\end{proof}

\section{Proofs in the dependent case} \label{dependent_case}

Before proving Theorem \ref{theorem_markov}, we establish a more general result in the spirit of Lemma \ref{lemma_mean}.

As earlier, the first ingredient we need to apply our strategy of proof is
 a tensorization property for the transport-entropy inequalities 
in the case of non-independent sequences. To this end, we restate results from 
\cite{djellout_guillin_wu}, where only $\bt_1$
inequalities were investigated, in our framework.

For $x = (x_1, \ldots, x_n) \in E^n$, and $1 \leq i \leq n$, denote $x^i = (x_1, \ldots, x_i)$.
Endow $E^n$ with the distance $d_1(x, y) = \sum_{i _ 1}^n d(x_i, y_i)$.
Let $\nu \in \Pro(E^n)$, the notation $\nu^i(d x_1, \ldots, d x_i)$ stands for the marginal measure
on $E^i$, and $\nu^i(. | x^{i-1})$ stands for the regular 
conditional law of $x_i$ knowing $x^{i-1}$, or in other words the conditional
disintegration of $\nu^i$ with respect to $\nu^{i-1}$ at $x^{I-1}$(its existence is assumed throughout).

The next theorem is a slight extension of Theorem 2.11 in \cite{djellout_guillin_wu}.
 Its proof can be adapted without difficulty, and we omit it here.

\begin{theorem} \label{theorem_concentration_dependent_sequences}

 Let $\nu \in \Pro(E^n)$ be a probability measure such that

\begin{enumerate}
 \item For all $i \geq 1$ and all  $x^{i-1} \in E^{i-1}$ ($E^0 = \{x_0\})$, $\nu^i(. |x^{i-1})$
satisfies a $\alpha(\mathcal{T}_d)$ inequality, and
 \item 
There exists $S > 0$ such that for every $1$-Lipschitz function

\begin{equation*}
 f : (x_{k+1}, \ldots, x_n) \rightarrow f(x_{k+1}, \ldots, x_n),
\end{equation*}

for all $x^{k-1} \in E^{k-1}$ and $x_k, y_k \in E$, we have

\begin{equation} \label{contraction_property}
\begin{split}
 | \mathbb{E}_\nu\left( f(X_{k+1}, \ldots, X_n) | X^k = (x^{k-1}, x_k) \right) - \\ 
\mathbb{E}_\nu \left( f(X_{k+1}, \ldots, X_n) | X^k = (x^{k-1}, y_k) \right) | \\
\leq S d(x_k, y_k)
\end{split}
\end{equation}
\end{enumerate}

Then $\nu$ verifies the transportation inequality $\tilde{\alpha}( \mathcal{T}_d) \leq H$
 with 

\begin{equation*}
 \tilde{\alpha}(t) = n \alpha(\frac{1}{n (1+S)}  t).
\end{equation*}
\end{theorem}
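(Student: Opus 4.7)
My plan is to extend the argument of Theorem~2.11 in \cite{djellout_guillin_wu}, which handles the quadratic case $\alpha(t) = t^2/C$, by replacing the squared-exponential control with its general $\alpha$-analogue. The key tool is the Bobkov--G\"{o}tze-style dual equivalence already underlying (\ref{cond_laplace}): a probability $\eta$ on $(E,d)$ satisfies $\alpha(\mathcal{T}_d)$ if and only if, for every $1$-Lipschitz $f:E\to\R$ and every $\lambda\geq 0$,
\begin{equation*}
\int e^{\lambda f}\,d\eta \leq \exp\!\left(\lambda \int f\,d\eta + \alpha^{\circledast}(\lambda)\right).
\end{equation*}
Given a $1$-Lipschitz $F:E^n\to\R$ w.r.t.\ $d_1$, I will control $\int e^{\lambda F}\,d\nu$ by integrating out the coordinates in reverse order $x_n, x_{n-1}, \ldots, x_1$, applying hypothesis~(1) to the conditional $\nu^k(\cdot\mid x^{k-1})$ at each step.

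Set $G_n = F$ and, for $k<n$, $G_k(x^k) = \mathbb{E}_\nu[F(X_1, \ldots, X_n)\mid X^k = x^k]$. The first step is to verify that $x_k \mapsto G_k(x^{k-1},x_k)$ is $(1+S)$-Lipschitz. I will show this by splitting $G_k(x^{k-1},a) - G_k(x^{k-1},b)$ into: (i)~the conditional expectation of $F(x^{k-1},a,X_{k+1},\ldots,X_n) - F(x^{k-1},b,X_{k+1},\ldots,X_n)$ given $X^k = (x^{k-1},a)$, bounded by $d(a,b)$ because $F$ is $1$-Lipschitz in its $k$-th coordinate; and (ii)~the difference of expectations of the $1$-Lipschitz function $g(y_{k+1},\ldots,y_n) = F(x^{k-1},b,y_{k+1},\ldots,y_n)$ under the conditional laws given $X^k = (x^{k-1},a)$ and $X^k = (x^{k-1},b)$, bounded by $S\,d(a,b)$ by hypothesis~(\ref{contraction_property}).

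With this Lipschitz estimate the induction is mechanical. At step $k$, the function $x_k \mapsto \lambda G_k(x^{k-1},x_k)$ is $\lambda(1+S)$-Lipschitz when $k<n$ and $\lambda$-Lipschitz when $k=n$, so the conditional Laplace bound above picks up a factor $\exp(\alpha^{\circledast}(\lambda(1+S)))$, respectively $\exp(\alpha^{\circledast}(\lambda))$, and replaces $G_k$ by $G_{k-1}$ inside the exponential. Iterating from $k=n$ down to $k=1$ and using monotonicity of $\alpha^{\circledast}$ to absorb the $k=n$ factor, I obtain
\begin{equation*}
\int e^{\lambda F}\,d\nu \leq \exp\!\left(\lambda \int F\,d\nu + n\,\alpha^{\circledast}(\lambda(1+S))\right).
\end{equation*}
The converse direction of Bobkov--G\"{o}tze then delivers the announced transportation inequality for $\nu$ on $(E^n, d_1)$ with rate $\tilde\alpha$ whose monotone conjugate is $\tilde\alpha^{\circledast}(\lambda) = n\,\alpha^{\circledast}(\lambda(1+S))$. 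A short Legendre computation, substituting $s = (1+S)\lambda$ in $\tilde\alpha(t) = \sup_{\lambda \geq 0}(\lambda t - \tilde\alpha^{\circledast}(\lambda))$, produces $\tilde\alpha(t) = n\,\alpha(t/(n(1+S)))$ as claimed.

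The main care needed is in the converse ``Laplace $\Rightarrow$ transport'' half of the Bobkov--G\"{o}tze equivalence at the level of generality of a convex, increasing, left-continuous $\alpha$ with $\alpha(0)=0$; this will combine the Donsker--Varadhan variational formula for entropy with the Kantorovich--Rubinstein formula (\ref{kantorovich_rubinstein}) and the Legendre biduality of $\alpha$. Once this is in place the rest is bookkeeping for the $G_k$, and the decomposition above shows that the direct Lipschitz contribution from $F$ and the contraction contribution from (\ref{contraction_property}) add without cross terms.
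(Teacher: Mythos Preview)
Your proposal is correct and follows exactly the route the paper indicates: the paper omits the proof, stating only that it is a straightforward adaptation of Theorem~2.11 in \cite{djellout_guillin_wu}, and your argument---integrating out coordinates one at a time via the Bobkov--G\"{o}tze Laplace bound, using the decomposition (i)+(ii) to show each $G_k$ is $(1+S)$-Lipschitz in $x_k$, and then inverting the Legendre transform---is precisely that adaptation. Note that the converse Bobkov--G\"{o}tze direction you flag as requiring care is already available in the paper as Proposition~\ref{bobkov_gotze_characterization}, so you can simply cite it rather than re-derive it.
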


In the case of a homogeneous Markov chain $(X_n)_{n \in \N}$ with transition kernel $P(x, dy)$, 
the next
 proposition gives sufficient conditions 
on the transition probabilities for
the laws of the variables $X_n$ and the path-level law
of $(X_1, \ldots, X_n)$ to satisfy some transportation inequalities. 
Once again the statement and its proof are
 adaptations of the corresponding Proposition 2.10 of \cite{djellout_guillin_wu}.

\begin{prop} \label{marginals_markov}
 Let $P(x, dy)$ be a Markov kernel such that

\begin{enumerate}
 \item \label{cond_a}the transition measures $P(x, .)$ satisfies $\alpha(\mathcal{T}_d) \leq H$ for all $x \in E$, and
 \item \label{cond_b} $W_1(P(x, .), P(y, .) ) \leq r d(x, y)$ for all $x, y \in E$ and some $r < 1$. 
\end{enumerate}

Then there exists a unique invariant probability measure $\pi$ for the Markov chain associated to $P$, and the measures
$P^n(x, .)$ and $\pi$ satisfy $\alpha'(\mathcal{T}_d) \leq H$, where $\alpha'(t) = \frac{1}{1 - r} \alpha( (1 - r) t)$.

Moreover, under these hypotheses, the conditions of Theorem \ref{theorem_concentration_dependent_sequences} are verified
with $S = \frac{r}{1-r}$ so that the law $P_n$ of the $n$-uple $(X_1, \ldots, X_n)$ under $X_0 = x_0 \in E$ 
verifies $\tilde{\alpha}(\mathcal{T}_d) \leq H$ where $\tilde{\alpha}(t) = n \alpha(\frac{1 - r}{n}  t)$.

\end{prop}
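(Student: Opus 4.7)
\textbf{Existence and uniqueness of $\pi$.} The assumption $W_1(P(x,\cdot),P(y,\cdot))\leq r\,d(x,y)$ implies by averaging with an optimal coupling that $W_1(\mu P,\nu P)\leq r\,W_1(\mu,\nu)$ for all $\mu,\nu\in\Pro_1(E)$. Since $\Pro_1(E)$ equipped with $W_1$ is a complete metric space (condition (\ref{cond_a}) guarantees that $P(x,\cdot)\in\Pro_1(E)$, so the image of $P$ lies inside a complete subspace of probability measures with finite first moment), Banach's fixed point theorem provides a unique invariant probability measure~$\pi$, and $W_1(P^n(x,\cdot),\pi)\to 0$ geometrically in $n$.

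\textbf{Transport inequality for $P^n(x,\cdot)$.} I plan to iterate through one step at a time. Given $\nu\ll P^n(x,\cdot)$ with density $h$, define the pulled-back densities $g_{k+1}=h$ and $g_k=P g_{k+1}$ for $k<n$, and set $\tilde\mu_k=g_k\cdot P^k(x,\cdot)$, $Q^{(k)}_y(dz)=\frac{g_{k+1}(z)}{g_k(y)}P(y,dz)$, so that $\tilde\mu_0=\delta_x$ and $\tilde\mu_n=\nu$. A direct computation of the Radon–Nikodym derivative and the chain rule show
\begin{equation*}
H_k:=\mathbb{E}_{\tilde\mu_k}H(Q^{(k)}_Y\,|\,P(Y,\cdot))=H(\tilde\mu_{k+1}\,|\,P^{k+1}(x,\cdot))-H(\tilde\mu_k\,|\,P^k(x,\cdot)),
\end{equation*}
so $\sum_{k=0}^{n-1}H_k=H(\nu\,|\,P^n(x,\cdot))$. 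For the Wasserstein side, the triangle inequality with the intermediate measure $\tilde\mu_k P$ and the $W_1$-contractivity of $P$ give
\begin{equation*}
W_1(\tilde\mu_{k+1},P^{k+1}(x,\cdot))\leq \mathbb{E}_{\tilde\mu_k}W_1(Q^{(k)}_Y,P(Y,\cdot))+r\,W_1(\tilde\mu_k,P^k(x,\cdot)),
\end{equation*}
and unrolling from $W_1(\tilde\mu_0,\delta_x)=0$ produces $W_1(\nu,P^n(x,\cdot))\leq\sum_{k=0}^{n-1}r^{n-1-k}s_k$ where $s_k=\alpha^{-1}(H_k)$ (using $\alpha(W_1(Q^{(k)}_Y,P(Y,\cdot)))\leq H(Q^{(k)}_Y\,|\,P(Y,\cdot))$ pointwise and Jensen with the concave $\alpha^{-1}$).

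\textbf{Closing the inequality.} The weights $\lambda_k=(1-r)r^{n-1-k}$ satisfy $\sum\lambda_k=1-r^n\leq 1$, so $(1-r)W_1(\nu,P^n(x,\cdot))\leq(1-r^n)\sum(\lambda_k/(1-r^n))s_k$. Using convexity of $\alpha$ with $\alpha(0)=0$ (which gives $\alpha(tu)\leq t\alpha(u)$ for $t\in[0,1]$) followed by Jensen:
\begin{equation*}
\alpha((1-r)W_1(\nu,P^n(x,\cdot)))\leq (1-r^n)\sum \tfrac{\lambda_k}{1-r^n}\alpha(s_k)=(1-r)\sum r^{n-1-k}H_k\leq(1-r)H(\nu\,|\,P^n(x,\cdot)).
\end{equation*}
Dividing by $1-r$ yields the claimed $\alpha'(\mathcal{T}_d)\leq H$ inequality for $P^n(x,\cdot)$, and lower semicontinuity of the entropy together with left-continuity of $\alpha'$ transfers the inequality to the $W_1$-limit $\pi$.

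\textbf{Joint law via Theorem \ref{theorem_concentration_dependent_sequences}.} Condition (1) is immediate: the conditional $P_n^i(\cdot\mid x^{i-1})$ equals $P(x_{i-1},\cdot)$ by the Markov property and so satisfies $\alpha(\mathcal{T}_d)\leq H$ by hypothesis. For condition (2), a $1$-Lipschitz functional $f$ on $(E^{n-k},d^{\oplus(n-k)})$ evaluated under two chains started at $x_k$ and $y_k$ can be compared by Kantorovich–Rubinstein; building a Markovian coupling that at each step uses an optimal $W_1$-coupling of $P(\cdot,\cdot)$ gives $\mathbb{E}\,d(X_{k+j},Y_{k+j})\leq r^j d(x_k,y_k)$, hence
\begin{equation*}
|\mathbb{E}_{x_k}f-\mathbb{E}_{y_k}f|\leq \sum_{j=1}^{n-k}r^j d(x_k,y_k)\leq \tfrac{r}{1-r}d(x_k,y_k),
\end{equation*}
giving $S=r/(1-r)$. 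Plugging into Theorem \ref{theorem_concentration_dependent_sequences} yields $\tilde\alpha(t)=n\alpha\!\left(\tfrac{t}{n(1+S)}\right)=n\alpha\!\left(\tfrac{(1-r)t}{n}\right)$, as claimed.

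\textbf{Main obstacle.} The delicate part is the marginal inequality in the middle paragraph: balancing the $r^{n-1-k}$-weighted $W_1$ recursion against the additively decomposed entropy $\sum H_k$ so that the correct rescaling $\alpha'(t)=\frac{1}{1-r}\alpha((1-r)t)$ emerges. Getting the exponents right requires using both $\alpha(tu)\leq t\alpha(u)$ (from convexity and $\alpha(0)=0$) and Jensen, in that order, together with $1-r^n\leq 1$.
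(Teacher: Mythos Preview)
Your argument is correct (with the evident typo ``$g_{k+1}=h$'' read as ``$g_n=h$''), but the route for the marginal transport inequality differs from the paper's. The paper works on the \emph{dual} side via the Bobkov--G\"otze characterisation (Proposition~\ref{bobkov_gotze_characterization}): from $\|P^kf\|_{\mathrm{Lip}}\le r^k$ one iterates $P(e^{sf})\le e^{sPf+\alpha^\circledast(s)}$ to obtain $P^n(e^{sf})\le\exp\bigl(sP^nf+\sum_{k\ge 0}\alpha^\circledast(r^ks)\bigr)$, bounds the geometric sum by $\tfrac{1}{1-r}\alpha^\circledast(s)$ using convexity of $\alpha^\circledast$ and $\alpha^\circledast(0)=0$, and then simply identifies $\tfrac{1}{1-r}\alpha^\circledast$ as the monotone conjugate of $\alpha'$. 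Your primal approach---telescoping entropy via the chain rule and running a $W_1$-recursion with weights $r^{n-1-k}$---is equally valid and arguably more self-contained (it does not invoke the dual characterisation), but it trades a three-line Laplace iteration for the convexity/Jensen balancing act you rightly identify as the delicate point. For condition~(2) of Theorem~\ref{theorem_concentration_dependent_sequences}, the paper proceeds by induction on the number of integrations to show that $x_1\mapsto\int f(x_1,\ldots,x_k)P(x_{k-1},dx_k)\cdots P(x_1,dx_2)$ is $\tfrac{1}{1-r}$-Lipschitz; your synchronous-coupling argument is shorter and yields the same constant $S=r/(1-r)$.
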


\begin{proof}
 The first claim is obtained exactly as in the proof of Proposition 2.10 in \cite{djellout_guillin_wu}, observing that
the contraction condition \ref{cond_b} is equivalent to

\begin{equation*}
 W_1(\nu_1 P, \nu_2 P) \leq r W_1( \nu_1, \nu_2) \quad \text{for all } \nu_1, \nu_2 \in \Pro_1(E)
\end{equation*}

and also to

\begin{equation*}
 \| Pf \|_\text{Lip} \leq r \|f\|_\text{Lip} \quad \text{for all } f.
\end{equation*}

This entails that whenever $f$ is $1$-Lipschitz, $P^n f$ is $r^n$-Lipschitz. Now, by condition \ref{cond_a}, we have

\begin{equation*}
\begin{aligned}
P^n(e^{s f}) & \leq P^{n-1}\left( \exp \left( sPf + \alpha^\circledast(s) \right) \right)   \\
{} & \leq P^{n-2} \left(  \left(  s P^2 f + \alpha^\circledast(s) + \alpha^\circledast(rs) \right) \right) \\
{} & \leq \ldots \\
{} & \leq \exp \left( \left( s P^n f + \alpha^\circledast (s) + \ldots + \alpha^\circledast (r^n s) \right) \right).
\end{aligned}
\end{equation*}

As $\alpha^\circledast$ is convex and vanishes at $0$, we have $\alpha^\circledast (r t) \leq r \alpha^\circledast (t)$ for
all $t \geq 0$. Thus,

\begin{equation*}
 P^n(e^{s f} \leq \exp \left( s P^n f + \sum_{k = 0}^{ + \infty} r^k \alpha^\circledast (s) \right) = \exp \left( s P^n f + \frac{1}{1 - r} \alpha^\circledast (s) \right)
\end{equation*}

It remains only to check that $\frac{1}{1 - r} \alpha^\circledast$ is the monotone conjugate of $\alpha'$ and to invoke Proposition \ref{bobkov_gotze_characterization}.

Moving on to the final claim, since the process is homogeneous, to ensure that (\ref{contraction_property}) is satisfied,
 we need only show that for all $k \geq 1$, for
all $f : E^k \rightarrow \R$ $1$-Lipschitz w.r.t. $d_1$, the function

\begin{equation*}
x \mapsto \mathbb{E} \left[f(X_1, \ldots, X_k)|X_0 = x \right]
\end{equation*}

is $\frac{r}{1 - r}$-Lipschitz. 
We show the following : if $g : E^2 \rightarrow \R$ is 
such that for all $x_1, x_2 \in E$ the functions
$g(., x_2)$, resp. $g(x_1, .)$, are $1$-Lipschitz, resp. $\lambda$-Lipschitz,
 then the function

\begin{equation*}
 x_1 \mapsto \int g(x_1, x_2) P(x_1, d x_2)
\end{equation*}

is $(1 + \lambda r)$-Lipschitz.
Indeed, 

\begin{align*}
 | \int g(x_1, x_2) & P(x_1, d x_2) - \int g(y_1, x_2) P(y_1, d x_2) | \\
{} & \leq  \int |g(x_1, x_2)  - g(y_1, x_2) |P(x_1, d x_2) \\
{} & \quad + |\int g(y_1, x_2) (P(x_1, d x_2) -  P(y_1, d x_2)) | \\
{} & \leq (1 + \lambda r) d(x_1, y_1).
\end{align*}

It follows easily by induction that the function

\begin{equation*}
f_k : x_1 \mapsto \int f(x_1, \ldots, x_k) P(x_{k-1}, d x_k) \ldots P(x_1, d x_2)
\end{equation*}

has Lipschitz norm bounded by $1 + r + \ldots r^k \leq \frac{1}{1 - r}$.
Hence the function $x \mapsto \int  f_k(x_1) P(x, d x_1)$ has Lipschitz norm bounded by $\frac{r}{1 - r}$.
But this function is precisely

\begin{equation*}
x \mapsto \mathbb{E} \left[f(X_1, \ldots, X_k)|X_0 = x \right]
\end{equation*}

and the proof is complete.
\end{proof}

We are in position to prove the analogue of Lemma \ref{lemma_mean} in the Markov case.

\begin{lemma}
Consider the Markov chain associated to a transition kernel $P$ as in Proposition \ref{marginals_markov}.
Let $P_n$ denote the law of the Markov path $(X_1, \ldots, X_n)$ associated to $P$ under $X_0 = x_0$.
Introduce the averaged occupation measure $\pi_n = \mathbb{E}_{P_n} (L_n)$ and the invariant measure $\pi$.
Let $m_1 = \int d(x, x_0) \pi(dx)$.

Suppose that there exists $a > 0$ such that for all $i \geq 1$
$E_{a, i} = \int e^{a d(x, x_0)} P^i(d x) \leq 2$.

Let $\delta > 0$ and $K \in E$ be a compact subset containing $x_0$.
Let $\mathcal{N}_\delta$ denote the metric entropy of order 
$\delta$ for the set $\mathcal{F}_K$ of $1$-Lipschitz functions on $K$
vanishing at $x_0$ (endowed with the uniform distance).
Also define $\sigma : [0, +\infty ) \rightarrow [1, + \infty)$ as the inverse function of $x \mapsto x \ln x - x + 1$ on $[1, + \infty)$. 

The following holds :

\begin{align*}
\mathbb{E}_{P_n}[ W_1(L_n, \pi_n)]  & \leq 2 \delta +  \frac{8}{a} \frac{1}{n}\sum_{i = 1}^{n}\frac{1}{\sigma (\frac{1}{P^i(K^c)})} +  \Gamma(\mathcal{N}_\delta, n) \\
W_1( \pi_n, \pi) & \leq \frac{m_1}{n(1 - r)}.
\end{align*}
where $\Gamma(\mathcal{N}_\delta, n) = \inf_{\lambda > 0} \frac{1}{\lambda} \left[ \log \mathcal{N}_\delta + n \alpha^\circledast (\frac{\lambda}{n(1 - r)}) \right] $ 

\end{lemma}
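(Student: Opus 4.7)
The plan is to imitate the scheme of Lemma \ref{lemma_mean}, replacing the product measure $\mu^{\otimes n}$ by the path measure $P_n$ and using the path-level transport inequality supplied by Proposition \ref{marginals_markov}. That proposition tells us that $P_n$ satisfies a $\tilde{\alpha}(\mathcal{T}_{d_1})$ inequality with $\tilde{\alpha}(t) = n\alpha((1-r)t/n)$, whose monotone conjugate is $\tilde{\alpha}^\circledast(s) = n\alpha^\circledast(s/(1-r))$. By the Bobkov--G\"otze dual characterization (\ref{cond_laplace}), this gives, for every $1$-Lipschitz $F : (E^n, d_1) \to \R$,
\begin{equation*}
\mathbb{E}_{P_n}\bigl[\exp s (F - \mathbb{E}_{P_n} F)\bigr] \leq \exp \tilde{\alpha}^\circledast(s).
\end{equation*}

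For the first inequality, I would copy the decomposition from the proof of Lemma \ref{lemma_mean}. Write $\Psi(f) = \int f \, dL_n - \int f \, d\pi_n$ so that $\Psi$ has mean zero; note that $n\Psi(f)$, as a function of $(X_1, \ldots, X_n)$, is $1$-Lipschitz w.r.t. $d_1$ when $f$ is $1$-Lipschitz. Cover the restrictions to $K$ of the $1$-Lipschitz functions vanishing at $x_0$ by $\mathcal{N}_\delta$ uniform balls of radius $\delta$ centered at $g_1,\ldots,g_{\mathcal{N}_\delta}$, extend each $g_i$ to a $1$-Lipschitz function $f_i$ on $E$ by the inf-convolution $f_i(x) = \inf_{y\in K}(g_i(y) + d(x,y))$, and reproduce the pointwise estimate
\begin{equation*}
\Psi(f) \leq \max_{j} \Psi(f_j) + 2\delta + 2\int d(x,x_0)\mathbf{1}_{K^c} d\pi_n + 2\int d(x,x_0)\mathbf{1}_{K^c} dL_n.
\end{equation*}
Taking expectations, the two tail integrals are handled by Orlicz--H\"older applied to each marginal $P^i(x_0,\cdot)$: since $E_{a,i} \leq 2$ for every $i$, one obtains
\begin{equation*}
\int d(x,x_0)\mathbf{1}_{K^c} d\pi_n + \mathbb{E}\int d(x,x_0)\mathbf{1}_{K^c} dL_n \;\leq\; \frac{4}{a}\,\frac{1}{n}\sum_{i=1}^n \frac{1}{\sigma(1/P^i(K^c))}.
\end{equation*}
For the maximum, the Laplace bound above applied with $s = \lambda/n$ to $F = n\Psi(f_j)$ yields $\mathbb{E}_{P_n}[\exp\lambda \Psi(f_j)] \leq \exp[n\alpha^\circledast(\lambda/(n(1-r)))]$, so a Jensen/union-bound argument exactly as in Lemma \ref{lemma_mean} gives $\mathbb{E}[\max_j \Psi(f_j)] \leq \Gamma(\mathcal{N}_\delta, n)$ with the $\Gamma$ defined in the statement. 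Assembling the three pieces yields the first bound.

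For the second inequality, the argument is purely geometric and short. The averaged measure equals $\pi_n = \frac{1}{n}\sum_{i=1}^n \delta_{x_0} P^i$, and the invariant measure satisfies $\pi = \pi P^i$ for every $i$. By convexity of $W_1$ and iterated application of the contraction hypothesis (2) of Theorem \ref{theorem_markov}, namely $W_1(\nu_1 P, \nu_2 P) \leq r\,W_1(\nu_1,\nu_2)$,
\begin{equation*}
W_1(\pi_n, \pi) \leq \frac{1}{n}\sum_{i=1}^n W_1(\delta_{x_0} P^i, \pi P^i) \leq \frac{1}{n}\sum_{i=1}^n r^i\, W_1(\delta_{x_0}, \pi) \leq \frac{m_1}{n(1-r)},
\end{equation*}
using $W_1(\delta_{x_0}, \pi) = \int d(x,x_0)\,\pi(dx) = m_1$ and $\sum_{i\geq 1} r^i \leq 1/(1-r)$.

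The only real subtlety is bookkeeping: one has to keep track of the factor $(1-r)$ introduced by the path-level tensorization, so that the single $\alpha^\circledast$ inside $\Gamma(\mathcal{N}_\delta,n)$ is indeed evaluated at $\lambda/(n(1-r))$ and not at $\lambda/n$. Everything else (the covering-number truncation, the extension of Lipschitz functions from $K$ to $E$, the Orlicz--H\"older estimate for the tail) transfers verbatim from the independent case, because these steps only use the one-dimensional marginal structure and the fact that $\pi_n$ has been defined precisely as the mean of $L_n$ under $P_n$.
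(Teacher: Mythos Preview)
Your proposal is correct and follows essentially the same approach as the paper: the same truncation/covering decomposition from Lemma \ref{lemma_mean}, the same Orlicz--H\"older bound on the tail terms applied to each marginal $P^i$, the path-level $\tilde{\alpha}(\mathcal{T}_{d_1})$ inequality from Proposition \ref{marginals_markov} to obtain the Laplace bound on $\Psi(f_j)$, and the convexity-plus-contraction argument for $W_1(\pi_n,\pi)$. The only cosmetic difference is that you compute $\tilde{\alpha}^\circledast$ explicitly and rescale via $s=\lambda/n$, whereas the paper applies (\ref{cond_laplace}) directly to the $1/n$-Lipschitz functional $F_j$; the resulting bound $\mathbb{E}_{P_n}[\exp\lambda\Psi(f_j)]\leq\exp[n\alpha^\circledast(\lambda/(n(1-r)))]$ is identical.
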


\begin{proof}
 
Convergence to the equilibrium measure is dealt with using the contraction hypothesis. Indeed, by convexity of the map
$\mu \mapsto W_1(\mu, \pi)$, we first have 

\begin{equation*}
W_1(\pi_n, \pi) \leq \frac{1}{n} \sum_{i = 1}^n W_1( P^i(x_0, .), \pi).
\end{equation*}

Now, using that the contraction property (\ref{cond_b}) in Proposition \ref{marginals_markov} is equivalent to the inequality
$W_1( \mu_1 P, \mu_2 P) \leq r W_1( \mu_1, \mu_2)$ for all $\mu_1, \mu_2 \in \Pro_1(E)$, and using the fact that $\pi$
is $P$-invariant,

\begin{equation*}
W_1(\pi_n, \pi) \leq \frac{1}{n} \sum_{i = 1}^n r^i W_1( \delta_{x_0}, \pi) \leq \frac{W_1( \delta_{x_0}, \pi)}{n(1 - r)} = \frac{m_1}{n(1 - r)}.
\end{equation*}

In order to take care of the second term, we will use the same strategy (and notations) as in the independent case. Introduce once again a compact subset
$K \subset E$ and a covering of $\mathcal{F}_K$ by functions $f_1, \ldots, f_{\mathcal{N}_\delta}$ suitably extendend to $E$.
With the same arguments as before, we get

\begin{align*}
\mathbb{E}_{P_n}W_1( L_n, \pi_n) \leq &  \mathbb{E}_{P_n} (\max_{j = 1, \ldots, \mathcal{N}_\delta} \Psi(f_j)) + 2 \delta + 2 \int d(x, x_0) 1_{K^c} d \pi_n  \\
{} &  + 2 \mathbb{E}_{P_n} (\int d(x, x_0) 1_{K^c} d L_n)
\end{align*}

Then,

\begin{equation*}
 \int d(x_0, y) \pi_n(d y) = \frac{1}{n} \sum_{i = 1}^n \int d(x_0, y)1_{K^c}  P^i(x_0, d y).
\end{equation*}

As before we can use Orlicz-H\"{o}lder's inequality to recover the bound

\begin{equation*}
\int d(x_0, y) d \pi_n \leq \frac{2}{a} \frac{1}{n} \sum_{i = 1}^{n} \frac{1}{\sigma (\frac{1}{P^i(K^c)})}.
\end{equation*}

And likewise,

\begin{align*}
 \mathbb{E} (\int d(x, x_0) 1_{K^c} d L_n) & = \mathbb{E} \displaystyle \left[ \frac{1}{n} \sum_{i = 1}^n d(x_0, X_i) 1_{K^c} \right] \\
{} & = \frac{1}{n} \sum_{i = 1}^n \int d(x_0, y) 1_{K^c} P^i(x_0, dy)
\end{align*}

and we have the same bound as above.

As for the last term remaining : it will be possible to use the maximal inequality techniques just as in the proof of Theorem \ref{main_thm}, 
provided that we can find bounds on the terms $\mathbb{E} \displaystyle \left[ \exp \lambda \Psi (f_j) \right]$, where this time

\begin{equation*}
 \Psi(f) = \int f d L_n - \int f d \pi_n.
\end{equation*}

Denote 
                                                                         
\begin{equation*}
 F_j(x_1, \ldots, x_n) = \frac{1}{n} \sum_{i = 1}^n f_j(x_i).
\end{equation*}
         
This is a $\frac{1}{n}$-Lipschitz function on $E^n$. Since $P_n$ satisfies a $\tilde{\alpha}(\mathcal{T}_d) \leq H$ inequality, 
we have

\begin{equation*}
 \int \exp \lambda F_j d P_n \leq \exp \left[  \lambda \int F_j d P_n +  n \alpha^\circledast (\frac{\lambda}{n(1 - r)}) \right].
\end{equation*}

But this is exactly the bound

\begin{equation*}
 \mathbb{E} \displaystyle \left[ \exp \lambda \Psi(f_j) \right] \leq e^{n \alpha^\circledast (\frac{\lambda}{n(1 - r)})}.
\end{equation*}

We may then proceed as in the independent case and obtain

\begin{equation*}
 \mathbb{E} [ \max_{j = 1, \ldots, \mathcal{N}_\delta} \Psi(f_j) ] \leq \inf_{\lambda > 0} \frac{1}{\lambda} \left[ \log \mathcal{N}_\delta + n \alpha^\circledast (\frac{\lambda}{n(1 - r)}) \right].
\end{equation*}

\end{proof}

For any Lipschitz function $f : E^n \rightarrow \R$ (w.r.t. $d_1$), we have the concentration inequality

\begin{equation*}
 P_n (x \in E^n, \quad f(x) \geq \int f d P_n + t) \leq \exp - n \alpha \left( \frac{ (1-r)t}{ n \|f\|_\text{Lip}} \right).
\end{equation*}

Remembering that $E^n \ni x \mapsto W_1( L_n^x, \pi_n)$ is $\frac{1}{n}$-Lipschitz, we get the bound

\begin{equation} \label{concentration_markov}
 \pr( W_1(L_n, \pi_n) \geq \mathbb{E}_{P_n}[ W_1(L_n, \pi_n) ] + t ) \leq \exp - n \alpha \left( (1 - r)t\right).
\end{equation}

Thanks to the triangular inequality $W_1(L_n, \pi_n) \geq W_1(L_n, \pi) - W_1(\pi_n, \pi)$, it holds that

\begin{equation} \label{concentration_ergodic}
 \pr ( W_1( L_n, \pi) \geq W_1(\pi_n, \pi) + \mathbb{E}_{P_n} [W_1(L_n, \pi_n)] + t) \leq \exp - n \alpha \left((1 - r)t \right).
\end{equation}

This in turn leads to an estimate on the deviations, under
the condition that we may exhibit a compact set with large measure for all the measures $P^i$.
We now move on to the proof of Theorem \ref{theorem_markov}.

\begin{proof}[Proof of Theorem \ref{theorem_markov}]
 Fix $\delta = t/8$.
Set $m_1^i = \int |x| P^i(d x)$. We have

\begin{align*}
 m_1^i & \leq m_1 + W_1( P^i, \pi) \\
{} & \leq m_1 + r^i W_1( \delta_0, \pi) \\
{} & \leq 2 m_1.
\end{align*}

Thus

\begin{equation*}
 \int e^{a |x|} P^i(dx) \leq e^{a m_1^i + C a^2/4} \leq e^{2 m_1 a + C a^2 / 4}.
\end{equation*}

With $a$ as in the theorem, the above ensures that
$\int e^{a |x|} P^i(dx) \leq 2$.

Let $B_R$ denote the ball of center $0$ and radius $R$ :
we have $P^i(B_R^c) \leq 2 e^{- a R}$. Let

\begin{equation*}
 R = \frac{1}{a} \log 2 \sigma^{-1}(\frac{32}{ a t}).
\end{equation*}

so that $ 2 \delta +  \frac{8}{a} \frac{1}{n}\sum_{i = 1}^{n}\frac{1}{\sigma (\frac{1}{P^i(K^c)})} \leq t/2$.

As $\alpha(t) = \frac{1}{C} t^2$ we can compute 

\begin{equation*}
\Gamma(\mathcal{N}_\delta, n) = \frac{1}{1-r}\sqrt{\frac{C}{n}} \sqrt{\log \mathcal{N}_\delta}.
\end{equation*}

We have chosen $K = B_R$ and $\delta = t/8$. 
Working as in the proof of Proposition \ref{result_R}, when $t \leq 2/a$, we can bound $\log \mathcal{N}_\delta$ by

\begin{equation*}
 \log \mathcal{N}_\delta \leq C_d (\frac{1}{at} \log \frac{1}{at})^d
\end{equation*}

where $C_d$ is a numerical constant depending on the dimension $d$.
Plugging the above estimates in (\ref{concentration_ergodic}) and using the inequality $(u-v)^2 \geq u^2/2 - v^2$
gives the desired result.

\end{proof}

\appendix

\section{Some facts on transportation inequalities}  \label{appendix_transport_inequalities}

A crucial feature of transportation inequalities is that they imply the concentration of measure phenomenon, a fact first discovered by Marton
(\cite{marton1996bounding}). The following proposition is obtained by a straightforward adaptation of her famous argument :

\begin{prop}
 If $\mu$ verifies a $\alpha(\mathcal{T}_d)$ inequality, then for all measurable sets $A \subset E$ with $\mu(A) \geq \frac{1}{2}$ and $r \geq r_0 = \alpha^{-1}(\log 2)$,

\begin{equation*}
 \mu(A^r) \geq 1 - e^{-\alpha(r - r_0)}
\end{equation*}

where $A^r = \{ x \in E, \; d(x, A) \leq r \}$.

Moreover, let $X$ be a r.v. with law $\mu$. For all $1$-Lipschitz functions $f : E \rightarrow \R$ and all $r \geq r_0$, we have

\begin{equation*}
 \pr ( f(X) \geq m_f + r) \leq e^{- \alpha(r - r_0)}
\end{equation*}

where $m_f$ denotes a median of $f$.

\end{prop}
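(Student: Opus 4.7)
The plan is to use Marton's coupling argument. Fix a measurable set $A \subset E$ with $\mu(A) \geq 1/2$, set $B = (A^r)^c$, and assume (as we may) that $\mu(B) > 0$. Introduce the conditional probability measures
\begin{equation*}
\mu_A = \frac{1}{\mu(A)} \mathbf{1}_A \mu, \qquad \mu_B = \frac{1}{\mu(B)} \mathbf{1}_B \mu.
\end{equation*}
A direct computation gives $H(\mu_A \mid \mu) = -\log \mu(A) \leq \log 2$ and $H(\mu_B \mid \mu) = -\log \mu(B)$. Applying the $\alpha(\mathcal{T}_d)$ inequality to each of $\mu_A$ and $\mu_B$ then yields
\begin{equation*}
W_1(\mu_A, \mu) \leq \alpha^{-1}(\log 2) = r_0, \qquad W_1(\mu_B, \mu) \leq \alpha^{-1}(-\log \mu(B)),
\end{equation*}
where we use that $\alpha$ is increasing (and left-continuous, with $\alpha(0) = 0$), so its generalized inverse is well defined on $[0, +\infty)$.

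Next I would bound $W_1(\mu_A, \mu_B)$ from below. Any coupling $\pi$ of $\mu_A$ and $\mu_B$ is supported in $A \times B$, and by definition of $B = (A^r)^c$ we have $d(x, y) > r$ for every $(x, y) \in A \times B$. Hence $W_1(\mu_A, \mu_B) \geq r$. Combining this with the triangle inequality
\begin{equation*}
r \leq W_1(\mu_A, \mu_B) \leq W_1(\mu_A, \mu) + W_1(\mu, \mu_B) \leq r_0 + \alpha^{-1}(-\log \mu(B))
\end{equation*}
and rearranging gives $-\log \mu(B) \geq \alpha(r - r_0)$ as soon as $r \geq r_0$, i.e. $\mu(A^r) \geq 1 - e^{-\alpha(r - r_0)}$.

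For the second assertion, specialize to $A = \{f \leq m_f\}$, which satisfies $\mu(A) \geq 1/2$ by definition of the median. Since $f$ is $1$-Lipschitz, for any $x \in A^r$ one can find $y \in A$ with $d(x, y) \leq r$, so $f(x) \leq f(y) + r \leq m_f + r$; equivalently, $\{f > m_f + r\} \subset (A^r)^c$. The first part of the proposition then gives $\pr(f(X) \geq m_f + r) \leq e^{-\alpha(r - r_0)}$.

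The only subtle point is the lower bound $W_1(\mu_A, \mu_B) \geq r$: one must be slightly careful that the support of any coupling of $\mu_A$ and $\mu_B$ is indeed contained in $A \times B$ (which follows since the marginals concentrate there), and that $d(A, B) \geq r$ holds strictly by construction of $A^r$. Once this is in place everything else is routine manipulation of the entropy and the generalized inverse of $\alpha$.
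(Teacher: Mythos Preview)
Your proof is correct and is precisely Marton's coupling argument, which is exactly what the paper invokes: the paper does not give an explicit proof of this proposition but states that it ``is obtained by a straightforward adaptation of [Marton's] famous argument.'' One tiny wording issue: in the second part you write ``one can find $y \in A$ with $d(x,y) \leq r$,'' but the infimum defining $d(x,A)$ need not be attained; the conclusion $f(x) \leq m_f + r$ still follows by taking a minimizing sequence $y_n \in A$ and using continuity of $f$.
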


Bobkov and G\"{o}tze (\cite{bobkov1999exponential}) were the first to obtain an equivalent dual formulation of transportation inequalities.
We present it here in a more general form obtained by Gozlan and Leonard (see \cite{gozlan2010transport}), in the case when the transportation cost function is the distance.

\begin{definition}
 
Let $\alpha : [0, + \infty) \rightarrow \R$ be convex, increasing, left-continuous and vanishing at $0$.
The monotone conjugate of $\alpha$ is

\begin{equation*}
\alpha^\circledast (s) = \sup_{t \geq 0} st - \alpha(t). 
\end{equation*}

\end{definition}

\begin{prop}[\cite{gozlan2010transport}] \label{bobkov_gotze_characterization}
 
Assume that $d$ is a metric defining the topology of $E$, and that there exist $a > 0$, $x_0 \in E$ such that
 $\int \exp[ a d(x, x_0) ]\mu (d x) < + \infty$.

Then $\mu$ satisfies the $\alpha(\mathcal{T}_d)$ inequality

\begin{equation*}
 \alpha( \mathcal{T}_d (\mu, \nu) ) \leq H( \nu | \mu)
\end{equation*}

for all $\nu \in \Pro(E)$ with finite first moment if and only if 
for all $f : E \rightarrow \R$ $1$-Lipschitz and all $\lambda > 0$,

\begin{equation} \label{cond_laplace}
 \int e^{\lambda (f(x) - \int f d \mu)} \mu(d x) \leq e^{ \alpha^\circledast (\lambda)}.
\end{equation}

\end{prop}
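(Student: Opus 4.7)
The strategy is a standard Legendre-type duality argument combining Kantorovich-Rubinstein with the Donsker-Varadhan variational formula for entropy. The key analytical fact used throughout is that, under the stated hypotheses on $\alpha$ (convex, increasing, left-continuous, vanishing at $0$), the biconjugate $\alpha^{\circledast \circledast}$ coincides with $\alpha$ on $[0, + \infty)$, so that $\alpha(t) = \sup_{\lambda \geq 0} \{\lambda t - \alpha^\circledast(\lambda)\}$.

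\textbf{($\Leftarrow$) Laplace bound implies transportation inequality.} Fix $\nu \in \Pro(E)$ with $H(\nu|\mu) < + \infty$, and let $f : E \to \R$ be any $1$-Lipschitz function (we may assume $f(x_0) = 0$, so that $|f(x)| \leq d(x, x_0)$, which in turn is $\nu$-integrable since finite entropy plus the assumed exponential moment on $\mu$ implies integrability of $d(\cdot, x_0)$ against $\nu$). For $\lambda > 0$, apply the Donsker-Varadhan variational inequality with the test function $g = \lambda(f - \int f\, d\mu)$ to get
\begin{equation*}
\lambda \int f\, d\nu - \lambda \int f\, d\mu \;\leq\; H(\nu|\mu) + \log \int e^{\lambda(f - \int f\, d\mu)}\, d\mu \;\leq\; H(\nu|\mu) + \alpha^\circledast(\lambda).
\end{equation*}
Taking the supremum over $f \in \mathcal{F}$ yields, via Kantorovich-Rubinstein (\ref{kantorovich_rubinstein}), the bound $\lambda W_1(\mu, \nu) - \alpha^\circledast(\lambda) \leq H(\nu|\mu)$. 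Optimizing in $\lambda \geq 0$ and invoking $\alpha^{\circledast \circledast} = \alpha$ gives $\alpha(W_1(\mu, \nu)) \leq H(\nu|\mu)$.

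\textbf{($\Rightarrow$) Transportation inequality implies Laplace bound.} Fix a bounded $1$-Lipschitz $f$ (boundedness will be removed at the end by truncation) and set $F(\lambda) = \log \int e^{\lambda f}\, d\mu$ and $\nu_\lambda = e^{\lambda f} \mu / \int e^{\lambda f}\, d\mu$ for $\lambda \geq 0$. A direct computation gives $F'(\lambda) = \int f\, d\nu_\lambda$ and $H(\nu_\lambda | \mu) = \lambda F'(\lambda) - F(\lambda)$. Since $F$ is convex, $F'(\lambda) \geq F'(0) = \int f\, d\mu$ for $\lambda \geq 0$, and by Kantorovich-Rubinstein applied to $f$,
\begin{equation*}
W_1(\mu, \nu_\lambda) \;\geq\; \int f\, d\nu_\lambda - \int f\, d\mu \;=\; F'(\lambda) - F'(0) \;\geq\; 0.
\end{equation*}
Writing $t_\lambda = F'(\lambda) - F'(0)$, the assumed transportation inequality and monotonicity of $\alpha$ yield
\begin{equation*}
\alpha(t_\lambda) \;\leq\; \alpha(W_1(\mu, \nu_\lambda)) \;\leq\; H(\nu_\lambda|\mu) \;=\; \lambda t_\lambda - \bigl(F(\lambda) - \lambda F'(0)\bigr),
\end{equation*}
so that $F(\lambda) - \lambda \int f\, d\mu \leq \lambda t_\lambda - \alpha(t_\lambda) \leq \alpha^\circledast(\lambda)$, which is exactly (\ref{cond_laplace}) for this $f$.

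\textbf{Removing the boundedness assumption.} The main technical nuisance is that for a general $1$-Lipschitz $f$, the exponential moment $\int e^{\lambda f}\, d\mu$ is only known to be finite for $\lambda \leq a$ via the standing hypothesis, whereas (\ref{cond_laplace}) is claimed for all $\lambda > 0$. I would resolve this by applying the previous step to the bounded $1$-Lipschitz truncations $f_N = (-N) \vee f \wedge N$ (which have all exponential moments), obtaining
\begin{equation*}
\int e^{\lambda(f_N - \int f_N\, d\mu)}\, d\mu \;\leq\; e^{\alpha^\circledast(\lambda)} \qquad \text{for every } N, \lambda > 0,
\end{equation*}
and then passing to the limit $N \to + \infty$ by monotone/dominated convergence (using the hypothesis that $\int e^{a d(x, x_0)}\, d\mu < + \infty$ to control $\int f_N\, d\mu \to \int f\, d\mu$) to recover (\ref{cond_laplace}) for $f$. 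The subtle point, and the step I expect to require the most care, is the justification of this passage to the limit uniformly in $\lambda$; it relies on the fact that the right-hand side is independent of $N$, so that any limit point of the left-hand side, finite or not, is controlled by $e^{\alpha^\circledast(\lambda)}$, and Fatou's lemma then closes the argument.
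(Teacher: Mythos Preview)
Your proof is correct and is precisely the standard Bobkov--G\"{o}tze/Gozlan--L\'{e}onard duality argument. Note, however, that the paper does not give its own proof of this proposition: it is stated in the appendix as a quotation from \cite{gozlan2010transport}, so there is nothing to compare against beyond the original reference, whose argument your proposal faithfully reproduces.
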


In the case $\bt_1(C)$, Condition (\ref{cond_laplace}) becomes : for all $1$-Lipschitz $f : E \rightarrow \R$ and $\lambda > 0$,

\begin{equation}
 \int e^{\lambda(f - \int f d \mu)} \mu (x) \leq e^{C\lambda^2/4}.
\end{equation}

\subsubsection{Integral criteria} \label{integral_criteria}

An interesting feature of transportation inequalities is that some of them are 
characterized by simple moment conditions, making it tractable to obtain their existence. 
In \cite{djellout_guillin_wu},
Djellout, Guillin and Wu showed that $\mu$ satisfies a $\bt_1$ inequality if and only if 
$\int \exp [ a_0 d^2(x_0, y)] \mu(dy) < + \infty$
for some $a_0$ and some $x_0$. They also connect the value of $a_0$ and of the Gaussian moment with the value of
the constant $C$ appearing in the transportation inequality. More generally,
Gozlan and Leonard provide in \cite{large_dev_gozlan_leonard} a nice criterion
 to ensure that 
a $\alpha( \bt_d)$ inequality holds. We only quote here one side of what is actually an equivalence :

\begin{theorem}
 Let $\mu \in \Pro(E)$. 
Suppose there exists $a > 0$ with $\int e^{a d(x_0, x)} \mu(d x) \leq 2$ for some $x_0 \in E$, and
a convex, increasing function $\gamma$ on $[0, + \infty)$ vanishing at $0$ and $x_1 \in E$ such that

\begin{equation*}
 \int \exp \gamma (d(x_1, x) ) \mu(d x) \leq B < + \infty.
\end{equation*}

Then $\mu$ satisfies the $\alpha(\mathcal{T}_d)$ inequality

\begin{equation*}
 \alpha( W_1( \mu, \nu) ) \leq H( \nu | \mu)
\end{equation*}

for all $\nu \in \Pro(E)$ with finite first moment, with 

\begin{equation*}
\alpha(x) = \max \left( (\sqrt{ax + 1} - 1)^2, \; 2 \gamma( \frac{x}{2} - 2 \log B )\right).  
\end{equation*}

\end{theorem}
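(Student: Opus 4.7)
The plan is to decouple the two moment hypotheses by exploiting the elementary fact that $\alpha(W_1(\mu,\nu))\le H(\nu|\mu)$ with $\alpha=\max(\alpha_1,\alpha_2)$ is equivalent to the conjunction $\alpha_1(W_1)\le H$ and $\alpha_2(W_1)\le H$, since the max is pointwise. Writing $\alpha_1(x)=(\sqrt{1+ax}-1)^2$ and $\alpha_2(x)=2\gamma(x/2-2\log B)$ (extended by $0$ on $(-\infty,4\log B]$, which preserves convexity, monotonicity and left-continuity), it therefore suffices to prove each inequality separately and then take the pointwise maximum. In both cases the tool will be the Bobkov-Götze dual characterization (Proposition \ref{bobkov_gotze_characterization}): $\alpha_i(\mathcal{T}_d)\le H$ is equivalent to $\int e^{\lambda(f-\int f d\mu)}\,d\mu\le \exp(\alpha_i^\circledast(\lambda))$ for every $1$-Lipschitz $f$ and $\lambda>0$.

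For $\alpha_1$, a direct calculation gives $\alpha_1^\circledast(\lambda)=\lambda^2/(a(a-\lambda))$ on $[0,a)$. I would derive the required Laplace bound from the linear moment $\int e^{ad(x_0,x)}d\mu\le 2$ by the standard Bernstein-type route: assume WLOG $f(x_0)=0$, use $|\tilde f(x)|\le d(x_0,x)+m_1$, expand $\int e^{\lambda\tilde f}d\mu$ in Taylor series, and bound $\int d(x_0,x)^k d\mu\le 2\cdot k!/a^k$ termwise, summing to $\exp(\lambda^2/(a(a-\lambda)))$ for $\lambda<a$. (Equivalently, one can run a Herbst-style differential inequality for $F(\lambda)=\log\int e^{\lambda\tilde f}d\mu$.) This is essentially the modified $\bt_1^*$ inequality of Djellout-Guillin-Wu and yields $\alpha_1(W_1)\le H$.

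For $\alpha_2$, the substitution $y=x/2-2\log B$ gives $\alpha_2^\circledast(\lambda)=4\lambda\log B+2\gamma^\circledast(\lambda)$. Shift $f$ so $f(x_1)=0$; then $|f(x)|\le d(x_1,x)$ and, with $m=\int d(x_1,\cdot)d\mu$, one has $\tilde f(x)\le d(x_1,x)+m$. By Jensen applied successively to $\gamma$ and $\exp$, $e^{\gamma(m)}\le\int e^{\gamma(d(x_1,\cdot))}d\mu\le B$, so $\gamma(m)\le\log B$. Young's inequality $\lambda u\le\gamma(u)+\gamma^\circledast(\lambda)$, applied once to $\lambda d(x_1,x)$ inside the integral (using the hypothesis $\int e^{\gamma(d(x_1,\cdot))}d\mu\le B$) and once to $\lambda m$ outside, gives $\log\int e^{\lambda\tilde f}d\mu\le 2\log B+2\gamma^\circledast(\lambda)$. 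This bounds $\alpha_2^\circledast(\lambda)$ for $\lambda\ge 1/2$; for $0<\lambda<1/2$ I would use the Jensen rescaling $\int e^{\lambda\tilde f}d\mu\le(\int e^{\tilde f}d\mu)^\lambda$ (concavity of $u\mapsto u^\lambda$) to convert the $\lambda$-free constant $2\log B$ into a $\lambda$-linear bound compatible with the $4\lambda\log B$ term.

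The main obstacle is the careful constant-matching in the $\alpha_2$ step: the naive Young/Jensen combination produces an additive $2\log B$ rather than the multiplicative $4\lambda\log B$ demanded by the stated $\alpha_2^\circledast$. Closing this gap uniformly in $\lambda$ is delicate and is the only genuinely non-routine computation; once it is resolved, applying Bobkov-Götze to each $\alpha_i$ concludes. An alternative route for the small-$\lambda$ regime would use Cauchy-Schwarz $\int e^{\lambda\tilde f}\le(\int e^{2\lambda f})^{1/2}e^{-\lambda\int f d\mu}$ together with the convexity identity $\gamma^\circledast(2\lambda)\ge 2\gamma^\circledast(\lambda)$, which also brings the $\log B$-dependence into the desired multiplicative form.
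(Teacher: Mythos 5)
Note first that the paper does not prove this statement at all: it is quoted verbatim from Gozlan--L\'eonard \cite{large_dev_gozlan_leonard}, so your proof cannot be compared with an in-paper argument and has to stand on its own. Your skeleton is reasonable (splitting the max into the two inequalities $\alpha_1(W_1)\le H$ and $\alpha_2(W_1)\le H$ and dualizing each via Proposition \ref{bobkov_gotze_characterization} is legitimate, and your conjugate computations $\alpha_1^\circledast(\lambda)=\lambda^2/(a(a-\lambda))$ and $\alpha_2^\circledast(\lambda)=4\lambda\log B+2\gamma^\circledast(\lambda)$ are correct), but neither half is actually established. For $\alpha_1$, the whole content of that half is precisely the Laplace bound $\int e^{\lambda(f-\int f d\mu)}d\mu\le\exp\bigl(\lambda^2/(a(a-\lambda))\bigr)$, and the route you sketch does not deliver it with these constants: with $\int d(x_0,x)^k d\mu\le 2k!/a^k$ and $|\tilde f|\le d(x_0,x)+m_1$ (where $m_1$ can be as large as $(\log 2)/a$), the termwise summation yields $1+c\,\lambda^2/(a(a-\lambda))$ with $c$ of order $2$ to $4$, and $1+cs\le e^{s}$ is false for $c>1$ and small $s$. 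Conjugating $c\lambda^2/(a(a-\lambda))$ gives $c(\sqrt{1+ax/c}-1)^2$, which is strictly smaller than $(\sqrt{1+ax}-1)^2$, so you prove a genuinely weaker $\alpha_1$ than stated; invoking the ``modified $\bt_1$'' inequality of Djellout--Guillin--Wu likewise produces the $C(H+\sqrt H)$ form with different constants, not the stated one. A Herbst-type argument is mentioned but not carried out, so this step is an assertion, not a proof.

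For $\alpha_2$ the gap is one you acknowledge yourself, and your proposed patches do not close it. Your Young/Jensen computation gives $\log\int e^{\lambda\tilde f}d\mu\le 2\log B+2\gamma^\circledast(\lambda)$, which matches the required $4\lambda\log B+2\gamma^\circledast(\lambda)$ only for $\lambda\ge 1/2$. The rescaling $\int e^{\lambda\tilde f}d\mu\le(\int e^{\tilde f}d\mu)^\lambda$ needs $\gamma^\circledast(1)<\infty$ and, even then, requires $\gamma^\circledast(1)\le\log B+\gamma^\circledast(\lambda)/\lambda$, which fails in general (take $\gamma$ linear with slope $c<1$, so $\gamma^\circledast(1)=+\infty$ while the target bound for $\lambda\le c$ is the finite quantity $4\lambda\log B$); the Cauchy--Schwarz variant still produces the additive, $\lambda$-independent $\log B$ coming from bounding $\lambda m$ by $\gamma(m)+\gamma^\circledast(\lambda)$, which is exactly the term that must become linear in $\lambda$. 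It is worth noting that your bound $2\log B+2\gamma^\circledast(\lambda)$ is precisely the monotone conjugate of $x\mapsto 2\gamma(x/2)-2\log B$, so your argument cleanly proves that variant of the second branch, but not the branch $2\gamma\bigl(\tfrac{x}{2}-2\log B\bigr)$ as stated. As it stands, both halves of the proof are incomplete, and the small-$\lambda$ regime of the second half plus the exact-constant Laplace estimate of the first half are genuine missing ingredients, not routine details.
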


One particular instance of the result above was first obtained by Bolley and Villani, with sharper constants, in the case when $\mu$
only has a finite exponential moment (\cite{bolley2005weighted}), Corollary 2.6). Their technique involves the study of weighted Pinsker inequalities, and
encompasses more generally costs of the form $d^p$, $p \geq 1$ (we give only the case $p = 1$ here).

\begin{theorem}
 Let $a > 0$ be such that $E_{a, 1} = \int e^{a d(x_0,x)} \mu(d x) < + \infty$. Then for $\nu \in \Pro_1(E)$, we have

\begin{equation*}
 W_1(\mu, \nu) \leq C \displaystyle \left( H( \nu | \mu) + \sqrt{ H(\nu | \mu)  } \right)
\end{equation*}

where $C = \frac{2}{a} \left( \frac{3}{2} + \log E_{a, 1} \right) < + \infty.$
\end{theorem}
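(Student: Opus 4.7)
The plan is to follow the weighted Csisz\'ar-Kullback-Pinsker strategy of Bolley-Villani. First, by the Kantorovich-Rubinstein duality (\ref{kantorovich_rubinstein}), it suffices to bound $\int f\,d(\nu - \mu)$ uniformly over $1$-Lipschitz functions $f$ vanishing at $x_0$. Any such $f$ satisfies $|f| \leq h := d(\cdot, x_0)$, hence its $\mu$-mean $\bar f$ satisfies $|\bar f| \leq \bar h := \int h\,d\mu$, which is finite by the exponential moment assumption. Applying the Donsker-Varadhan variational formula for relative entropy to the centered test function $\lambda(f - \bar f)$ gives, for every $\lambda > 0$,
\begin{equation*}
\int f\,d(\nu - \mu) \leq \frac{1}{\lambda}\left[ H(\nu|\mu) + \Lambda_f(\lambda)\right], \qquad \Lambda_f(\lambda) := \log \int e^{\lambda(f - \bar f)}\,d\mu.
\end{equation*}

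The central technical step is to produce a quadratic-in-$\lambda$ upper envelope for $\Lambda_f(\lambda)$, uniform over $f$. Since $f - \bar f$ is $\mu$-centered, I would invoke the pointwise estimate $e^x - 1 - x \leq \tfrac{1}{2} x^2 e^{x_+}$ combined with the bound $|f - \bar f| \leq h + \bar h$ to get
\begin{equation*}
\Lambda_f(\lambda) \leq \frac{\lambda^2}{2}\, e^{\lambda \bar h}\int (h + \bar h)^2 e^{\lambda h}\,d\mu,
\end{equation*}
after the elementary estimate $\log(1 + y) \leq y$. The exponential moment assumption then controls the right-hand side for $\lambda \in (0, a)$ via inequalities such as $h^2 e^{\lambda h} \leq \frac{4}{(a - \lambda)^2} e^{ah}$ (a direct consequence of $y \leq e^y$) and Jensen's estimate $a \bar h \leq \log E_{a,1}$, producing a clean envelope $\Lambda_f(\lambda) \leq K\lambda^2$ on some subinterval $(0, \lambda_0] \subset (0, a)$ with $K$ and $\lambda_0$ explicit in $a$ and $E_{a,1}$.

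With such an envelope in hand, the variational bound becomes $\int f\,d(\nu - \mu) \leq H/\lambda + K\lambda$ for $\lambda \in (0, \lambda_0]$, and the statement follows by optimizing in $\lambda$ and accounting for the boundary constraint. Two regimes appear: when $\sqrt{H/K} \leq \lambda_0$ the unconstrained minimizer $\lambda^\ast = \sqrt{H/K}$ is admissible and yields the $\sqrt{H}$ contribution $2\sqrt{KH}$; when $\sqrt{H/K} > \lambda_0$ the minimizer saturates at $\lambda_0$ and produces a contribution of order $H/\lambda_0$, which is linear in $H$. Summing the two regimes, taking the supremum over $f \in \mathcal{F}$, and re-applying Kantorovich-Rubinstein gives $W_1(\mu, \nu) \leq C_1\sqrt{H(\nu|\mu)} + C_2 H(\nu|\mu)$, which has the announced shape. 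The main obstacle will be sharpness of the constants: matching the specific value $C = \tfrac{2}{a}(\tfrac{3}{2} + \log E_{a,1})$ requires a judicious choice of $\lambda_0$ (likely close to $a$) and careful bookkeeping of $\bar h$ and of the quadratic factor $K$, since naive estimates only recover the correct qualitative form but generally produce a slightly worse prefactor than $\tfrac{3}{2} + \log E_{a,1}$.
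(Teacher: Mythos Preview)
The paper does not actually prove this theorem: it is quoted from Bolley--Villani \cite{bolley2005weighted} (Corollary~2.6) as a known result, with only the remark that ``their technique involves the study of weighted Pinsker inequalities''. So there is no in-paper proof to compare against.

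Your proposal is a correct and standard route to the inequality. The Donsker--Varadhan bound combined with a quadratic envelope on the log-Laplace transform of centered Lipschitz observables does give $W_1(\mu,\nu)\le C_1\sqrt{H}+C_2 H$, and your two-regime optimization in $\lambda$ is the right way to see the $H+\sqrt{H}$ shape. Note, however, that this differs in emphasis from the original Bolley--Villani argument: they first establish a \emph{weighted total-variation} Pinsker inequality of the form $\int \varphi\,d|\mu-\nu|\le C(\varphi)\sqrt{H(\nu|\mu)}$ (with $C(\varphi)$ depending on $\log\int e^{t\varphi}d\mu$), and then choose $\varphi=d(\cdot,x_0)$ and combine with the coupling bound $W_1(\mu,\nu)\le\int d(\cdot,x_0)\,d|\mu-\nu|$. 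The linear-in-$H$ term arises in their approach from splitting the density $d\nu/d\mu$ into a bounded part and a large part, rather than from a saturation of the optimizer at $\lambda_0$. Both approaches yield the same qualitative conclusion; the exact constant $\tfrac{2}{a}(\tfrac{3}{2}+\log E_{a,1})$ is specific to their bookkeeping, and as you correctly anticipate, your envelope $\Lambda_f(\lambda)\le K\lambda^2$ with $K$ coming from $h^2 e^{\lambda h}\le \tfrac{4}{(a-\lambda)^2}e^{ah}$ will produce a slightly different (and likely worse) numerical prefactor unless you mimic their splitting more closely.
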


And in the case when $\mu$ admits a finite Gaussian moment, the following holds (\cite{bolley2005weighted}, Corollary 2.4) :

\begin{theorem}
 Let $a > 0$ be such that $E_{a, 2} = \int e^{a d^2(x_0,x)} \mu(d x) < + \infty$. Then $\mu$ satisfies a $\bt_1(C)$ inequality
where $C = \frac{2}{a} \left( 1 + \log E_{a, 2} \right) < + \infty.$
\end{theorem}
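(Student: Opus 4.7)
The plan is to reduce the statement, via Bobkov--G\"{o}tze's dual characterization (Proposition~\ref{bobkov_gotze_characterization}), to a Laplace transform estimate. Specializing that proposition to $\alpha(t)=t^{2}/C$, whose monotone conjugate is $\alpha^{\circledast}(\lambda)=C\lambda^{2}/4$, shows that $\mu\in\bt_{1}(C)$ is equivalent to
\[
\int e^{\lambda(f-\int f\,d\mu)}\,d\mu\leq e^{C\lambda^{2}/4}
\]
for every $1$-Lipschitz $f\colon E\to\R$ and every $\lambda>0$. Replacing $f$ by $f-f(x_{0})$ does not affect the centered exponent, so I may assume $f(x_{0})=0$, whence $|f(x)|\le d(x,x_{0})$. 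The goal thus becomes to establish this Laplace bound with $C=\tfrac{2}{a}(1+\log E_{a,2})$.

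My first step is the elementary Young-type inequality $\lambda t\le\lambda^{2}/(4a)+at^{2}$, applied to $t=d(x,x_{0})$. Integrating against $\mu$ gives
\[
\int e^{\lambda f}\,d\mu\le e^{\lambda^{2}/(4a)}\,E_{a,2}.
\]
This already has the essentially correct Gaussian factor $1/(4a)$, but it carries the parasitic multiplicative constant $E_{a,2}$, which prevents the bound from equalling $1$ at $\lambda=0$, as it must after centering. The whole difficulty is to absorb $\log E_{a,2}$ into the centering correction $-\lambda\int f\,d\mu$.

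To do this I would follow the truncation/weighted Pinsker strategy of Bolley and Villani~\cite{bolley2005weighted}. Fix $R>0$ and split $f=f\,\mathbf{1}_{B_{R}}+f\,\mathbf{1}_{B_{R}^{c}}$ where $B_{R}=\{d(\cdot,x_{0})\le R\}$. On $B_{R}$ the function is uniformly bounded by $R$, so a Hoeffding-type estimate gives a centered Laplace bound of order $\exp(c\lambda^{2}R^{2})$ for the truncated part, with a constant independent of $E_{a,2}$. On $B_{R}^{c}$ the Gaussian tail $\mu(B_{R}^{c})\le E_{a,2}e^{-aR^{2}}$, combined with $|f|\le d(\cdot,x_{0})$ and the same Young inequality, controls the remaining contribution. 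Optimizing in $R$ for fixed $\lambda$ — the optimum being of order $R\sim\sqrt{(\log E_{a,2})/a}$ — and repackaging should produce the Laplace bound with $C=\tfrac{2}{a}(1+\log E_{a,2})$.

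The main technical obstacle is the precise bookkeeping in that truncation--optimization step: extracting the sharp constant $C=\tfrac{2}{a}(1+\log E_{a,2})$ requires carefully balancing the bounded-in-$R$ contribution (which supplies the factor $\tfrac{2}{a}$) against the tail (which supplies the additive $1+\log E_{a,2}$). Once the Laplace bound is in hand, Proposition~\ref{bobkov_gotze_characterization} returns the $\bt_{1}(C)$ inequality with no further work; the auxiliary hypothesis of a finite first exponential moment required there is automatic, since $a\,d(x,x_{0})\le 1/4+a\,d(x,x_{0})^{2}$ gives $E_{a,1}\le e^{1/4}E_{a,2}<+\infty$.
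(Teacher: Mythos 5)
Your reduction via Proposition~\ref{bobkov_gotze_characterization} is legitimate, and your preliminary estimate $\int e^{\lambda f}d\mu\le e^{\lambda^2/(4a)}E_{a,2}$ is correct; but the heart of the argument — the truncation/optimization step that is supposed to turn this into the \emph{centered} bound $\int e^{\lambda(f-\int f d\mu)}d\mu\le e^{C\lambda^2/4}$ with $C=\tfrac 2a(1+\log E_{a,2})$ — is only asserted (``should produce''), and as sketched it does not go through. Concretely: writing $e^{\lambda f}=e^{\lambda f\mathbf 1_{B_R}}e^{\lambda f\mathbf 1_{B_R^c}}$ you cannot integrate the two factors separately; you need H\"older or Cauchy--Schwarz, which replaces $\lambda$ by $2\lambda$ (or $p\lambda$) inside each factor and inflates the quadratic constant by a fixed factor, so the scheme cannot recover the stated sharp value of $C$. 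Moreover the Hoeffding bound for the truncated part is centered at $\int f\mathbf 1_{B_R}d\mu$, not at $\int f d\mu$; the discrepancy $\int f\mathbf 1_{B_R^c}d\mu$ produces a term linear in $\lambda$ (uniformly over all $1$-Lipschitz $f$, this is of size $\int_{B_R^c}d(x,x_0)d\mu$), and absorbing a linear term into $e^{C\lambda^2/4}$ for \emph{all} $\lambda>0$, in particular $\lambda\to 0$, is exactly where the difficulty lies — it is the same obstruction that defeats your first crude bound. So the proposal establishes at best that some $\bt_1$ inequality should hold (which is the Djellout--Guillin--Wu integral criterion already recalled in paragraph~\ref{integral_criteria}), but not the specific constant $C=\tfrac 2a(1+\log E_{a,2})$ that the statement asserts. (A minor slip besides: $a\,d(x,x_0)\le 1/4+a\,d^2(x,x_0)$ is false for $a>1$; use $a d\le a/4+a d^2$ instead — harmless, since finiteness of $E_{a,1}$ is all you need there.)

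For comparison, the paper does not prove this statement at all: it is quoted from Bolley and Villani (\cite{bolley2005weighted}, Corollary 2.4), and their proof works on the primal side, not the dual one. They bound $W_1(\mu,\nu)\le\int d(x,x_0)\,d|\mu-\nu|(x)$ and control this weighted total variation by $\sqrt{H(\nu|\mu)}$ through a weighted Csisz\'ar--Kullback--Pinsker inequality, in which the weight's square-exponential moment $E_{a,2}$ enters logarithmically; this is how the constant $\tfrac 2a(1+\log E_{a,2})$ arises. If you want to complete your write-up you should either reproduce that weighted Pinsker argument, or accept a dual-side proof with a worse (unspecified) constant and weaken the statement accordingly.
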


\section{Covering numbers of the set of $1$-Lipschitz functions}

In this section, we provide bounds for the covering numbers of the set of $1$-Lipschitz functions over a precompact space.

Note that these results are likely not new.
However, we have been unable to find an original work, so we provide proofs for completeness.

Let $(K, d)$ be a precompact metric space, and let $\mathcal{N}(K, \delta)$ denote the minimal number of
balls of radius $\delta$ necessary to cover $K$. Let $x_0 \in K$ be a fixed point, and let 
$\mathcal{F}$ denote the set of $1$-Lipschitz functions over $K$ vanishing at $x_0$.
This is also a precompact space when endowed with the metric of uniform convergence.
We denote by $\mathcal{N}(\mathcal{F}, \delta)$ the minimal number of balls of radius $\delta$ necessary to cover
$\mathcal{F}$. Finally, we set $R = \max_{x \in K} d(x, x_0)$.

Our first estimate is a fairly crude one.

\begin{prop}
We have

\begin{equation*}
 \mathcal{N}(\mathcal{F}, \eps) \leq \left( 2 + 2 \lfloor \frac{3 R}{\eps} \rfloor \right)^{\mathcal{N}(K, \frac{\eps}{3})}.
\end{equation*}

\end{prop}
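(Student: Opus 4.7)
\medskip

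\noindent\textbf{Proof proposal.} The plan is to reduce the infinite-dimensional set $\mathcal{F}$ to a finite combinatorial object by sampling each $1$-Lipschitz function on a finite net of $K$ and then quantizing the values it takes. This is a standard ``skeleton'' construction, and the bound of the form $(\text{\#values})^{(\text{\#net points})}$ appearing in the statement strongly suggests it.

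First, pick an $\varepsilon/3$-net of $K$, i.e.\ points $x_1, \dots, x_N \in K$ with $N = \mathcal{N}(K, \varepsilon/3)$ whose $\varepsilon/3$-balls cover $K$; without loss of generality we can adjoin $x_0$ to the net (this only increases $N$ by one, so for the final estimate we may as well assume $x_0$ belongs to the net, or use the net of original size at the cost of the triangle inequality). For any $f \in \mathcal{F}$ and any index $i$, the condition $f(x_0) = 0$ together with the $1$-Lipschitz property gives $|f(x_i)| = |f(x_i)-f(x_0)| \le d(x_i,x_0) \le R$, so the value vector $(f(x_1), \dots, f(x_N))$ lives in $[-R,R]^N$.

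Next, quantize the range $[-R,R]$. Partition $[-R,R]$ into a family of consecutive intervals of length $\varepsilon/3$: the number of intervals needed is $\lceil 6R/\varepsilon\rceil$, and an elementary case distinction on the fractional part of $3R/\varepsilon$ gives $\lceil 6R/\varepsilon\rceil \le 2 + 2\lfloor 3R/\varepsilon\rfloor$. Label the intervals; to each $f \in \mathcal{F}$ associate the tuple $\tau(f) \in \{1, \dots, 2+2\lfloor 3R/\varepsilon\rfloor\}^N$ recording, for each $i$, the label of the interval containing $f(x_i)$. The number of possible tuples is at most $(2+2\lfloor 3R/\varepsilon\rfloor)^N$, and for each tuple $\tau$ actually realized by some element of $\mathcal{F}$, select a representative $f_\tau \in \mathcal{F}$ with $\tau(f_\tau) = \tau$.

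Finally, I would check that the family $\{f_\tau\}$ is an $\varepsilon$-net in $\mathcal{F}$. Fix $f \in \mathcal{F}$, set $\tau = \tau(f)$, and let $x \in K$. Choose $x_i$ with $d(x, x_i) \le \varepsilon/3$. Three applications of the triangle inequality give
\begin{equation*}
|f(x) - f_\tau(x)| \le |f(x)-f(x_i)| + |f(x_i) - f_\tau(x_i)| + |f_\tau(x_i) - f_\tau(x)|,
\end{equation*}
and each of the three terms is at most $\varepsilon/3$: the outer ones by the $1$-Lipschitz property, and the middle one because $f(x_i)$ and $f_\tau(x_i)$ lie in a common interval of length $\varepsilon/3$. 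Hence $\|f - f_\tau\|_\infty \le \varepsilon$, yielding the claimed bound on $\mathcal{N}(\mathcal{F}, \varepsilon)$. The only non-mechanical point is keeping track of exactly how many quantization bins are needed so that the constant $2+2\lfloor 3R/\varepsilon\rfloor$ comes out as stated rather than $1+\lceil 6R/\varepsilon\rceil$; this is what the case analysis above handles.
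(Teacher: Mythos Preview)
Your proof is correct and essentially identical to the paper's: both sample $f$ on a net of $K$, quantize the values into intervals of the appropriate length, pick one representative per realized tuple, and use the triangle inequality to bound the uniform distance by three equal pieces. The only cosmetic difference is that the paper works with an $\eps$-net and quantization step $\eps$ to get a $3\eps$-covering and then substitutes $\eps \to \eps/3$, whereas you build in the factor $1/3$ from the start; also, your aside about adjoining $x_0$ to the net is unnecessary, since the bound $|f(x_i)| \le R$ follows directly from $f(x_0)=0$ and $d(x_i,x_0)\le R$ without $x_0$ being a net point.
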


\begin{proof}
For simplicity, write $n = \mathcal{N}(K, \eps)$.
 Let $x_1, \ldots, x_n$ be the centers of a set of balls covering $K$.
For any $f \in \mathcal{F}$ and $1 \leq i \leq n$, we have

\begin{equation*}
 |f(x_i)| = |f(x_i) - f(x_0)| \leq R.
\end{equation*}

For any $n$-uple of integers $k = (k_1, \ldots, k_n)$ such that
 $- \lfloor \frac{R}{\eps} \rfloor - 1 \leq k_i \leq \lfloor \frac{R}{\eps} \rfloor$, 
$1 \leq i \leq n$,
choose a function $f_k \in \mathcal{F}$ such that $k_i \eps \leq f_k(x_i) \leq (k_i + 1) \eps$ if there exists one.

Consider $f \in \mathcal{F}$. 
Let $l_i = \lfloor \frac{f(x_i)}{\eps} \rfloor$ and $l = (l_1, \ldots, l_n)$.
Then the function $f_l$ defined above exists and $|f(x_i) - f_l(x_i)| \leq \eps$ for $1 \leq i \leq n$. But then for any $x \in K$ there exists $i$, $1 \leq i \leq n$,
 such that $x \in B(x_i, \eps)$, and thus

\begin{equation*}
 |f(x) - f_l(x) | \leq |f(x) - f(x_i)| + |f(x_i) - f_l(x_i)| + |f_l(x_i) - f_l(x)| \leq 3 \eps.
\end{equation*}

This implies that $\mathcal{F}$ is covered by the balls of center $f_k$ and radius $3 \eps$. As there are at most
$(2 + 2 \lfloor \frac{R}{\eps} \rfloor)^n$ choices for $k$, this ends the proof.

\end{proof}

However, this bound is quite weak : as one can see by considering the case of a segment, for most choices of a $n$-uple, there
will not exist a function in $\mathcal{F}$ satisfying the requirements in the proof.
With the extra assumption that $K$ is connected, we can get a more refined result.

\begin{prop} \label{prop_covering_number_lip}
 If $K$ is connected, then 

\begin{equation} \label{covering_number_lip_2}
 \mathcal{N}( \mathcal{F}, \eps) \leq \left( 2 + 2 \lfloor \frac{4 R}{\eps} \rfloor \right) \, 2^{ \displaystyle \mathcal{N}(K, \frac{\eps}{16}) }.
\end{equation}

\end{prop}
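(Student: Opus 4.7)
The plan is to exploit connectedness of $K$ by replacing the independent per-node choices of the previous proposition with a spanning-tree encoding: the value at a single anchor point contributes a factor of order $R/\eps$, and each of the $n-1$ tree edges contributes only a bounded number of extra choices. This converts the crude bound $(R/\eps)^n$ into the desired $(R/\eps)\cdot 2^n$-type estimate.

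Set $\delta = \eps/16$ and $n = \mathcal{N}(K,\delta)$, and fix a $\delta$-net $\{x_1,\ldots,x_n\}$ of $K$. The topological input I would use is the classical fact that in a connected metric space any two points can be joined by a $\delta$-chain, i.e.\ a finite sequence of points with consecutive distances $\leq\delta$. Replacing each point of such a chain by a nearest net point and using the triangle inequality shows that the graph on the net whose edges join pairs at distance $\leq C\delta$, for a suitable universal constant $C$, is connected. Extracting a spanning tree $T$ of this graph yields $n-1$ edges, each of length $\leq C\delta$.

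For $f\in\mathcal{F}$, I would quantize: pick a step $\eta$ of order $\eps$, set $\tilde f(x_i)=\eta\lfloor f(x_i)/\eta+1/2\rfloor$, and extend $\tilde f$ to $K$ by declaring it constant on each Voronoi cell of the net. The triangle inequality gives $\|f-\tilde f\|_\infty\leq\delta+\eta/2$, which is arranged to be $\leq\eps$. Now root $T$ at a net point $x_{i_0}$: the integer $m_{i_0}=\tilde f(x_{i_0})/\eta$ lies in $[-R/\eta,R/\eta]$, contributing the prefactor $2+2\lfloor 4R/\eps\rfloor$. For each tree edge $(x_i,x_j)$, the Lipschitz bound $|f(x_i)-f(x_j)|\leq C\delta$ forces $|\tilde f(x_i)-\tilde f(x_j)|\leq C\delta+\eta$, so the integer increment $m_i-m_j$ takes only a universally bounded number $b$ of values. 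Traversing $T$ from the root, the integer vector $(m_i)_{i=1}^n$ and hence $\tilde f$ is specified by $m_{i_0}$ plus $n-1$ edge increments, giving at most $(2+2\lfloor 4R/\eps\rfloor)\cdot b^{n-1}$ distinct approximants; choosing the scales so that $b^{\mathcal{N}(K,\delta)}\leq 2^{\mathcal{N}(K,\eps/16)}$ produces the stated bound.

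The main obstacle is the simultaneous calibration of the three scales $\delta$, $\eta$ and $C\delta$: the approximation error $\delta+\eta/2$ must remain $\leq\eps$ while keeping the per-edge count $b$ small enough that $b^n$ is absorbed into $2^{\mathcal{N}(K,\eps/16)}$. The comfortable factor $16$ in $\eps/16$ provides precisely the slack needed to close both inequalities simultaneously.
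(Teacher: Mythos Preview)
Your overall strategy --- build a connected graph on a net, extract a spanning tree, and encode each Lipschitz function by a root value plus $n-1$ edge increments --- is exactly the paper's approach. (The paper connects net points $x_i$, $x_j$ when the covering balls $B(x_i,\eps)$ and $B(x_j,\eps)$ intersect, rather than via $\delta$-chains; both give a connected graph with edges of length $O(\delta)$.) The gap is in the per-edge count, and it is not a matter of slack in the constants.

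With $\delta = \eps/16$ fixed, nearest-multiple rounding $m_i = \lfloor f(x_i)/\eta + 1/2 \rfloor$ forces $b \geq 3$ regardless of $\eta$: the edge increment $m_i - m_j$ ranges over all of $\{-1,0,1\}$ as $f$ varies over $1$-Lipschitz functions (take $f$ constant near a half-integer multiple of $\eta$ to get increment $0$; perturb one endpoint across the rounding threshold, which costs at most $C\delta$ in Lipschitz terms, to get $\pm 1$). Hence your encoding yields at best $3^{\mathcal{N}(K,\eps/16)}$, not $2^{\mathcal{N}(K,\eps/16)}$. Shrinking $\delta$ only enlarges the exponent; enlarging $\delta$ gives no universal comparison between $\mathcal{N}(K,\delta)$ and $\mathcal{N}(K,\eps/16)$. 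So the final sentence --- that the factor $16$ ``provides precisely the slack needed to close both inequalities'' --- does not go through.

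The paper achieves $b=2$ by a genuinely different encoding step: it does \emph{not} round $f$. Instead it approximates the restricted vector $(f(y_i))_i$ by a function $g$ on the tree whose edge increments are forced to equal exactly $\pm 4\eps$ (two choices, no zero option). That such a $g$ stays within $4\eps$ of $(f(y_i))_i$ is the content of a separate lemma, proved by induction along the tree: at each newly attached leaf one chooses the sign of the increment according to the sign of $f(y_n) - g(a_{n-1})$, and the Lipschitz bound $|f(y_i)-f(y_j)|\leq 4\eps$ along tree edges prevents the error from accumulating. This inductive approximation lemma is the missing idea in your argument.
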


\begin{remark}
 The simple idea in this proposition is first to bring the problem to a discrete metric space
 (graph), and then to bound the number of
Lipschitz functions on this graph by the number of Lipschitz functions on a spanning tree of the graph.
\end{remark}

\begin{proof}
 
In the following, we will denote $n = \mathcal{N}(K, \eps)$ for simplicity.
Let $x_i$, $1 \leq i \leq n$ be the centers of a set of $n$ balls 
$B_1, \ldots B_n$ covering $K$. Consider the graph $G$ 
built on the $n$ vertices $a_1, \ldots, a_n$,
where vertices $a_i$ and $a_j$ are connected if and only if
$i \neq j$ and the balls $B_i$ and $B_j$ have a non-empty intersection.

\begin{lemma}
The graph $G$ is connected. Moreover, there exists a subgraph $G'$ 
with the same set of vertices and whose edges are edges of $G$, which is a tree.
\end{lemma}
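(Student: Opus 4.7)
The plan is to prove the two claims in turn, both by elementary graph-theoretic/topological arguments.

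\textbf{Connectedness of $G$.} I would argue by contradiction. Suppose $G$ is disconnected: then the vertex set $\{a_1,\ldots,a_n\}$ splits into two nonempty disjoint subsets $I$ and $J$ such that no edge of $G$ joins $I$ to $J$, i.e.\ $B_i \cap B_j = \emptyset$ for every $i \in I$, $j \in J$. Set
\[
U = \bigcup_{i \in I} B_i, \qquad V = \bigcup_{j \in J} B_j.
\]
Taking the balls $B_i$ to be open (which we may, up to a harmless adjustment of the radius), $U$ and $V$ are open in $K$, both nonempty, their union covers $K$, and by the assumption they are disjoint. This contradicts the connectedness of $K$, so $G$ must be connected.

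\textbf{Existence of the spanning tree $G'$.} This is a standard fact: any finite connected graph contains a spanning tree. I would simply invoke this, but to keep the argument self-contained one can proceed constructively: start from $G$ and, as long as there is a cycle, remove any one edge from that cycle. Removing an edge from a cycle preserves connectedness (the two endpoints remain joined by the remainder of the cycle), and each removal strictly decreases the number of edges, so the process terminates. The resulting subgraph $G'$ has the same vertex set as $G$, uses only edges of $G$, is connected, and is acyclic, hence is a tree.

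The only conceptually delicate point is the first step, and even there the main obstacle is just to be a bit careful about whether the covering balls are open or closed; once open balls are used, the topological contradiction with connectedness of $K$ is immediate. The second step is routine graph theory.
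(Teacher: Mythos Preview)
Your proof is correct and follows essentially the same approach as the paper: both parts are proved exactly as you do, by contradiction with the connectedness of $K$ for the first claim and by a standard spanning-tree argument for the second. The only cosmetic difference is that the paper uses closed balls (so $U$ and $V$ are disjoint closed nonempty sets covering $K$) rather than adjusting to open balls, and for the spanning tree it simply invokes ``an easy induction on the size of the graph'' rather than your cycle-removal procedure.
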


\begin{proof}
 Suppose that $G$ were not connected . Upon exchanging the labels of the balls, there would
exist $k$, $1 \leq k < n$, such that for $i \leq k < j$ the
balls $B_i$ and $B_j$ have empty intersection.
But then $K$ would be equal to the disjoint reunion of the sets
$\bigcup_{i = 1}^k B_i$ and $\bigcup_{j = k+1}^n B_j$, which are both closed and
non-empty, contradicting the connectedness of $K$.

The second part of the claim is obtained by an easy induction on the size of the graph.
\end{proof}

Introduce the set $\mathcal{A}$
of functions $g : \{a_1, \ldots, a_n \} \rightarrow \mathbb{R}$ such that
$g(a_1) = 0$ and $|g(a_i) - g(a_j)| = 4 \eps$ whenever $a_i$ and $a_j$ are connected in $G'$.
Using the fact that $G'$ is a tree, it is easy to see that $\mathcal{A}$ contains at most
$2^n$ elements.

Define a partition of $K$ by setting $C_1 = B_1$, 
$C_2 = B_2 \backslash C_1$, $\ldots$, $ C_n = B_n \backslash C_{n-1}$ (remark that none of the $C_i$ is empty since
the $B_i$ are supposed to constitute a minimal covering).
Also fix for each $i$, $1 \leq i \leq n$, a point $y_i \in C_i$ (choosing $y_1 = x_1$). Notice that
$C_i$ is included in the ball of center $y_i$ and radius $2 \eps$, and that 
$d(y_i, y_j) \leq 4 \eps$ whenever $a_i$ and $a_j$ are connected in $G$ (and therefore in $G'$).

To every $1$-Lipschitz function $f : K \rightarrow \R$ we associate $T(f) : \{a_1, \ldots, a_n \} \rightarrow \R$
defined by $T(f) (a_i) = f(y_i)$. For any $x \in K$, and $f_1$, $f_2 \in \mathcal{F}$, we have the following :

\begin{eqnarray*}
 |f_1(x) - f_2(x) | & \leq & |f_1(x) - f_1(y_i) | + |f_1(y_i) - f_2(y_i) | + |f_2(y_i) - f_2(x)| \\
{} & \leq & 4 \eps + \|T(f_1) - T(f_2)\|_{ \ell_\infty(G') }
\end{eqnarray*}

where $i$ is such that $x \in C_i$.
We now make the following claim : 

\begin{lemma}  \label{lip_approx}
 For every $1$-Lipschitz function $f : K \rightarrow \R$ such that
$f(y_1) = 0$, there exists $g \in \mathcal{A}$ such that $\| T(f) - g \|_{\ell_\infty(G')} \leq 4 \eps$.
\end{lemma}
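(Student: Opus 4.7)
The plan is to exploit the tree structure of $G'$: root $G'$ at $a_1$ and build $g$ by induction on the graph distance from $a_1$, choosing at each step one of the two values permitted by the membership condition for $\mathcal{A}$ so as to stay within $4\eps$ of $T(f)$. The base case is immediate: set $g(a_1) = 0 = f(y_1) = T(f)(a_1)$, which agrees with the requirement $g(a_1)=0$.

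For the inductive step, suppose $a_i$ is a vertex whose (unique) parent in the rooted tree is $a_j$, and that we have already defined $g(a_j)$ with $|g(a_j) - f(y_j)| \leq 4\eps$. Since $a_i$ and $a_j$ are adjacent in $G'$ (and hence in $G$), by the remark preceding the lemma we have $d(y_i, y_j) \leq 4\eps$, so by the $1$-Lipschitz property of $f$, $|f(y_i) - f(y_j)| \leq 4\eps$. Combining with the induction hypothesis gives $|g(a_j) - f(y_i)| \leq 8\eps$. Set $\delta = f(y_i) - g(a_j) \in [-8\eps, 8\eps]$, and define
\begin{equation*}
g(a_i) = \begin{cases} g(a_j) + 4\eps & \text{if } \delta \geq 0, \\ g(a_j) - 4\eps & \text{if } \delta < 0. \end{cases}
\end{equation*}
In either case, $|g(a_i) - g(a_j)| = 4\eps$ as required by the definition of $\mathcal{A}$, and a direct computation shows $|g(a_i) - f(y_i)| = |4\eps \operatorname{sgn}(\delta) - \delta| \leq 4\eps$ since $|\delta| \leq 8\eps$, so the induction hypothesis propagates.

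Iterating along the tree defines $g$ on every vertex, and the construction guarantees $g \in \mathcal{A}$ and $\|T(f) - g\|_{\ell_\infty(G')} \leq 4\eps$, as desired. The only subtle point is ensuring that the inductive construction is well-defined, which relies crucially on $G'$ being a tree: if $G'$ had a cycle, the constraint $|g(a_i) - g(a_j)| = 4\eps$ around the cycle would generally be inconsistent with any prescribed $\pm$ sign choices. Since $G'$ is a spanning tree of $G$, each non-root vertex has a unique parent, the recursion is unambiguous, and no consistency conditions beyond those handled above arise. This is the main place where the preliminary reduction to a spanning tree pays off.
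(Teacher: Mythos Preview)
Your proof is correct and follows essentially the same inductive strategy as the paper: exploit the tree structure of $G'$ to define $g$ one vertex at a time, choosing the $\pm 4\eps$ increment so as to stay within $4\eps$ of $T(f)$. The only cosmetic difference is that you root the tree at $a_1$ and build outward, whereas the paper phrases the induction by peeling off a leaf; your sign convention in the case split is in fact the correct one (the paper's printed choice appears to have the signs reversed).
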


Assume for the moment that this holds. As there are at most $2^n$ functions in $\mathcal{A}$, it is possible to choose
at most $2^n$ $1$-Lipschitz functions $f_1, \ldots, f_{2^n}$ vanishing at $x_1$ such that for any 
$1$-Lipschitz function f vanishing at $x_1$ there exists $f_i$ such that $|T(f) - T(f_i)| \leq 8 \eps$.
Using the inequality above, this implies that the balls of center $f_i$ and radius $12 \eps$ for the uniform distance
 cover the set of $1$-
Lipschitz functions vanishing at $x_1$.

Finally, consider $f \in \mathcal{F}$. We may write

\begin{equation*}
 f = f - f(x_1) + f(x_1)
\end{equation*}

and observe that on the one hand, $f - f(x_1)$ is a $1$-Lipschitz function vanishing at $x_1$, and that on the other hand,
 $|f(x_1)| \leq R$.
Thus the set $\mathcal{F}$ is covered by the balls of center $f_i + 4 k \eps$ and radius $16 \eps$,
where $- \lfloor \frac{R}{ 4 \eps} \rfloor - 1 \leq k \leq \lfloor \frac{R}{4 \eps} \rfloor$.
There are at most $(2 + 2 \lfloor \frac{R}{4 \eps} \rfloor) 2^n$ such balls, which proves the desired result.

\end{proof}

We now prove Lemma \ref{lip_approx}.
\begin{proof}
 
Let us use induction again. If $K = B_1$ then $T(f) = 0$ and the property is straightforward.
Now if $K = C_1 \cup \ldots \cup C_n$, we may assume without loss of generality that $a_n$ is a leaf in $G'$, that
is a vertex with exactly one neighbor, and that it is connected to $a_{n-1}$.
By hypothesis there exists $\tilde{g}: \{ a_1, \ldots, a_{n-1} \} \rightarrow \R$
 such that $|\tilde{g}(a_i) - \tilde{g}(a_j)| = 4 \eps$ 
whenever $a_i$ and $a_j$ are connected in $G'$, and $|\tilde{g}(a_i) - f(a_i)| \leq 4 \eps$ for $1 \leq i < n$.
Set $g = \tilde{g}$ on $ \{a_1, \ldots, a_{n-1} \}$, and
\begin{itemize}
 \item $g(a_n) = g(a_{n-1}) + 4 \eps$ if $f(y_n) - g(a_{n-1}) < 0$,
 \item $g(a_n) =  g(a_{n-1}) - 4 \eps$ otherwise.
\end{itemize}

Since
\begin{equation*}
|f(y_n) - g(a_{n-1}) | \leq |f(y_n) - f(y_{n-1})| + |f(y_{n-1}) - g(a_{n-1}) | \leq 8 \eps
\end{equation*}
it is easily checked that $|f(y_n) - g(a_n) | \leq 4 \eps$. The function $g$ belongs to $\mathcal{A}$ and our claim is proved.

\end{proof}

\section{H\"{o}lder moments of stochastic processes}

We quote the following result from Revuz and Yor's book \cite{revuz1999continuous} (actually
the value of the constant is not given in their statement but is easily tracked in the proof).

\begin{theorem} \label{kolmogorov_theorem}
 Let $X_t$, $t \in [0, 1]$ be a Banach-valued process such that there exist $\gamma, \varepsilon, c > 0$ with

\begin{equation*}
 \mathbb{E} \left[|X_t-X_s| ^\gamma \right] \leq c |t-s|^{1+ \varepsilon},
\end{equation*}

then there exists a modification $\tilde{X}$ of $X$ such that

\begin{equation*}
 \mathbb{E} \left[ \left( \sup_{s \neq t} \frac{|\tilde{X}_t - \tilde{X}_s|}{|t-s|^\alpha} \right)^\gamma \right]^{1/\gamma} \leq 2^{1 + \alpha} (2c)^{1/\gamma} \frac{1}{1 - 2^{\alpha - \varepsilon/\gamma}}
\end{equation*}

for all $0 \leq \alpha < \varepsilon/\gamma$.

\end{theorem}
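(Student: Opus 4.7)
The plan is to carry out the classical dyadic chaining proof of Kolmogorov's continuity criterion, tracking the constants.

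First, I would introduce the maximal dyadic increments $M_n = \max_{1 \leq k \leq 2^n} |X_{k 2^{-n}} - X_{(k-1) 2^{-n}}|$ for $n \geq 0$. Using $(\max_k a_k)^\gamma = \max_k a_k^\gamma \leq \sum_k a_k^\gamma$ and the moment hypothesis, one gets $\mathbb{E}[M_n^\gamma] \leq 2^n \cdot c \cdot 2^{-n(1+\varepsilon)} = c\, 2^{-n\varepsilon}$. Next, on the set $D$ of dyadic rationals in $[0,1]$, the key combinatorial step is to show that for $s,t \in D$ with $2^{-n-1} \leq |t-s| < 2^{-n}$, one can link $s$ to $t$ by a chain of consecutive dyadic neighbors using at most two jumps at each level $m > n$, which yields $|X_t - X_s| \leq 2 \sum_{m > n} M_m$. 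Dividing by $|t-s|^\alpha \geq 2^{-(n+1)\alpha}$, taking the supremum over $(s,t)$, and observing that $\sup_n 2^{n\alpha} \sum_{m > n} M_m \leq \sum_{m \geq 1} 2^{(m-1)\alpha} M_m$, produces
\[
\sup_{\substack{s,t \in D \\ s \neq t}} \frac{|X_t - X_s|}{|t-s|^\alpha} \leq 2^{1+\alpha} \sum_{m \geq 1} 2^{(m-1)\alpha} M_m.
\]

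Since $\alpha < \varepsilon/\gamma$, applying Minkowski's inequality in $L^\gamma$ (for $\gamma \geq 1$) together with the bound on $\|M_n\|_\gamma$ controls this sum by a convergent geometric series, and routine collection of constants gives the announced value $2^{1+\alpha}(2c)^{1/\gamma}/(1 - 2^{\alpha - \varepsilon/\gamma})$. For $\gamma < 1$, Minkowski is replaced by $\|X+Y\|_\gamma^\gamma \leq \|X\|_\gamma^\gamma + \|Y\|_\gamma^\gamma$, with a minor adjustment of constants. Finally, the finiteness of the expected supremum ensures that $X|_D$ is almost surely uniformly continuous on $D$, so it extends uniquely to a continuous process $\tilde{X}$ on $[0,1]$ (set $\tilde{X} \equiv 0$ on the exceptional null event). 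The Hölder bound passes from $D$ to $[0,1]$ by continuity, and $\tilde{X}$ is a modification of $X$ because the hypothesis forces $X_{s_n} \to X_t$ in $L^\gamma$ along any dyadic sequence $s_n \to t$, while by construction $X_{s_n}(\omega) \to \tilde{X}_t(\omega)$ almost surely on the good event.

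The main obstacle is the dyadic combinatorics leading to $|X_t - X_s| \leq 2 \sum_{m > n} M_m$ with the right "at most two jumps per level" count; this requires careful bookkeeping of dyadic paths between $s$ and $t$. Once this bound is established, tracking the constants through the geometric sum and extending from $D$ to $[0,1]$ by uniform continuity is mechanical.
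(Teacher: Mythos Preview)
The paper does not actually prove this theorem: it is quoted from Revuz and Yor's book, with the remark that the explicit constant can be tracked through their proof. Your sketch is precisely the classical dyadic chaining argument found there, and the steps you outline (the bound on $\mathbb{E}[M_n^\gamma]$, the two-jumps-per-level chaining estimate, the geometric series via Minkowski, and the extension from dyadics by uniform continuity) are correct, so there is nothing to compare and nothing to fix.
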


\begin{corollary} \label{holder_moments_sbm}
Let $(B_t)_{0 \leq t \leq 1}$ denote the standard Brownian motion on $[0, 1]$.
 Let  $m_\alpha = \mathbb{E} \sup_t \| B_t \|_\alpha$ and $s^2_\alpha = \mathbb{E} (\sup_t \| B_t \|_\alpha)^2$, then $m_\alpha$ and $s_\alpha$ are bounded by

\begin{equation*}
 C_\alpha = 2^{1+ \alpha}\frac{2^{(1-2 \alpha)/4}}{1 - 2^{(2 \alpha-1)/4}} \|Z\|_{4/(1-2\alpha)}
\end{equation*}

where $\|Z\|_p$ denotes the $L_p$ norm of a $\mathcal{N}(0, 1)$ variable $Z$.

\end{corollary}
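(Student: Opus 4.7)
The plan is to deduce the bound directly from Kolmogorov's continuity theorem (Theorem \ref{kolmogorov_theorem}) applied to Brownian motion, with a well-chosen exponent $\gamma$ depending on $\alpha$. The starting point is the Gaussian scaling identity: for $s,t \in [0,1]$ and any $\gamma > 0$, $B_t - B_s \sim \sqrt{|t-s|}\, Z$ with $Z \sim \mathcal{N}(0,1)$, so
\begin{equation*}
\mathbb{E}|B_t - B_s|^\gamma = \|Z\|_\gamma^\gamma\, |t-s|^{\gamma/2}.
\end{equation*}
This puts Brownian motion into the setting of Theorem \ref{kolmogorov_theorem} with constant $c = \|Z\|_\gamma^\gamma$ and $1 + \varepsilon = \gamma/2$, i.e. $\varepsilon = \gamma/2 - 1$, provided $\gamma > 2$.

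Next I would pick $\gamma$ to match the target H\"older exponent $\alpha \in (0, 1/2)$. The condition $\alpha < \varepsilon/\gamma = 1/2 - 1/\gamma$ rewrites as $\gamma > 2/(1 - 2\alpha)$. The natural choice that makes the constants transparent is $\gamma = 4/(1 - 2\alpha)$, which is admissible and yields $\varepsilon/\gamma = (1 + 2\alpha)/4$ and hence $\alpha - \varepsilon/\gamma = (2\alpha - 1)/4$. Plugging these values into Theorem \ref{kolmogorov_theorem}, together with $(2c)^{1/\gamma} = 2^{1/\gamma} \|Z\|_\gamma = 2^{(1-2\alpha)/4} \|Z\|_{4/(1-2\alpha)}$, produces exactly
\begin{equation*}
\bigl(\mathbb{E}\|B\|_\alpha^\gamma\bigr)^{1/\gamma} \leq 2^{1+\alpha} \cdot \frac{2^{(1-2\alpha)/4}}{1 - 2^{(2\alpha - 1)/4}} \|Z\|_{4/(1-2\alpha)} = C_\alpha.
\end{equation*}

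Finally, since $\gamma = 4/(1 - 2\alpha) > 4 > 2$ for $\alpha \in (0, 1/2)$, the Jensen inequality (monotonicity of $L^p$ norms in $p$) gives
\begin{equation*}
m_\alpha = \mathbb{E}\|B\|_\alpha \leq \bigl(\mathbb{E}\|B\|_\alpha^\gamma\bigr)^{1/\gamma} \leq C_\alpha, \qquad s_\alpha = \bigl(\mathbb{E}\|B\|_\alpha^2\bigr)^{1/2} \leq \bigl(\mathbb{E}\|B\|_\alpha^\gamma\bigr)^{1/\gamma} \leq C_\alpha,
\end{equation*}
which is the claim. The proof is essentially a bookkeeping exercise; the only mildly delicate point is to optimize the choice of $\gamma$ so that the $\varepsilon/\gamma - \alpha$ gap in the denominator of Theorem \ref{kolmogorov_theorem} turns into the clean exponent $(1 - 2\alpha)/4$ appearing in $C_\alpha$, and this is why the value $\gamma = 4/(1-2\alpha)$ is forced on us. No other obstacles are expected.
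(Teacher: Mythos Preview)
Your proposal is correct and follows essentially the same route as the paper: the paper writes $\gamma = 2p$, uses the Gaussian moment identity $\mathbb{E}|B_t-B_s|^{2p}=K_p|t-s|^p$, applies Theorem \ref{kolmogorov_theorem}, makes the same choice $2p = 4/(1-2\alpha)$, and concludes by monotonicity of $L^p$ norms (which it phrases as ``H\"older's inequality''). Your bookkeeping is cleaner, but the argument is the same.
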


\begin{proof}
Since the increments of the Brownian motion are Gaussian, we have for every $p > 0$

\begin{equation*}
 \mathbb{E}[ |B_t - B_s|^{2p} ] = K_p |t-s|^p
\end{equation*}

with $K_p = \sqrt{2 \pi}^{-1} \int_{- \infty}^{+\infty} |x|^{2p} e^{-x^2/2} dx$.
Choose $p$ such that $\alpha < (p-1)/2p$, then

\begin{equation*}
 \left( \mathbb{E} \|X\|_\alpha^{2p} \right)^{1/2p} \leq \frac{2^{1+ \alpha}}{1 - 2^{\alpha - 1/2 + 1/2p}} (2 K_p)^{1/2p}.
\end{equation*}

A suitable choice is $1/p = 1/2 - \alpha$, and the right-hand side becomes

\begin{equation*}
 C_\alpha = \frac{2^{1+ \alpha}}{1 - 2^{(\alpha - 1/2)/2}} (2 G_\alpha)^{(1/2 - \alpha)/2}
\end{equation*}

with $G_\alpha = \sqrt{2 \pi}^{-1} \int_{- \infty}^{+\infty} |x|^{4/(1 - 2 \alpha)} e^{-x^2/2} dx$.
By H\"{o}lder's inequality, the result follows.

\end{proof}

\begin{corollary} \label{corollary_holder_sde}

Let $X_t$ be the solution on $[0, T]$ of

\begin{equation*}
 d X_t = \sigma(X_t) dB_t + b(X_t) dt
\end{equation*}
 
with $\sigma, b : \R \rightarrow \R$ locally Lipschitz and satisfying the following hypotheses :

\begin{itemize}
 \item  $\sup_x | \sqrt{ \text{tr}\sigma(x)^t\sigma(x)} | \leq A$,
 \item $\sup_x |b(x)| \leq B$.
\end{itemize}

Then for $\alpha < 1/2$, for $p$ such that $\alpha < (p-1)/2p$, 
there exists $C < + \infty$ depending explicitely on $A$, $B$, $T$, $\alpha$ ,$p$
such that

\begin{equation*}
 \mathbb{E}\|X\|_\alpha^p \leq C.
\end{equation*}

\end{corollary}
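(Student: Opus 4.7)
The plan is to apply Theorem \ref{kolmogorov_theorem} with exponent $\gamma=2p$, so I first need a control of $\mathbb{E}|X_t-X_s|^{2p}$ of the form $c|t-s|^{1+\varepsilon}$ with $1+\varepsilon=p$. Writing
\begin{equation*}
X_t-X_s=\int_s^t b(X_u)\,du+\int_s^t \sigma(X_u)\,dB_u,
\end{equation*}
I split the two terms with the elementary convexity bound $|a+b|^{2p}\le 2^{2p-1}(|a|^{2p}+|b|^{2p})$ and estimate them separately.

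For the drift part, the assumption $\sup_x|b(x)|\le B$ gives immediately $\left|\int_s^t b(X_u)\,du\right|^{2p}\le B^{2p}|t-s|^{2p}$, and since $|t-s|\le T$ this is dominated by $B^{2p}T^p|t-s|^p$. For the martingale part, the Burkholder--Davis--Gundy inequality yields a universal constant $C_p$ with
\begin{equation*}
\mathbb{E}\left|\int_s^t \sigma(X_u)\,dB_u\right|^{2p}\le C_p\,\mathbb{E}\left(\int_s^t \mathrm{tr}\,\sigma\sigma^t(X_u)\,du\right)^p\le C_p A^{2p}|t-s|^p,
\end{equation*}
using the pointwise bound on $\sqrt{\mathrm{tr}\,\sigma\sigma^t}$. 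Combining the two gives a constant $c=c(A,B,T,p)$ such that $\mathbb{E}|X_t-X_s|^{2p}\le c|t-s|^p$ for all $0\le s,t\le T$, which is exactly the Kolmogorov-type hypothesis with $\gamma=2p$ and $\varepsilon=p-1$.

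Theorem \ref{kolmogorov_theorem} then applies whenever $\alpha<\varepsilon/\gamma=(p-1)/(2p)$, which is precisely the standing assumption on $\alpha$ and $p$, and delivers an explicit bound
\begin{equation*}
\left(\mathbb{E}\|X\|_\alpha^{2p}\right)^{1/(2p)}\le 2^{1+\alpha}(2c)^{1/(2p)}\,\frac{1}{1-2^{\alpha-(p-1)/(2p)}}.
\end{equation*}
A final application of Jensen's inequality, $\mathbb{E}\|X\|_\alpha^p\le(\mathbb{E}\|X\|_\alpha^{2p})^{1/2}$, yields the claimed bound with a constant $C$ that can be written explicitly in $A$, $B$, $T$, $\alpha$, $p$.

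The one subtlety, which I view as the main technical point rather than an obstacle, is the use of BDG: one should either take $p\ge 1$ (in which case the classical BDG constants are well-behaved) or, if smaller $p$ must also be handled, pass through Hölder's inequality to reduce to an integer-exponent BDG bound before plugging in the uniform bound on $\sigma$. Everything else is bookkeeping on the constants, and the hypothesis $\alpha<(p-1)/(2p)$ is precisely what matches $\alpha<\varepsilon/\gamma$ in Theorem \ref{kolmogorov_theorem}.
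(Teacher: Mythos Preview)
Your proof is correct and reaches the same Kolmogorov-type moment estimate $\mathbb{E}|X_t-X_s|^{2p}\le c|t-s|^p$ as the paper, then invokes Theorem \ref{kolmogorov_theorem} in the identical way. The route to that moment bound, however, is genuinely different. The paper proceeds by applying It\^{o}'s formula to $|X_t-X_s|^{2k}$ for $k=1,2,\ldots$, combining the elementary inequality $x\le\tfrac12(1+x^2)$ with Gronwall's lemma, and running an induction on $k$; this avoids any appeal to Burkholder--Davis--Gundy at the cost of an iterative argument. Your approach splits the increment into drift and martingale parts and handles the latter in one stroke via BDG, which is shorter and arguably more transparent. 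The paper's version has the minor advantage of being self-contained with only It\^{o} and Gronwall, while yours exploits a standard stochastic-analysis tool to bypass the recursion. Note also that your worry about small $p$ is moot here: the hypothesis $\alpha<(p-1)/(2p)$ with $\alpha>0$ forces $p>1$, so the classical BDG constants apply without further maneuvering, and the final Jensen step is a harmless convenience rather than a necessity.
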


\begin{proof}

We first apply It\^{o}'s formula to the function $|X_t - X_s|^2$ : this yields

\begin{equation*}
 \mathbb{E}|X_t - X_s|^2 \leq 2 B \int_s^t \mathbb{E}|X_u - X_s| du + A |t-s|.
\end{equation*}

Using the elementary inequality $x \leq 1/2(1 + x^2)$, we get

\begin{equation*}
 \mathbb{E}|X_t - X_s|^2 \leq B \int_s^t \mathbb{E}|X_u - X_s|^2 du + (A + B)|t-s|.
\end{equation*}

Gronwall's lemma entails

\begin{equation*}
 \mathbb{E}|X_t - X_s|^2 \leq (A + B) e^{B T} |t -s|
\end{equation*}

Likewise, applying It\^{o}'s formula to $|X_t - X_s|^4$, we get

\begin{align*}
  \mathbb{E}|X_t - X_s|^4 & \leq 4 B \int_s^t \mathbb{E}|X_u - X_s|^3 ds + 6A \int_s^t \mathbb{E}|X_u-X_s|^2 du \\
{} & \leq (6A+2B) \int_s^t \mathbb{E}|X_u-X_s|^2 du + 2B \int_s^t \mathbb{E}|X_u-X_s|^4 du \\
{} &\leq \frac{1}{2}(6A+2B)(A+B)e^{BT}|t-s|^2 + 2B\int_s^t \mathbb{E}|X_u-X_s|^4 du
\end{align*}

and by Gronwall's lemma $\mathbb{E}|X_t - X_s|^4 \leq \frac{1}{2}(6A+2B)(A+B)e^{3BT}|t-s|^2$.
By an easy recurrence, following the above, one may show that

\begin{equation*}
 \mathbb{E}|X_t - X_s|^{2p} \leq C(A, B, T, p) |t-s|^p.
\end{equation*}

To conclude it suffices to call on Theorem \ref{kolmogorov_theorem}.

\end{proof}

\section{Transportation inequalities for Gaussian measures on a Banach space} \label{appendix_gaussian}

\begin{lemma} \label{lemma_gaussian_tail}
 Let $(E, \mu)$ be a Gaussian Banach space, and define $m = \int \|x\| \mu(dx)$. Also let $\sigma^2$ denote the weak variance of $\mu$.
The tail of $\mu$ is bounded as follows : for all $R \geq 0$,

\begin{equation*}
 \mu \{x \in E, \; \|x\| \geq m + R \} \leq e^{- R ^2/2\sigma^2 }.
\end{equation*}
\end{lemma}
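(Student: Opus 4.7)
The plan is to view the stated tail bound as an instance of the standard Gaussian concentration of measure applied to the $1$-Lipschitz functional $F(x) = \|x\|$. The triangle inequality gives $|F(x) - F(y)| \leq \|x - y\|$, so $F$ is $1$-Lipschitz on $E$. By Hahn-Banach together with the separability of $E$, one has $\|x\| = \sup_{\phi \in D} \phi(x)$ for some countable dense subset $D$ of the unit ball $B_{E^*}$ of the dual space. If $X \sim \mu$, each $\phi(X)$ is a centered real Gaussian, and $(\phi(X))_{\phi \in D}$ is a countable centered Gaussian process whose supremum equals $\|X\|$.

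The next step is to invoke the Borell-Sudakov-Tsirelson inequality for suprema of separable centered Gaussian processes: if $(Z_t)_{t \in T}$ is such a process with $\mathbb{E}\sup_t Z_t < \infty$, then for all $R \geq 0$,
\begin{equation*}
\pr\left(\sup_t Z_t \geq \mathbb{E}\sup_t Z_t + R\right) \leq \exp\left(-\frac{R^2}{2 \sigma_*^2}\right),
\end{equation*}
where $\sigma_*^2 = \sup_t \mathbb{E} Z_t^2$. Specializing to $Z_\phi = \phi(X)$, the left-hand side becomes $\mu(\|x\| \geq m + R)$, and
\begin{equation*}
\sigma_*^2 = \sup_{\phi \in D} \mathbb{E}\phi(X)^2 = \sup_{\phi \in B_{E^*}} \mathbb{E}\phi(X)^2 = \sigma^2,
\end{equation*}
where the second equality uses density together with the continuity of $\phi \mapsto \mathbb{E}\phi(X)^2$ on $B_{E^*}$. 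This is exactly the announced bound.

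The only delicate points are the finiteness of $m$ and the measurability of the supremum in Borell's inequality, but both are standard in the Gaussian Banach space setting: Fernique's theorem guarantees even square-exponential integrability of $\|X\|$, and separability of $E$ handles the measurability issue. As an alternative route bypassing the supremum representation, one could invoke directly the fact (quoted earlier in the paper) that a centered Gaussian measure on a separable Banach space satisfies a $\bt_2(2\sigma^2)$ transport-entropy inequality; via the Bobkov-Götze dual characterization of such inequalities, this immediately yields $\mu(F \geq \int F\, d\mu + R) \leq \exp(-R^2/(2\sigma^2))$ for every $1$-Lipschitz $F$, and specializing to $F(x) = \|x\|$ produces the lemma in one line.
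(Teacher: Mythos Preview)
Your proof is correct. Note, however, that the paper does not actually supply a proof of this lemma: it is stated in the appendix as a known fact and then the text moves directly to results on the Wiener measure. So there is no ``paper's own proof'' to compare against.

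Your argument via Borell--Sudakov--Tsirelson is the standard one and is carried out carefully; the identification of $\sigma_*^2$ with the weak variance $\sigma^2$ is exactly right, and you correctly flag the measurability and integrability issues (handled by separability and Fernique). Your alternative route through the $\bt_2(2\sigma^2)$ inequality is also valid and is in fact closer in spirit to the paper, which explicitly invokes that inequality elsewhere (in the proof of Example~\ref{theorem_gaussian_vector}); from $\bt_1(2\sigma^2)$ the Bobkov--G\"otze bound $\int e^{\lambda(F-\int F\,d\mu)}\,d\mu \le e^{\sigma^2\lambda^2/2}$ optimized in $\lambda$ gives precisely $e^{-R^2/(2\sigma^2)}$. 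Either approach would be an acceptable justification for the stated lemma.
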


\bigskip

Finally we collect some (loose) results on the Wiener measure on the Banach space $(C([0, 1], \R), \|.\|_\infty)$.

\begin{lemma}
 The Wiener measure satisfies a $\bt_2(8)$ inequality (and therefore a $\bt_1(8)$ inequality).
\end{lemma}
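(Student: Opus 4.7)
The plan is to produce, for every probability measure $\nu$ absolutely continuous with respect to the Wiener measure $\mathbb{P}$ with $H(\nu|\mathbb{P}) < \infty$, an explicit coupling via Girsanov's theorem that realizes a good bound on $W_2(\nu,\mathbb{P})$ for the sup-norm cost. In fact this strategy yields the sharper $\bt_2(2)$ inequality, which trivially implies $\bt_2(8)$; the suboptimal constant $8$ in the statement only means that any losses incurred along the way are harmless.

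Concretely, given $\nu \ll \mathbb{P}$ with finite entropy, I would use the martingale representation theorem together with Girsanov's theorem to write $d\nu/d\mathbb{P}$ as a Dol\'eans--Dade exponential driven by a predictable process $(h_s)_{s \in [0,1]}$, so that under $\nu$ the shifted process $\tilde B_t := B_t - \int_0^t h_s\,ds$ is a standard Brownian motion, and
\begin{equation*}
H(\nu|\mathbb{P}) \;=\; \tfrac{1}{2}\,\mathbb{E}_\nu\!\left[\int_0^1 h_s^2\,ds\right].
\end{equation*}
The joint law of $(B,\tilde B)$ under $\nu$ is then a coupling of $\nu$ and $\mathbb{P}$. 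By the Cauchy--Schwarz inequality,
\begin{equation*}
\|B - \tilde B\|_\infty \;=\; \sup_{t \in [0,1]} \left| \int_0^t h_s\,ds \right| \;\leq\; \int_0^1 |h_s|\,ds \;\leq\; \left(\int_0^1 h_s^2\,ds\right)^{1/2},
\end{equation*}
and squaring and integrating against $\nu$ gives $W_2^2(\nu,\mathbb{P}) \leq 2\,H(\nu|\mathbb{P})$, which is the desired transportation inequality.

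The main technical obstacle is justifying the application of Girsanov's theorem without any a priori integrability assumption on $h$ beyond finite entropy: in particular Novikov's or Kazamaki's conditions need not hold. The standard remedy is to first establish the inequality on a dense subclass (for instance measures $\nu$ whose density is bounded, for which Girsanov applies without fuss) and then extend to arbitrary $\nu$ with finite entropy by a lower semicontinuity argument, invoking the lower semicontinuity of both $H(\cdot|\mathbb{P})$ and $W_2(\cdot,\mathbb{P})$. An alternative route, bypassing stochastic calculus entirely, is to quote the general fact that a centered Gaussian measure on a separable Banach space satisfies a $\bt_2(2\sigma^2)$ inequality, where $\sigma^2$ is its weak variance; for the Wiener measure on $(C([0,1],\R), \|\cdot\|_\infty)$ the weak variance is bounded by $\sup_{s,t \in [0,1]} s \wedge t = 1$, and the conclusion follows with substantial slack in the constant.
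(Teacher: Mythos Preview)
Your proposal is correct, and both of your routes deliver the sharper $\bt_2(2)$. The paper takes your ``alternative route'': it quotes the general fact that a centered Gaussian measure satisfies $\bt_2(2\sigma^2)$ and then bounds the weak variance. The difference is in how $\sigma^2$ is estimated. You compute it directly: the dual of $(C([0,1]),\|\cdot\|_\infty)$ is the space of signed Radon measures with total variation norm, so $\sigma^2 = \sup_{|\mu|_{TV}\leq 1} \iint (s\wedge t)\,d\mu(s)\,d\mu(t) = 1$, attained at $\mu=\delta_1$. The paper instead bounds $\sigma^2$ by the strong variance $s^2 = \mathbb{E}(\sup_t |B_t|)^2$ and then uses $\sup_t |B_t| \leq \sup_t B_t - \inf_t B_t$ together with the reflection principle to get $s^2 \leq 4$, which is where the $8$ comes from. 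Your Girsanov argument is a genuinely different, constructive proof: it exhibits the optimal coupling and does not rely on the abstract Gaussian $\bt_2$ result, at the cost of the approximation step you flag. Either way you end up with a constant four times better than the paper's.
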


\begin{proof}
The Wiener measure satisfies the $\bt_2(2 \sigma^2)$ inequality,
where 

\begin{equation*}
 \sigma^2 = \sup_{\mu} \mathbb{E} ( \int_0^1 B_s d \mu(s))^2
\end{equation*}

and the supremum runs over all Radon measures on $[0, 1]$ with total variation bounded by $1$.
Note that the weak variance $\sigma^2$ is bounded by the variance $s^2$ defined as
$s^2 = \mathbb{E} (\sup_t |B_t|) ^2$
 (here
and hereafter $\sup_t |B_t|$ refers to the supremum on $[0, 1]$).
In turn we can give a (quite crude) bound on $s$ :
write $\sup_t |B_t| \leq \sup_t B_t - \inf_t B_t$, thus
$(\sup_t |B_t|)^2 \leq ( \sup_t B_t - \inf_t B_t)^2 \leq 2 (\sup_t B_t)^2 + 2 (- \inf_t B_t)^2$.
Remember the well-known fact that $\sup_t B_t$, $- \inf_t B_t$ and $|B_1|$ have the same law, so that

\begin{equation*}
 \mathbb{E}( \sup_t |B_t|)^2 \leq 4 \mathbb{E} |B_1|^2 = 4.
\end{equation*}

\end{proof}

\begin{lemma} \label{lemma_small_exp_moment_wiener}
 Let $\gamma$ denote the Wiener measure. For $a = \sqrt{2 \log 2}/3$, we have
\begin{equation*}
\int e^{a\|x\|_\infty} \gamma(dx) \leq 2
\end{equation*}.
\end{lemma}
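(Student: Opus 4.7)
The plan is to combine the sharp Gaussian concentration of measure on $C([0,1],\R)$ for the $1$-Lipschitz observable $x \mapsto \|x\|_\infty$ with a direct second-moment bound on $\|B\|_\infty$. First I observe that the weak variance of $\gamma$ in the sense of Example~\ref{theorem_gaussian_vector} is exactly $\sigma^2 = 1$: for any signed Radon measure $\mu$ on $[0,1]$ with total variation at most $1$, Fubini and $\mathbb{E}[B_s B_t] = \min(s,t)$ give
$$\mathbb{E}\Bigl[\Bigl(\int_0^1 B_s\,d\mu(s)\Bigr)^2\Bigr] \;=\; \int_0^1 \mu([u,1])^2\,du \;\leq\; |\mu|([0,1])^2 \;\leq\; 1,$$
with equality at $\mu = \delta_1$. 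Since $\gamma$ is a centered Gaussian measure on a separable Banach space, it satisfies a $\bt_2(2\sigma^2) = \bt_2(2)$ inequality (as used in Example~\ref{theorem_gaussian_vector}), hence $\bt_1(2)$, so by Bobkov-Götze (Proposition~\ref{bobkov_gotze_characterization}), writing $m = \int \|x\|_\infty\,\gamma(dx)$, one has for every $\lambda > 0$
$$\int e^{\lambda(\|x\|_\infty - m)}\,\gamma(dx) \;\leq\; e^{\lambda^2/2}.$$

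Next I bound $m$. From the elementary inequality $\max(|a|,|b|)^2 \leq a^2 + b^2$ applied pointwise to $a = \sup_t B_t \geq 0$ and $b = -\inf_t B_t \geq 0$ (both nonnegative since $B_0 = 0$), I get $\|B\|_\infty^2 \leq (\sup_t B_t)^2 + (-\inf_t B_t)^2$. By the reflection principle, $\sup_t B_t$ and $-\inf_t B_t$ both have the law of $|Z|$ with $Z \sim \mathcal{N}(0,1)$, so $\mathbb{E}\|B\|_\infty^2 \leq 2$, and hence $m \leq \sqrt{2}$ by Jensen.

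Plugging $\lambda = a = \sqrt{2\log 2}/3$ into the Bobkov-Götze estimate together with $m \leq \sqrt 2$ yields
$$\int e^{a\|x\|_\infty}\,\gamma(dx) \;\leq\; e^{a\sqrt 2 + a^2/2} \;=\; \exp\Bigl(\tfrac{2\sqrt{\log 2}}{3} + \tfrac{\log 2}{9}\Bigr),$$
and the inequality $\tfrac{2\sqrt{\log 2}}{3} + \tfrac{\log 2}{9} \leq \log 2$ reduces, after clearing denominators, to $9 \leq 16 \log 2$, which holds (since $16\log 2 \approx 11.09$). This proves $\int e^{a\|x\|_\infty}\,\gamma(dx) \leq 2$.

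The main subtlety is that one must retain sharp constants in \emph{both} inputs simultaneously. If one were to use the crude $\bt_1(8)$ inequality from the previous lemma, the exponent would worsen from $a^2/2$ to $2a^2$, and combined with any bound on $m$ no better than $\sqrt 2$ one obtains $a\sqrt 2 + 2a^2 \approx 0.86 > \log 2$, insufficient to conclude. It is exactly the combination of the exact weak variance $\sigma^2 = 1$ and the refinement $\mathbb{E}\|B\|_\infty^2 \leq 2$ coming from the max-of-squares trick (instead of the factor $4$ one would get from Doob's $L^2$ inequality) that makes the particular choice $a = \sqrt{2\log 2}/3$ work.
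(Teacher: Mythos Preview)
Your proof is correct and takes a genuinely different route from the paper's. The paper argues directly via the hitting time $\tau_u = \inf\{t : |B_t| = u\}$: it rewrites $\pr(\|B\|_\infty \geq u) = \pr(\tau_u \leq 1)$, uses the explicit Laplace transform $\mathbb{E}\,e^{-\lambda^2 \tau_u/2} = 1/\cosh(\lambda u) \leq 2 e^{-\lambda u}$, and integrates against $a e^{au}\,du$ to obtain $\int e^{a\|x\|_\infty}\,\gamma(dx) \leq 2a e^{\lambda^2/2}/(\lambda - a)$; the choice $\lambda = 3a$ then gives $e^{9a^2/2} = 2$ exactly. Your argument instead feeds the sharp weak variance $\sigma^2 = 1$ into the $\bt_2(2\sigma^2)$ inequality for Gaussian measures and combines Bobkov--G\"otze with the second-moment bound $\mathbb{E}\|B\|_\infty^2 \leq 2$. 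The paper's approach is more self-contained (it does not invoke the general $\bt_2$ result for Gaussian Banach spaces, which is quoted from a companion paper), and its choice $\lambda = 3a$ lands on $\log 2$ on the nose rather than with slack. Your approach, on the other hand, is more in the spirit of the surrounding material, and along the way you sharpen the paper's own estimate $\sigma^2 \leq s^2 \leq 4$ from the preceding lemma to the exact value $\sigma^2 = 1$ (and $s^2 \leq 2$); this is what makes the numerics close. Both are valid.
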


\begin{proof}
 
We have

\begin{align*}
 \int e^{a\|x\|_\infty} \gamma(dx) & = \int_0^{ + \infty} \pr ( e^{a \|x\|_\infty} \geq t) d t \\
{} & = \int_0^{+ \infty} \pr(  \|x\|_\infty \geq u) a e^{a u} du \\
{} & =  \int_0^{+ \infty} \pr(  \tau_u \leq 1) a e^{a u} du
\end{align*}

where $\tau_u$ is the stopping time $\inf \{t, |B_t| = u \}$. It is a simple exercise to compute

\begin{equation*}
 \mathbb{E} e^{- \lambda^2 \tau_u / 2} = 1/ \cosh (\lambda u) \leq 2 e^{- \lambda u}.
\end{equation*}

This yields

\begin{equation*}
  \int e^{a\|x\|_\infty} \gamma(dx) \leq 2 a e^{\lambda^2/2}\int_0^{+ \infty} e^{(a - \lambda) u} du = \frac{2 a e^{ \lambda^2/2}}{\lambda - a}.
\end{equation*}

We can choose $\lambda = 3 a$ to get  $\int e^{a\|x\|_\infty} \gamma(dx) \leq e^{9a^2/2}$. In turn it implies the desired result.

\end{proof}

\bibliography{exact_deviations}{}
\bibliographystyle{plain}

\end{document}